\newcommand{\XH}[1]{\textcolor{black}{#1}}
  \newcounter{mnote}
  \let\oldmarginpar\marginpar
    \renewcommand\marginpar[1]{\-\oldmarginpar[\raggedleft\footnotesize #1]%
    {\raggedright\footnotesize #1}}
\newtheorem{theorem}{Theorem}[section]
\newtheorem{lemma}[theorem]{Lemma}
\newtheorem{remark}[theorem]{Remark}
\newcommand{\bs}{\boldsymbol}
\renewcommand{\div}{\operatorname{div}}
\numberwithin{equation}{section}
\begin{document}
\title[$H^m$-nonconforming VEM with $m>n$]{Nonconforming Virtual Element Method for $2m$-th Order Partial Differential Equations in $\mathbb R^n$ with $m>n$}
\author{Xuehai Huang}%
\address{School of Mathematics, Shanghai University of Finance and Economics, Shanghai 200433, China}%
\email{huang.xuehai@sufe.edu.cn}%

\thanks{The author was supported by the National Natural Science Foundation of China Project 11771338, and the Fundamental Research Funds for the Central
Universities 2019110066.}

\subjclass[2010]{
65N30;   
65N12;   
65N22;   
}

\begin{abstract}
The $H^m$-nonconforming virtual elements of any order $k$ on any shape of polytope in $\mathbb R^n$ with constraints $m> n$ and $k\geq m$ are constructed in a universal way.
A generalized Green's identity for $H^m$ inner product with $m>n$ is derived, which is essential to devise the $H^m$-nonconforming virtual elements.
By means of the local $H^m$ projection and a stabilization term using only the boundary degrees of freedom, the $H^m$-nonconforming virtual element methods are proposed to approximate solutions of the $m$-harmonic equation.
The norm equivalence of the stabilization on the kernel of the local $H^m$ projection is proved by using the bubble function technique, the Poincar\'e inquality and the trace inequality, which implies the well-posedness of the virtual element methods.
The optimal error estimates for the $H^m$-nonconforming virtual element methods are achieved from an estimate of the weak continuity and the
error estimate of the canonical interpolation. Finally, the implementation of the nonconforming virtual element method is discussed.
\end{abstract}
\maketitle


\section{Introduction}

The $H^m$-nonconforming virtual elements of order $k\in\mathbb N$ on a very general polytope $K\subset\mathbb R^n$ in any dimension and any order with constraints $m\leq n$ and $k\geq m$ have been devised in \cite{ChenHuang2019}.
\XH{While an important case $m=3$ and $n=2$, i.e. the triharmonic equation in two dimensions is not involved in \cite{ChenHuang2019}.}
To this end, and also
for theoretical considerations, we will study the $H^m$-nonconforming virtual element $(K, \mathcal N_K, V_K)$ for case $m>n$ in this paper, which can be considered as the second part of the work \cite{ChenHuang2019}.
Here $\mathcal N_K$ is the set of degrees of freedom, and $V_K$ the finite-dimensional space of shape functions.
The virtual element can be defined on polytopes of any shape, and thus allows the division of the domain into different type of polytopes \XH{\cite{BeiraodaVeigaBrezziCangianiManziniEtAl2013,BeiraodaVeigaBrezziMariniRusso2014}}, which makes the discrete method easier to
capture the singularity of the solution.
The key feature of the virtual element method is that it is completely determined by the degrees of freedom, and the virtual element space is only used for the analysis rather than entering the discrete method for elliptic problems.

It is arduous to design $H^m$-conforming or nonconforming finite elements for large $k$ and $m$, especially $m>n$.
With the help of the bubble functions,
Wang and Xu constructed the minimal $H^m$-nonconforming finite elements on simplices in any dimension with $m=n+1$ in~\cite{WuXu2019}.
Enriching the bilinear form with few interior penalty terms, they proposed a family of interior penalty nonconforming finite element
methods for arbitrary $m$ and $n$ in~\cite{WuXu2017}, whose shape functions are polynomials of the smallest degree $m$.
In two and three dimensions,  $H^m$-conforming finite elements for arbitrary $m$ with shape functions being lower order polynomials were devised in \cite{BrambleZlamal1970,Zenisek1970} for triangular meshes, in \cite{Zenisek1974a} for tetrahedral meshes and in \cite{HuZhang2015a} for rectangular meshes. And some $H^m$-nonconforming elements of lower degree on triangular meshes  for any $m$ were studied in~\cite{HuZhang2017}.
In addition to standard conforming and nonconforming finite element methods, a $C^0$ interior penalty method in \cite{GudiNeilan2011} and a cubic $H^3$-nonconforming macro-element method in \cite{HuZhang2019} were developed  for a sixth-order elliptic equation in two dimensions, and some mixed finite element methods were advanced in \cite{DroniouIlyasLamichhaneWheeler2019,Gallistl2017,Schedensack2016} for $2m$th-order elliptic equations with $m>n$.
When $m\leq n$,
We refer to \cite{ArgyrisFriedScharpf1968,Zhang2009a,Zhang2010,HuHuangZhang2011} for $H^m$-conforming finite elements, and
\cite{WangXu2013,WangXu2006} for $H^m$-nonconforming finite elements.

Constructing the $H^m$-conforming or nonconforming finite elements on general polytopes for arbitrary $k$, $m$ and $n$ in a universal way is extremely difficult, while it is possible for the virtual elements. As a matter of fact, $H^m$-conforming virtual elements in two dimensions with arbitrary $m$ were designed in \XH{\cite{BeiraoManzini2014,AntoniettiManziniVerani2019,BrezziMarini2013}}, which were nontrivial to extend to higher dimension $n>2$. \XH{ We refer to \cite{BeiraodaVeigaDassiRusso2019} for the $H^2$-conforming virtual elements in three dimensions.} While in \cite{AyusodeDiosLipnikovManzini2016,ChenHuang2019,ZhaoChenZhang2016,AntoniettiManziniVerani2018,ZhaoZhangChenMao2018},
$H^m$-nonconforming virtual elements on general polytopes in any dimension $n$ with constraint $m\leq n$ were studied in details.
By the way, we refer to~\cite{Wang2019} for an $H^1$-nonconforming Crouzeix-Raviart type element on polygonal meshes.

In order to construct the $H^m$-nonconforming virtual element in any order on the polytope with any shape in any dimension for $m>n$, by adopting the integration by parts, we first prove the following generalized Green's identity for the $H^m$ space
\begin{align}
(\nabla^mu, \nabla^mv)_K =& \left((-\Delta)^mu,v\right)_K+ \sum_{j=1}^{n-1}\sum_{F\in\mathcal F^j(K)}\sum_{\alpha\in A_{j}\atop|\alpha|\leq m-j}\Big ( D^{2m-j-|\alpha|}_{F, \alpha}(u), \;\frac{\partial^{|\alpha|}v}{\partial\nu_{F}^{\alpha}}\Big )_F  \notag\\
 &+ \sum_{\delta\in\mathcal F^{n}(K)}\sum_{\alpha\in A_{n}\atop|\alpha|\leq m-n}D^{2m-n-|\alpha|}_{\delta,\alpha}(u)\frac{\partial^{|\alpha|}v}{\partial\nu_{\delta}^{\alpha}}(\delta),\label{eq:HmGreenIntro}
\end{align}
where $\mathcal F^j(K)$ is the set of all $(n-j)$-dimensional faces
of the polytope $K$, $A_{j}$ the set consisting of all $n$-dimensional multi-indexes $\alpha=(\alpha_1, \cdots, \alpha_n)$ with $\alpha_{j+1}=\cdots=\alpha_n=0$,  $D^{2m-j-|\alpha|}_{F, \alpha}(u)$ some $(2m-j-|\alpha|)$-th order derivative of $u$ on $F$, and $\frac{\partial^{|\alpha|}v}{\partial\nu_{F}^{\alpha}}$ the multi-indexed normal derivative on $F$. \XH{Compared to the generalized Green's identity for $m\leq n$ in \cite{ChenHuang2019}, the generalized Green's identity \eqref{eq:HmGreenIntro} involves the additional term $D^{2m-n-|\alpha|}_{\delta,\alpha}(u)\frac{\partial^{|\alpha|}v}{\partial\nu_{\delta}^{\alpha}}(\delta)$ for $|\alpha|>0$.}
Completely based on the terms in the right hand side of the generalized Green's identity~\eqref{eq:HmGreenIntro}, we define the fully $H^m$-nonconforming virtual element $(K, \mathcal N_k(K), V_k(K))$ for $m>n$.
When $K$ is a simplex and $k=m=n+1$, the degrees of freedom $\mathcal N_k(K)$ are same as those of the nonconforming finite element in \cite{WuXu2019}.
When $K$ is a simplex and $k=m>n=2$, we also recover the the degrees of freedom mentioned in \cite[page~268]{HuZhang2017}.

The serendipity approach in \cite{BeiraodaVeigaBrezziMariniRusso2016c,Russo2016,BeiraoDaVeigaBrezziDassiMariniEtAl2018} is employed to reduce the dimension of the virtual element $(K, \mathcal N_k(K), V_k(K))$ to achieve the serendipity virtual element $(K, \mathcal N_k^s(K), V_k^s(K))$.
\XH{If we choose the degrees of freedom $\mathcal N_k^s(K)$ carefully, the virtual element $(K, \mathcal N_k^s(K), V_k^s(K))$ may reduce to an $H^m$-nonconforming finite element, whose shape functions are polynomials of degree no more than some nonnegative $k_s\leq k$.
To be specific, when $n=2$, $m=3$, $k=5$, $k_s=4$, $K$ is a triangle and choose a reduced degrees of freedom, the serendipity virtual element $(K, \mathcal N_k^s(K), V_k^s(K))$ is reduced to an $H^3$-nonconforming finite element in \cite{HuZhang2017}.
As a result, the techniques in assembling the stiffness matrix of the virtual element methods can be used to assemble the stiffness matrix of the finite element methods in this case. More importantly, this hints a way to recover some existing finite elements and construct new $H^m$-nonconforming finite elements, nevertheless it is not easy to verify the assumption in the serendipity.}

\XH{The local $H^m$ projection $\Pi^K$ is defined in view of the local $m$-harmonic problem.
Adopting the Taylor's theorem, we prove the inverse inequality of polynomials on the general polytope $K$ by only assuming $K$ is star-shaped,
while $K$ admitting a virtual quasi-uniform triangulation is assumed in~\cite{ChenHuang2018}.
According to this inverse inequality, the operator $(-\Delta)^m: \mathbb P_{k}(K)\to\mathbb P_{k-2m}(K)$ is shown to be onto and have a continuous right inverse.
Based on the fact that the operator $(-\Delta)^m: \mathbb P_{k}(K)\to\mathbb P_{k-2m}(K)$ is onto, we propose a stabilization term using only the boundary degrees of freedom,
whereas all the degrees of freedom are involved in the stabilization term in \cite{ChenHuang2019}.
After introducing the discrete right hand side term, we
design the $H^m$-nonconforming virtual element methods to approximate solutions of the $m$-harmonic equation.
For the case $2m \leq k \leq 3m - 2$, we define the right hand side term as $(f, Q_h^{m-1}\Pi_h v_h+Q_h^{k-2m}(v_h-\Pi_h v_h))$, rather than $(f, Q_h^{m-1}v_h)$ in \cite{ChenHuang2019}. The gain of this new right hand side term is that we do not need to modify the virtual element space $V_k(K)$ for $2m \leq k \leq 3m - 2$, whereas the modification of $V_k(K)$ is required in \cite{ChenHuang2019}.}

We analyze the $H^m$-nonconforming virtual element methods under the assumptions that each element in the mesh $\mathcal T_h$ is star-shaped, and $\mathcal T_h$ admits a virtual quasi-uniform triangulation.
Applying the bubble function technique, the generalized Green's identity, the Poincar\'e inquality and the trace inequality, the norm equivalence of the stabilization on $\ker(\Pi^K)\cap V_k(K)$ is derived.
As in \cite{ChenHuang2019}, after obtaining a bound on the jump $\llbracket\nabla_h^sv_h\rrbracket$ and the
error estimate of the canonical interpolation,  we achieve
the optimal error estimates of the $H^m$-nonconforming virtual element methods.
We also consider the implementation of the virtual element method.

The rest of this paper is organized as follows. Some notations and mesh conditions are shown in Section 2.
The generalized Green's identity and the fully $H^m$-nonconforming virtual element are presented in Section 3. In Section 4, we propose the $H^m$-nonconforming virtual element method, and prove the norm equivalence and the weak continuity.
We develop the optimal error analysis for the $H^m$-nonconforming virtual element methods in Section 5.
Finally we discuss the implementation of the virtual element method is Section 6.

\section{Preliminaries}

\subsection{Notations}
In this paper we will adopt the same notations as in \cite{ChenHuang2019}.
Let $\Omega\subset \mathbb{R}^n~(n\geq 1)$ be a bounded
polytope.
For any nonnegative integer $r$ and $1\leq\ell\leq n$, notation $\mathbb T_{\ell}(r):=(\mathbb R^{\ell})^r=\prod_{j=1}^{r}\mathbb R^{\ell}$ stands for the set of $r$-tensor spaces over $\mathbb R^{\ell}$.
Given a bounded domain $G\subset\mathbb{R}^{n}$ with $n\in\mathbb N$ and a
non-negative integer $k$, let $H^k(G; \mathbb T_{\ell}(r))$ be the usual Sobolev space of functions
over $G$ taking values in the tensor space $\mathbb T_{\ell}(r)$, whose norm and semi-norm are denoted by
$\Vert\cdot\Vert_{k,G}$ and $|\cdot|_{k,G}$ respectively. Set $H^k(G):=H^k(G; \mathbb T_{\ell}(0))$.
Define $H_0^k(G)$ as the closure of $C_{0}^{\infty}(G)$ with
respect to the norm $\Vert\cdot\Vert_{k,G}$, and define $H_0^1(G; \mathbb T_{\ell}(r))$ in a similar way.
Let $(\cdot, \cdot)_G$ be the standard inner product on $L^2(G; \mathbb T_{\ell}(r))$. If $G$ is $\Omega$, we abbreviate
$\Vert\cdot\Vert_{k,G}$, $|\cdot|_{k,G}$ and $(\cdot, \cdot)_G$ by $\Vert\cdot\Vert_{k}$, $|\cdot|_{k}$ and $(\cdot, \cdot)$,
respectively.
Let $\mathbb P_k(G)$ be the set of all
polynomials over $G$ with the total degree no more than $k$, whose tensorial version space is denoted by $\mathbb P_{k}(G; \mathbb T_{\ell}(r))$. Let $\mathbb P_k(G):=\{0\}$ if $k<0$.
Let $Q_k^{G}$ be the $L^2$-orthogonal projection onto $\mathbb P_k(G; \mathbb T_{\ell}(r))$.
For any $F\subset\partial G$,
denote by $\nu_{G, F}$ the
unit outward normal to $\partial G$. Without causing any confusion, we will abbreviate $\nu_{G, F}$ as $\nu$ for simplicity.

For an $n$ dimensional multi-index $\alpha = (\alpha_1, \cdots , \alpha_n)$ with $\alpha_i\in\mathbb Z^+\cup\{0\}$, define $|\alpha|:=\sum_{i=1}^n\alpha_i$.
For $0\leq j \leq n$, let $A_{j}$ be the set consisting of all multi-indexes $\alpha$ with $\sum_{i=j+1}^n\alpha_i=0$, i.e., non-zero index only exists for $1\leq i\leq j$.
For any non-negative integer $\ell$, define the scaled monomial $\mathbb M_{\ell}(G)$ on a $j$-dimensional domain $G$
\[
\mathbb M_{\ell}(G):=\left\{\Big (\frac{\bs x-\bs x_G}{h_G}\Big )^{\alpha}, \alpha\in A_{j}, |\alpha|\leq \ell\right\},
\]
where $h_G$ is the diameter of $G$ and $\bs x_G$ is the centroid of $G$. And $\mathbb M_{\ell}(G):=\emptyset$ if $\ell<0$.
For ease of presentation, let $N_{G,\ell}:=\#\mathbb M_{\ell}(G)$, and all the functions in $\mathbb M_{\ell}(G)$ be \XH{$\{\textsf{m}_{G,i}\}_{i=1}^{N_{G,\ell}}$}.

Given $r$-tensors $\tau, \varsigma\in \mathbb T_{\ell}(r)$ and a vector $v\in\mathbb R^{\ell}$, define the scalar product $\tau:\varsigma\in \mathbb R$  and the dot product
$\tau\cdot v\in \mathbb T_{\ell}(r-1)$ by (cf. \cite{Schedensack2016})
\[
\tau:\varsigma:=\sum_{(j_1,\cdots,j_{r})\in\{1,\cdots,\ell\}^{r}}\tau_{j_1,\cdots, j_{r}}\varsigma_{j_1,\cdots,j_{r}},
\]
\[
(\tau\cdot v)_{j_1,\cdots, j_{r-1}}:=\sum_{i=1}^{\ell}\tau_{j_1,\cdots, j_{r-1},i}v_i\quad\forall~(j_1,\cdots, j_{r-1})\in\{1,\cdots,\ell\}^{r-1},
\]
which will be abbreviated as $\tau v$.

Let $\{\mathcal {T}_h\}$ be a family of partitions
of $\Omega$ into nonoverlapping simple polytopal elements with $h:=\max_{K\in \mathcal {T}_h}h_K$.
Let $\mathcal{F}_h^r$ be the set of all $(n-r)$-dimensional faces
of the partition $\mathcal {T}_h$ for $r=1, 2, \cdots, n$, and its boundary part
\[
\mathcal{F}_h^{r, \partial}:=\{F\in\mathcal{F}_h^r: F\subset\partial\Omega\},
\]
and interior part $\mathcal{F}_h^{r, i}:=\mathcal{F}_h^{r}\backslash \mathcal{F}_h^{r, \partial}$.
For simplicity, let $\mathcal{F}_h^0:=\mathcal {T}_h$.
Moreover, we set for each $K\in\mathcal{T}_h$
\[
\mathcal{F}^r(K):=\{F\in\mathcal{F}_h^r: F\subset\partial K\}.
\]
The supscript $r$ in $\mathcal{F}_h^r$ represents the co-dimension of an $(n-r)$-dimensional face $F$. 
Similarly, we define
\[
\mathcal F^j(F):=\{e\in\mathcal{F}_h^{r+j}: e\subset\overline{F}\}.
\]
Here $j$ is the co-dimension relative to the face $F$. 
For any $(n-2)$-dimensional face $e\in\mathcal{F}_h^{2}$,  denote
\[
\partial^{-1}e :=\{F\in\mathcal{F}_h^{1}: e\subset \partial F\}.
\]
Similarly for any $(n-1)$-dimensional face $F\in\mathcal{F}_h^{1}$,  let
\[
\partial^{-1}F :=\{K\in\mathcal{T}_h: F\in \mathcal{F}^1(K)\}.
\]

For any $F\in\mathcal{F}_h^r$ with $1\leq r\leq n-1$, let $\nu_{F,1}, \cdots, \nu_{F,r}$ be its
mutually perpendicular unit normal vectors, and define the surface gradient on $F$ as
\begin{equation}\label{eq:surfacegrad}
\nabla_{F}v:=\nabla v-\sum_{i=1}^r\frac{\partial v}{\partial\nu_{F,i}}\nu_{F,i},
\end{equation}
namely the projection of $\nabla v$ to the face $F$, which is independent of the choice of the normal vectors. And denote by $\div_{F}$ the corresponding surface divergence.
For any $\delta\in\mathcal{F}_h^n$ and $i=1,\cdots, n$, let $\nu_{\delta,i}:=(0,\cdots,0,1,0,\cdots,0)^{\intercal}$ be the $n$-tuple with all components equal to $0$, except the $i$th, which is $1$.
For any $F\in\mathcal F^{r}_h$ and $\alpha\in A_{r}$ for $r=1,\cdots, n$, set
\[
\frac{\partial^{|\alpha|}v}{\partial\nu_{F}^{\alpha}}:=\frac{\partial^{|\alpha|}v}{\partial\nu_{F, 1}^{\alpha_1}\cdots\partial\nu_{F, r}^{\alpha_{r}}}.
\]

For non-negative integers $m$ and $k$, let
\[
H^m(\mathcal T_h):=\{v\in L^2(\Omega): v|_K\in H^m(K)\textrm{ for each } K\in\mathcal T_h\},
\]
\[
\mathbb P_k(\mathcal T_h):=\{v\in L^2(\Omega): v|_K\in \mathbb P_k(K)\textrm{ for each } K\in\mathcal T_h\}.
\]
For a function $v\in H^m(\mathcal T_h)$, equip the usual broken $H^m$-type norm and semi-norm
\[
\|v\|_{m,h}:=\Big (\sum_{K\in\mathcal T_h}\|v\|_{m,K}^2\Big )^{1/2},\quad |v|_{m,h}:=\Big (\sum_{K\in\mathcal T_h}|v|_{m,K}^2\Big )^{1/2}.
\]
For any $K\in\mathcal T_h$, $\delta\in \mathcal F^n(K)$, and any function $v$ defined on $K$,
we will rewrite $v(\bs x_{\delta})$ as $v(\delta)$ for simplicity.

We introduce jumps on ($n-1$)-dimensional faces.
Consider two adjacent elements $K^+$ and $K^-$ sharing an interior ($n-1$)-dimensional face $F$.
Denote by $\nu^+$ and $\nu^-$ the unit outward normals
to the common face $F$ of the elements $K^+$ and $K^-$, respectively.
For a scalar-valued or tensor-valued function $v$, write $v^+:=v|_{K^+}$ and $v^-
:=v|_{K^-}$.   Then define the jump on $F$ as
follows:
\[
\llbracket v\rrbracket:=v^+\nu_{F,1}\cdot \nu^++v^-\nu_{F,1}\cdot \nu^-.
\]
On a face $F$ lying on the boundary $\partial\Omega$, the above term is
defined by
\[
\llbracket v\rrbracket
   :=v\nu_{F,1}\cdot \nu.
\]

\subsection{Mesh conditions}
We impose the following conditions on the mesh $\mathcal T_h$.
\begin{itemize}
 \item[(A1)] Each element $K\in \mathcal T_h$ and each face $F\in \mathcal F_h^r$ for $1\leq r\leq n-1$ is star-shaped with a uniformly bounded chunkiness parameter.

 \item[(A2)] There exists a quasi-uniform simplicial mesh $\mathcal T_h^*$ such that each $K\in \mathcal T_h$ is a union of some simplexes in $\mathcal T_h^*$.
\end{itemize}

Throughout this paper, we also use
``$\lesssim\cdots $" to mean that ``$\leq C\cdots$", where
$C$ is a generic positive constant independent of mesh size $h$, but may depend on the chunkiness parameter of the polytope, the degree of polynomials $k$, the order of differentiation $m$, the dimension of space $n$, and the shape regularity and quasi-uniform constants of the virtual triangulation $\mathcal T^*_h$,
which may take different values at different appearances. And $A\eqsim B$ means $A\lesssim B$ and $B\lesssim A$.
Hereafter, we always assume $k\geq m$.

Note that (A1) and (A2) imply ${\rm diam}(F) \eqsim {\rm diam} (K)$ for all $F\in \mathcal F^r(K), 1\leq r\leq n-1$.
For a star-shaped domain $D$, the following trace inequality of $H^1(D)$ holds \cite[(2.18)]{BrennerSung2018}
\begin{equation}\label{L2trace}
\|v\|_{0,\partial D}^2 \lesssim h_D^{-1}\|v\|_{0,D}^2 + h_D |v|_{1,D}^2 \quad \forall~v\in H^1(D).
\end{equation}
When $D\subset\mathbb R$, the notation $\|v\|_{0,\partial D}$ means $\|v\|_{L^{\infty}(\partial D)}$.

\section{$H^m$-Nonconforming Virtual Element with $m>n$}

In this section, we will construct the $H^m$-nonconforming virtual element with integer $m>n\geq1$.
For any scalar or tensor-valued smooth function $v$, nonnegative integer $j$, $F\in \mathcal{F}_h^r$ with $1\leq r\leq n$, and $\alpha\in A_{r}$, we use $D^j_{F, \alpha}(v)$ to denote some $j$-th order derivative of $v$ restrict on $F$, which may take different expressions at different appearances.

For $n=1$ and any $e\in\mathcal T_h$, applying the integration by parts in one dimension, we have for any $u\in H^{2m}(e)$ and $v\in H^m(e)$
\[
\big(u^{(m)}, v^{(m)}\big)_e=(-1)^m\big(u^{(2m)}, v\big)_e + \sum_{\delta\in\mathcal F^1(e)}\sum_{i=0}^{m-1}(-1)^iu^{(2m-1-i)}(\delta)v^{(i)}(\delta)\nu_{e,\delta}.
\]
This is just the Green's identity in one dimension. Here $v^{(i)}$ means the $i$-th order derivative of $v$, and
\[
\nu_{e,\delta}=
\begin{cases}
1, & \textrm{ if } \delta \textrm{ is the right end point of } e, \\
-1, & \textrm{ if } \delta \textrm{ is the left end point of } e.
\end{cases}
\]

\subsection{Generalized Green's identity in two dimensions}

Then consider the generalized Green's identity in two dimensions in this subsection, i.e. $n=2$.

For each $e\in\mathcal{F}_h^1$, denote by $t_{e}$ the unit tangent vector, which will be also represented by $\nu_{e,2}$ for ease of presentation.
Let $\mathfrak{S}_{\ell}$ be the set of all permutations of $(1, 2, \cdots, \ell)$ for each positive integer $\ell$.
For $i=0, 1,\cdots, \ell$, define a set
\begin{align*}
\mathfrak{S}(\ell, i):=\{(j_1,\cdots,j_{\ell}):& \textrm{ there exists } \sigma\in\mathfrak{S}_{\ell} \textrm{ such that }\\ &\;j_{\sigma(1)}=\cdots=j_{\sigma(i)}=1,\;\;  j_{\sigma(i+1)}=\cdots=j_{\sigma(\ell)}=2\}.
\end{align*}
Apparently
\[
\mathfrak{S}(\ell, 0)=\{(2,\cdots, 2)\},\quad \mathfrak{S}(\ell, \ell)=\{(1,\cdots, 1)\}.
\]
For $0\leq i\leq\ell\leq r$, any $\tau\in\mathbb T_{2}(r)$ and $\sigma=(j_1,\cdots,j_{\ell})\in \mathfrak{S}(\ell, i)$, define $\tau\nu_e^{\sigma}\in \mathbb T_{2}(r-\ell)$ as
\[
\tau\nu_e^{\sigma}:=\tau\nu_{e,j_1}\cdots\nu_{e,j_{\ell}}.
\]
We also use $\tau\nu_e^{\sigma}$ to mean $\tau$ when $\ell=0$ for ease of presentation.

\begin{lemma}\label{lemma:ibpedge}
Let $K\in\mathcal T_h$, $e\in\mathcal{F}^1(K)$ and $s$ be a positive integer. It holds for any $\tau\in H^{s}(e;\mathbb T_2(s))$ and $(\nabla^sv)|_e\in L^2(e;\mathbb T_2(s))$
\begin{align*}
(\tau,\nabla^sv)_e=&\sum_{j=0}^{s}\sum_{\sigma\in\mathfrak{S}(s, j)}(-1)^{s-j}\left(\frac{\partial^{s-j}(\tau\nu_e^{\sigma})}{\partial{t_e^{s-j}}}, \frac{\partial^{j}v}{\partial{\nu_{e,1}^{j}}}\right)_e \\
& + \sum_{j=0}^{s-1}\sum_{\ell=0}^{j}\sum_{\sigma\in\mathfrak{S}(s, \ell)}\sum_{\delta\in\partial e}\nu_{e,\delta}(-1)^{s-1-j}\frac{\partial^{s-1-j}(\tau\nu_e^{\sigma})}{\partial{t_e^{s-1-j}}}(\delta) \frac{\partial^{j}v}{\partial{t_e^{j-\ell}}\partial{\nu_{e,1}^{\ell}}}(\delta).
\end{align*}
\end{lemma}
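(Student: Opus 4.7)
The plan is to work in the orthonormal frame $(\nu_{e,1}, t_e)$ attached to the edge $e$, expand $\nabla^s v$ on $e$ in this frame, and then invoke one-dimensional integration by parts tangentially. Since the $(j_1,\dots,j_s)$-component of $\nabla^s v$ in this frame equals $\partial^{s}v/(\partial\nu_{e,j_1}\cdots\partial\nu_{e,j_s})$ and, by symmetry of $\nabla^s v$, depends only on how many of the $j_i$'s equal $1$, grouping the tuples by this count yields
\begin{equation*}
(\tau,\nabla^s v)_e \;=\; \sum_{j=0}^{s} \sum_{\sigma\in\mathfrak{S}(s,j)} \int_e (\tau\nu_e^{\sigma})\,\frac{\partial^{s}v}{\partial\nu_{e,1}^{j}\,\partial t_e^{s-j}}\,\mathrm ds,
\end{equation*}
where the scalar $\tau\nu_e^{\sigma}=\tau_{j_1\cdots j_s}$ is the full contraction of $\tau$ with the $s$ normals indexed by $\sigma$ (legitimate here because $\sigma$ has length $s=r$).

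For each fixed $j$ and $\sigma$, I would then apply $s-j$ one-dimensional integrations by parts along $e$ in order to transfer the tangential derivatives from $v$ onto $w:=\tau\nu_e^{\sigma}$. Setting $u:=\partial^{j}v/\partial\nu_{e,1}^{j}$ and letting primes denote derivatives along $t_e$, the standard iterated formula reads
\begin{equation*}
\int_e w\,u^{(s-j)}\,\mathrm ds \;=\; (-1)^{s-j}\int_e w^{(s-j)}\,u\,\mathrm ds \;+\; \sum_{p=0}^{s-j-1}(-1)^{p}\sum_{\delta\in\partial e}\nu_{e,\delta}\,w^{(p)}(\delta)\,u^{(s-j-1-p)}(\delta).
\end{equation*}
Summing the bulk contributions over $j\in\{0,\dots,s\}$ and $\sigma\in\mathfrak{S}(s,j)$ reproduces verbatim the first sum on the right-hand side of the lemma.

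It remains to recast the boundary contributions in the claimed form. Substituting $q:=s-j-1-p$ turns $u^{(s-j-1-p)}$ into $\partial^{j+q}v/(\partial t_e^{q}\partial\nu_{e,1}^{j})$; relabeling $\ell:=j$ (number of normal derivatives on $v$) and $j':=j+q$ (total derivative order on $v$) converts the admissible range $0\le j\le s-1$, $0\le p\le s-j-1$ into $0\le j'\le s-1$, $0\le\ell\le j'$; and the sign $(-1)^{p}=(-1)^{s-j-1-q}=(-1)^{s-1-j'-2\ell}=(-1)^{s-1-j'}$. After renaming $j'$ back to $j$ and interchanging the order of summation, the expression matches the second double sum in the statement.

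The only real obstacle is careful bookkeeping of indices, signs, and tensor contractions defining $\tau\nu_e^{\sigma}$; there is no deep analytic input beyond iterated one-dimensional integration by parts. The regularity hypothesis $\tau\in H^{s}(e;\mathbb T_{2}(s))$ supplies precisely the $s$ tangential derivatives of $w=\tau\nu_e^{\sigma}$ demanded by both the bulk and the boundary terms, and a standard density argument by smooth tensors makes the iterated IBP identity rigorous at the claimed regularity.
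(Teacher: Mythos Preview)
Your argument is correct and follows essentially the same route as the paper: expand $\nabla^s v$ in the frame $(\nu_{e,1},t_e)$, group by the number of normal indices, and integrate by parts tangentially. The only cosmetic difference is that you invoke the iterated one-dimensional IBP formula in one stroke and then re-index, whereas the paper peels off one tangential derivative at a time and argues recursively; your intermediate sign $(-1)^{s-1-j'-2\ell}$ contains a harmless slip (the $-2\ell$ is spurious but even, so the conclusion $(-1)^{s-1-j'}$ is unaffected).
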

\begin{proof}
It follows from the integration by parts
\begin{align*}
(\tau,\nabla^sv)_e=&\left(\tau,\left(\nu_{e,1}\frac{\partial}{\partial{\nu_{e,1}}} + t_e\frac{\partial}{\partial{t_e}}\right)^sv\right)_e = \sum_{\ell=0}^s\sum_{\sigma\in\mathfrak{S}(s, \ell)}\left(\tau\nu_e^{\sigma}, \frac{\partial^sv}{\partial{t_e^{s-\ell}}\partial{\nu_{e,1}^{\ell}}}\right)_e \\
= &\sum_{\sigma\in\mathfrak{S}(s, s)}\left(\tau\nu_e^{\sigma}, \frac{\partial^sv}{\partial{\nu_{e,1}^{s}}}\right)_e -\sum_{\ell=0}^{s-1}\sum_{\sigma\in\mathfrak{S}(s, \ell)}\left(\frac{\partial(\tau\nu_e^{\sigma})}{\partial{t_e}}, \frac{\partial^{s-1}v}{\partial{t_e^{s-1-\ell}}\partial{\nu_{e,1}^{\ell}}}\right)_e \\
& + \sum_{\ell=0}^{s-1}\sum_{\sigma\in\mathfrak{S}(s, \ell)}\sum_{\delta\in\partial e}\nu_{e,\delta}(\tau\nu_e^{\sigma})(\delta) \frac{\partial^{s-1}v}{\partial{t_e^{s-1-\ell}}\partial{\nu_{e,1}^{\ell}}}(\delta).  \end{align*}
Applying the integration by parts to the second term of the right hand side, we get
\begin{align*}
(\tau,\nabla^sv)_e=&\sum_{j=s-2}^{s}\sum_{\sigma\in\mathfrak{S}(s, j)}(-1)^{s-j}\left(\frac{\partial^{s-j}(\tau\nu_e^{\sigma})}{\partial{t_e^{s-j}}}, \frac{\partial^{j}v}{\partial{\nu_{e,1}^{j}}}\right)_e \\
&+\sum_{\ell=0}^{s-3}\sum_{\sigma\in\mathfrak{S}(s, \ell)}\left(\frac{\partial^2(\tau\nu_e^{\sigma})}{\partial{t_e^2}}, \frac{\partial^{s-2}v}{\partial{t_e^{s-2-\ell}}\partial{\nu_{e,1}^{\ell}}}\right)_e \\
& + \sum_{j=s-2}^{s-1}\sum_{\ell=0}^{j}\sum_{\sigma\in\mathfrak{S}(s, \ell)}\sum_{\delta\in\partial e}\nu_{e,\delta}(-1)^{s-1-j}\frac{\partial^{s-1-j}(\tau\nu_e^{\sigma})}{\partial{t_e^{s-1-j}}}(\delta) \frac{\partial^{j}v}{\partial{t_e^{j-\ell}}\partial{\nu_{e,1}^{\ell}}}(\delta).
\end{align*}
Along this way, we can finish the proof by applying the integration by parts recursively.
\end{proof}

\begin{lemma}
Let $K\in\mathcal T_h$ and integer $s\geq n=2$.
There exist differential operators $D^{s-1-|\alpha|}_{e, \alpha}$ for $e\in \mathcal F^1(K)$ and $\alpha\in A_{1}$ with $|\alpha|\leq s-1$, and $D^{s-2-|\alpha|}_{\delta, \alpha}$ for $\delta\in \mathcal F^2(K)$ and $\alpha\in A_{2}$ with $|\alpha|\leq s-2$ such that for any $\tau\in H^{s}(K;\mathbb T_2(s))$ and $v\in H^{s}(K)$, it holds
\begin{align}
(\tau, \nabla^sv)_K = &\left((-\div)^s\tau,v\right)_K+ \sum_{e\in\mathcal F^1(K)}\sum_{\alpha\in A_{1}\atop|\alpha|\leq s-1}\Big ( D^{s-1-|\alpha|}_{e, \alpha}(\tau), \;\frac{\partial^{|\alpha|}v}{\partial\nu_{e}^{\alpha}}\Big )_e \notag\\
 & + \sum_{\delta\in\mathcal F^2(K)}\sum_{\alpha\in A_{2}\atop|\alpha|\leq s-2}D^{s-2-|\alpha|}_{\delta, \alpha}(\tau)\frac{\partial^{|\alpha|}v}{\partial\nu_{\delta}^{\alpha}}(\delta). \label{eq:HsGreen2d}
\end{align}
\end{lemma}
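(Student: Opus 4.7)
The plan is to iterate the standard divergence theorem on $K$ to push all derivatives off $v$, and then invoke the preceding edge lemma to resolve the resulting boundary contributions into pure normal edge integrals plus vertex values.

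First, I would apply the divergence theorem once to $(\tau,\nabla^s v)_K$, obtaining
\[
(\tau,\nabla^s v)_K=-(\div\tau,\nabla^{s-1}v)_K+\sum_{e\in\mathcal F^1(K)}(\tau\nu_{e,1},\nabla^{s-1}v)_e.
\]
Iterating this identity $s$ times yields
\[
(\tau,\nabla^s v)_K=((-\div)^s\tau,v)_K+\sum_{j=0}^{s-1}(-1)^j\sum_{e\in\mathcal F^1(K)}\bigl((\div^j\tau)\nu_{e,1},\nabla^{s-1-j}v\bigr)_e,
\]
so the volume term $((-\div)^s\tau,v)_K$ is already in the desired form, and it remains to rewrite the edge integrals.

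Next, on each edge $e\in\mathcal F^1(K)$, I would apply Lemma \ref{lemma:ibpedge} (with $s$ replaced by $s-1-j$ and $\tau$ replaced by $(\div^j\tau)\nu_{e,1}\in H^{s-1-j}(e;\mathbb T_2(s-1-j))$) to the edge integral $\bigl((\div^j\tau)\nu_{e,1},\nabla^{s-1-j}v\bigr)_e$. The output of that lemma has two kinds of contributions: interior edge integrals where the surviving derivative on $v$ is a pure power $\partial^{\ell}v/\partial\nu_{e,1}^{\ell}$ (which is exactly $\partial^{|\alpha|}v/\partial\nu_e^\alpha$ with $\alpha=(\ell,0)\in A_1$), and boundary vertex terms at $\delta\in\partial e$ involving $\partial^{\,j}v/\partial t_e^{j-\ell}\partial\nu_{e,1}^{\ell}(\delta)$. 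Summing over $e$ and $j$, and grouping according to the total order $|\alpha|$ of the normal derivative of $v$ on $e$, I would define $D^{s-1-|\alpha|}_{e,\alpha}(\tau)$ as the resulting tangential-differential operator acting on (normal contractions of) $\div^j\tau$. Note that the value $|\alpha|\le s-1$ matches the range produced by the lemma.

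For the vertex contributions, at each $\delta\in\mathcal F^2(K)$ and each edge $e\ni\delta$ the lemma produces point-evaluations of mixed tangential-normal derivatives $\partial^{j}v/\partial t_e^{j-\ell}\partial\nu_{e,1}^{\ell}(\delta)$ with $0\le \ell\le j\le s-2$. At a vertex $\delta$, the pair $(t_e,\nu_{e,1})$ forms a basis of $\mathbb R^2$, so each such mixed derivative can be rewritten as a linear combination of $\partial^{|\alpha|}v/\partial\nu_\delta^\alpha(\delta)$ with $\alpha\in A_2$, $|\alpha|=j\le s-2$; the coefficients depend only on the geometry and get absorbed into the definition of $D^{s-2-|\alpha|}_{\delta,\alpha}(\tau)$, collected over the two edges at $\delta$. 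This produces exactly the claimed vertex sum.

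The main obstacle is the combinatorial bookkeeping: tracking how the many $(j,\ell,\sigma)$-indexed terms produced by repeatedly applying the divergence theorem in $K$ and Lemma \ref{lemma:ibpedge} on each edge reorganize themselves into a clean sum indexed by $|\alpha|$, and verifying that the orders $s-1-|\alpha|$ of $D^{s-1-|\alpha|}_{e,\alpha}(\tau)$ on edges and $s-2-|\alpha|$ of $D^{s-2-|\alpha|}_{\delta,\alpha}(\tau)$ at vertices come out correctly; the precise algebraic identity of these operators need not be recorded since only their existence is asserted.
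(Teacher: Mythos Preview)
Your proposal is correct and follows essentially the same approach as the paper: iterate the divergence theorem on $K$ to obtain \eqref{eq:20190217}, then apply Lemma~\ref{lemma:ibpedge} to each edge integral and regroup. The paper leaves the change of basis at vertices (from $(t_e,\nu_{e,1})$ to the standard frame $(\nu_{\delta,1},\nu_{\delta,2})$) implicit in the phrase ``This indicates \eqref{eq:HsGreen2d}'', whereas you make it explicit; one minor notational point is that the outward normal appearing in the iterated divergence theorem should be $\nu_{K,e}$ rather than $\nu_{e,1}$, though this affects only signs absorbed into the operators $D^{s-1-|\alpha|}_{e,\alpha}$.
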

\begin{proof}
Due to the integration by parts, we get
\begin{align}
(\tau,\nabla^sv)_K&=-(\div\tau,\nabla^{s-1}v)_K + \sum_{e\in\mathcal F^1(K)}(\tau \nu_{K,e},\nabla^{s-1}v)_{e} \notag\\
&=\left((-\div)^s\tau,v\right)_K + \sum_{e\in\mathcal F^1(K)}\sum_{i=0}^{s-1}(-1)^{s-1-i}\left((\div^{s-1-i}\tau)\nu_{K,e}, \nabla^{i}v\right)_{e}. \label{eq:20190217}
\end{align}
Then it follows from Lemma~\ref{lemma:ibpedge}
\[
\begin{aligned}
&(\tau,\nabla^sv)_K-\left((-\div)^s\tau,v\right)_K\\
=&\sum_{e\in\mathcal F^1(K)}\sum_{i=1}^{s-1}\sum_{j=0}^{i-1}\sum_{\ell=0}^{j}\sum_{\sigma\in\mathfrak{S}(i, \ell)}\sum_{\delta\in\partial e}D_{\delta,\sigma}^{s-2-j}(\tau) \frac{\partial^{j}v}{\partial{t_e^{j-\ell}}\partial{\nu_{e,1}^{\ell}}}(\delta) \\
&+ \sum_{e\in\mathcal F^1(K)}\sum_{i=0}^{s-1} \sum_{j=0}^{i}\sum_{\sigma\in\mathfrak{S}(i, j)}\Big(D_{e,\sigma}^{s-1-j}(\tau), \frac{\partial^{j}v}{\partial{\nu_{e,1}^{j}}}\Big)_e \\
=& \sum_{e\in\mathcal F^1(K)}\sum_{j=0}^{s-2}\sum_{i=j+1}^{s-1}\sum_{\ell=0}^{j}\sum_{\sigma\in\mathfrak{S}(i, \ell)}\sum_{\delta\in\partial e}D_{\delta,\sigma}^{s-2-j}(\tau) \frac{\partial^{j}v}{\partial{t_e^{j-\ell}}\partial{\nu_{e,1}^{\ell}}}(\delta) \\
&+ \sum_{e\in\mathcal F^1(K)} \sum_{j=0}^{s-1}\sum_{i=j}^{s-1}\sum_{\sigma\in\mathfrak{S}(i, j)}\Big(D_{e,\sigma}^{s-1-j}(\tau), \frac{\partial^{j}v}{\partial{\nu_{e,1}^{j}}}\Big)_e,
\end{aligned}
\]
where
\[
D_{\delta,\sigma}^{s-2-j}(\tau)=(-1)^{s-2-j}\frac{\partial^{i-1-j}((\div^{s-1-i}\tau)\nu_{K,e}\nu_e^{\sigma})}{\partial{t_e^{i-1-j}}}(\delta)\nu_{e,\delta},
\]
\[
D_{e,\sigma}^{s-1-j}(\tau)=(-1)^{s-1-j}\frac{\partial^{i-j}((\div^{s-1-i}\tau)\nu_{K,e}\nu_e^{\sigma})}{\partial{t_e^{i-j}}}.
\]
This indicates \eqref{eq:HsGreen2d}.
\end{proof}

As an immediate result of \eqref{eq:HsGreen2d}, we achieve the generalized Green's identity in two dimensions as follows.
\begin{lemma}\label{lem:greenidentity2d}
Let $K\in\mathcal T_h$ and integer $m> n=2$. There exist differential operators $D^{2m-1-|\alpha|}_{e, \alpha}$ for $e\in \mathcal F^1(K)$ and $\alpha\in A_{1}$ with $|\alpha|\leq m-1$, and $D^{2m-2-|\alpha|}_{\delta, \alpha}$ for $\delta\in \mathcal F^2(K)$ and $\alpha\in A_{2}$ with $|\alpha|\leq m-2$ such that for any $u\in H^{2m}(K)$ and $v\in H^{m}(K)$, it holds
\begin{align}
(\nabla^mu, \nabla^mv)_K = &\left((-\Delta)^mu,v\right)_K+ \sum_{e\in\mathcal F^1(K)}\sum_{\alpha\in A_{1}\atop|\alpha|\leq m-1}\Big ( D^{2m-1-|\alpha|}_{e, \alpha}(u), \;\frac{\partial^{|\alpha|}v}{\partial\nu_{e}^{\alpha}}\Big )_e \notag\\
 &+ \sum_{\delta\in\mathcal F^2(K)}\sum_{\alpha\in A_{2}\atop|\alpha|\leq m-2}D^{2m-2-|\alpha|}_{\delta, \alpha}(u)\frac{\partial^{|\alpha|}v}{\partial\nu_{\delta}^{\alpha}}(\delta). \label{eq:HmGreen2d}
\end{align}
\end{lemma}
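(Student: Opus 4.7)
The plan is to obtain \eqref{eq:HmGreen2d} as a direct specialization of the previously established identity \eqref{eq:HsGreen2d}, taking $s = m$ and $\tau = \nabla^m u$. The hypothesis $\tau \in H^s(K; \mathbb T_2(s))$ is met since $u \in H^{2m}(K)$ gives $\nabla^m u \in H^m(K; \mathbb T_2(m))$, and the bulk term $\big((-\div)^s \tau, v\big)_K$ reduces to $(-1)^m (\Delta^m u, v)_K = ((-\Delta)^m u, v)_K$, reproducing the first term on the right of \eqref{eq:HmGreen2d}.

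For the boundary contributions, the argument is a degree count. The operators $D^{s-1-|\alpha|}_{e,\alpha}$ and $D^{s-2-|\alpha|}_{\delta,\alpha}$ produced in the proof of \eqref{eq:HsGreen2d} act on the tensor $\tau$ by composing at most $s-1-|\alpha|$ (respectively $s-2-|\alpha|$) tangential and normal derivatives with $\tau$ itself. When $\tau = \nabla^m u$, the resulting expressions are linear combinations of partial derivatives of $u$ of total order exactly $m + (m-1-|\alpha|) = 2m - 1 - |\alpha|$ on each edge $e \in \mathcal F^1(K)$, and $m + (m-2-|\alpha|) = 2m - 2 - |\alpha|$ at each vertex $\delta \in \mathcal F^2(K)$. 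Relabelling these compositions by $D^{2m-1-|\alpha|}_{e,\alpha}(u)$ and $D^{2m-2-|\alpha|}_{\delta,\alpha}(u)$ delivers the remaining two sums in \eqref{eq:HmGreen2d} verbatim.

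The index ranges $|\alpha| \le s - 1 = m - 1$ on edges and $|\alpha| \le s - 2 = m - 2$ at vertices in \eqref{eq:HsGreen2d} coincide exactly with those demanded by \eqref{eq:HmGreen2d} when $n = 2$; the hypothesis $m > n$ is invoked only to guarantee that the vertex sum is genuinely new, i.e.\ $m - 2 \ge 1$, so that nontrivial terms with $|\alpha| \ge 1$ actually arise. I anticipate no real obstacle beyond the bookkeeping of derivative orders and multi-index conventions, since once \eqref{eq:HsGreen2d} is in hand the entire argument is a mechanical substitution combined with a check that the renamed operators are still differential operators of the claimed order acting on $u$.
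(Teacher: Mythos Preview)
Your proposal is correct and matches the paper's own approach: the paper states that \eqref{eq:HmGreen2d} is ``an immediate result of \eqref{eq:HsGreen2d}'', and you have simply spelled out that immediacy by taking $s=m$, $\tau=\nabla^m u$, and tracking the resulting derivative orders. The only superfluous remark is your justification of why $m>n$ is needed; in fact the lemma~\eqref{eq:HsGreen2d} already applies for any $s\geq n=2$, and the hypothesis $m>n$ is simply the standing assumption of the paper rather than something used in this particular deduction.
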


\subsection{Generalized Green's identity in $n$ dimensions}

Now we extend
Lemma~\ref{lemma:ibpedge} and Lemma~\ref{lem:greenidentity2d}
to any dimension. \XH{To this end, we recall two results in~\cite{ChenHuang2019}.
\begin{lemma}[Lemma~3.1 in \cite{ChenHuang2019}]\label{lem:20190605-1Pre}
Let $K\in\mathcal T_h$, $F\in \mathcal F^r(K)$ with $1\leq r\leq n-1$, and $s$ be a positive integer satisfying $s\leq n-r$.
There exist differential operators $D^{s-j-|\alpha|}_{e, \alpha}$ for $j=0,\cdots, s$, $e\in \mathcal F^j(F)$ and $\alpha\in A_{r+j}$ with $|\alpha|\leq s-j$ such that for any $\tau\in H^{s}(F; \mathbb T_{n}(s))$ and $(\nabla^sv)|_F\in L^2(F; \mathbb T_{n}(s))$, it holds
\begin{equation}\label{eq:HmfaceGreenPre}
(\tau, \nabla^sv)_F = \sum_{j=0}^s\sum_{e\in\mathcal F^j(F)}\sum_{\alpha\in A_{r+j}\atop|\alpha|\leq s-j}\Big ( D^{s-j-|\alpha|}_{e, \alpha}(\tau), \;\frac{\partial^{|\alpha|}v}{\partial\nu_{e}^{\alpha}}\Big )_e.
\end{equation}
\end{lemma}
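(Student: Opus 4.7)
I would prove this by induction on the derivative order $s$, with the codimension $r$ allowed to vary. The geometric picture is that on the $(n-r)$-dimensional face $F$, the ambient gradient decomposes as $\nabla = \nabla_F + \sum_{i=1}^{r} \nu_{F,i}\,\partial/\partial\nu_{F,i}$, separating $n-r$ tangential directions (along which we may integrate by parts on $F$) from $r$ normal directions (which stay on $v$ as normal derivatives). The hypothesis $s \le n-r$ ensures that $s$ successive tangential integrations by parts are feasible without exhausting the tangential dimensions of $F$.

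For the base case $s=1$, the decomposition gives $(\tau,\nabla v)_F = (\tau,\nabla_F v)_F + \sum_{i=1}^{r} (\tau\nu_{F,i},\partial v/\partial\nu_{F,i})_F$, and the surface divergence theorem on $F$ recasts the first term as $-(\div_F\tau,v)_F + \sum_{e\in\mathcal F^1(F)} (\tau\nu_{F,e},v)_e$, with $\nu_{F,e}$ the outward unit conormal to $e$ in $F$. These three contributions match the summands of \eqref{eq:HmfaceGreenPre} at $(j,|\alpha|) = (0,0)$, $(1,0)$, and $(0,1)$, respectively.

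For the inductive step, write $\nabla^s v = \nabla(\nabla^{s-1} v)$, decompose the outer gradient, apply the surface divergence theorem once on $F$, and commute the constant-direction derivative $\partial/\partial\nu_{F,i}$ past $\nabla^{s-1}$. This reduces $(\tau,\nabla^s v)_F$ to three kinds of terms: (a) $-(\div_F\tau, \nabla^{s-1} v)_F$ on $F$; (b) $(\tau\nu_{F,e}, \nabla^{s-1} v)_e$ for each $e\in\mathcal F^1(F)$; (c) $(\tau\nu_{F,i},\nabla^{s-1}(\partial v/\partial\nu_{F,i}))_F$ for $i=1,\ldots,r$. Apply the inductive hypothesis to each: (a) with $s\to s-1$ on $F$ and $\tau\to-\div_F\tau$; (b) with $s\to s-1$ on $e$ of codimension $r+1$, which is consistent since $s-1 \le n-(r+1)$; (c) with $s\to s-1$ on $F$ after the substitutions $v\mapsto\partial v/\partial\nu_{F,i}$ and $\tau\mapsto\tau\nu_{F,i}$. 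Because the conclusion only asserts the \emph{existence} of the operators $D^{\cdot}_{\cdot,\cdot}$, we may absorb the iterated surface divergences, normal-vector contractions, and sign factors that accumulate along the recursion directly into those operators.

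The main obstacle is the multi-index and face bookkeeping. One needs to check that the boundary terms arising from branch (b), living on $\mathcal F^{j-1}(e)\subset\mathcal F^{j}(F)$ with $\alpha\in A_{(r+1)+(j-1)} = A_{r+j}$, merge cleanly with the interior contributions from (a) and (c), and that in (c) the shift $\alpha\mapsto\alpha+e_i$ keeps $\alpha\in A_{r+j}$ while raising $|\alpha|$ by $1$ in step with the derivative accounting; in particular all indices must remain in the admitted ranges $0\le j\le s$ and $|\alpha|\le s-j$. This is the multidimensional analogue of the calculation already carried out in the proof of Lemma~\ref{lemma:ibpedge}; the only structural change is replacing the one-dimensional integration by parts used there with the $(n-r)$-dimensional surface divergence theorem on $F$.
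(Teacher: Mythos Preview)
Your proposal is correct and follows essentially the same route as the original proof in \cite{ChenHuang2019}: the paper does not reprove this lemma but quotes from that proof the recurrence relation \eqref{eq:20200123}, which is precisely the identity you obtain in your inductive step by splitting off one gradient, decomposing it into tangential and normal parts, and applying the surface divergence theorem on $F$. Your three branches (a), (b), (c) are exactly the three terms on the right of \eqref{eq:20200123}, and your observation that branch (b) requires $s-1\le n-(r+1)$, hence $s\le n-r$, pinpoints where the hypothesis is used.
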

Another one is the recurrence relation derived in the proof of Lemma~3.1 in \cite{ChenHuang2019}
\begin{equation}\label{eq:20200123}
(\tau, \nabla^{\ell+1} v)_F=\sum_{i=1}^r(\tau\nu_{F,i}, \nabla^{\ell}\frac{\partial v}{\partial\nu_{F,i}})_F - (\div_F\tau, \nabla^{\ell}v)_F +\sum_{e\in\mathcal F^1(F)}(\tau\nu_{F,e}, \nabla^{\ell}v)_e
\end{equation}
for any positive integer $\ell$, and $F\in \mathcal F^r(K)$ with $1\leq r\leq n-1$.}

\begin{lemma}
Let $K\in\mathcal T_h$, $F\in \mathcal F^r(K)$ with $1\leq r\leq n-1$, and positive integer $s\geq n-r$. There exist differential operators $D^{s-j-|\alpha|}_{e, \alpha}$ for $j=0,\cdots, n-r-1$, $e\in \mathcal F^j(F)$ and $\alpha\in A_{r+j}$ with $|\alpha|\leq s-j$, and differential operators $D^{s+r-n-|\alpha|}_{\delta, \alpha}$ for $\delta\in\mathcal F^{n-r}(F)$ and $\alpha\in A_{n}$ with $|\alpha|\leq s+r-n$ such that for any $\tau\in H^{s}(F; \mathbb T_{n}(s))$ and $(\nabla^sv)|_F\in L^2(F; \mathbb T_{n}(s))$, it holds
\begin{align}
(\tau, \nabla^sv)_F =& \sum_{j=0}^{n-r-1}\sum_{e\in\mathcal F^j(F)}\sum_{\alpha\in A_{r+j}\atop|\alpha|\leq s-j}\Big ( D^{s-j-|\alpha|}_{e, \alpha}(\tau), \;\frac{\partial^{|\alpha|}v}{\partial\nu_{e}^{\alpha}}\Big )_e  \notag\\
 &+ \sum_{\delta\in\mathcal F^{n-r}(F)}\sum_{\alpha\in A_{n}\atop|\alpha|\leq s+r-n}D^{s+r-n-|\alpha|}_{\delta, \alpha}(\tau)\frac{\partial^{|\alpha|}v}{\partial\nu_{\delta}^{\alpha}}(\delta).\label{eq:HsfaceGreen}
\end{align}
\end{lemma}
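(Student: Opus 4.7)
The plan is to establish \eqref{eq:HsfaceGreen} by a double induction: the outer induction on the dimension $d := n - r$ of $F$, and within each outer step a subsidiary induction on $s \ge d$. For the outer base case $d = 1$, $F$ is a one-dimensional segment whose two endpoints constitute $\mathcal F^1(F)$. Expanding $\nabla$ into its single tangential component plus the $n-1$ normal components $\nu_{F,1}, \ldots, \nu_{F,n-1}$ and raising to the $s$-th power decomposes $\nabla^s v|_F$ into tensor combinations of tangential and normal derivatives; iterated one-dimensional integration by parts in the tangential direction then produces an $F$-integral (the $j=0$ summand, since $\mathcal F^0(F)=\{F\}$) and endpoint evaluations (the vertex sum, with $s+r-n = s-1$ when $d=1$). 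This mirrors the argument used in Lemma~\ref{lemma:ibpedge} adapted to the general ambient dimension.

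For the outer inductive step $d \ge 2$, assume \eqref{eq:HsfaceGreen} holds for every face of dimension strictly less than $d$ (at every admissible derivative order). Within this step I argue by induction on $s$. The inner base $s = d = n - r$ falls under Lemma~\ref{lem:20190605-1Pre}: its $j \le d-1$ terms are absorbed into the first sum of \eqref{eq:HsfaceGreen}, while its $j = d$ term, which sums over $\mathcal F^d(F)$ (a set of vertices) with $|\alpha|=0$ forced by $|\alpha| \le s - j = 0$, matches the second sum (noting $s+r-n=0$). For the inner step $s > d$, apply the recurrence \eqref{eq:20200123} with $\ell = s - 1$. The first two terms of the recurrence, still integrals over $F$ against $\nabla^{s-1}$-quantities, are expanded either by the inner hypothesis (when $s - 1 \ge d$) or by Lemma~\ref{lem:20190605-1Pre} (when $s - 1 = d - 1 < d$). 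The edge terms $\sum_{e \in \mathcal F^1(F)} (\tau\nu_{F,e}, \nabla^{s-1} v)_e$, supported on $e \in \mathcal F^{r+1}(K)$ of dimension $d - 1$ with derivative order $s - 1 \ge d - 1$, are expanded by the outer induction hypothesis.

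Collecting the resulting contributions, all face integrals on subfaces of $F$ of codimensions $0, 1, \ldots, d-1$ are absorbed into the first sum of \eqref{eq:HsfaceGreen}, and all point evaluations into the second. The key index-matching observation is that invoking the outer hypothesis on $e \in \mathcal F^{r+1}(K)$ at order $s - 1$ produces vertex weights of total order $(s-1)+(r+1)-n = s+r-n$, exactly the exponent required in the target identity. The main obstacle is the combinatorial bookkeeping: derivatives such as $\frac{\partial v}{\partial \nu_{F,i}}$ arising from \eqref{eq:20200123} must be merged, together with the multi-indexed normal derivatives produced at subsequent stages, into derivatives of the form $\frac{\partial^{|\alpha|} v}{\partial \nu_e^\alpha}$ (respectively $\frac{\partial^{|\alpha|} v}{\partial \nu_\delta^\alpha}$) of the correct total order. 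Because the statement is existential in the operators $D^{\cdot}_{\cdot,\cdot}$, this accounting is tedious but not conceptually subtle.
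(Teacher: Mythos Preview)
Your proposal is correct and follows essentially the same approach as the paper: both arguments start from the base case $s=n-r$ via Lemma~\ref{lem:20190605-1Pre}, apply the recurrence~\eqref{eq:20200123} to pass from order $s-1$ to $s$, and invoke the identity at order $s-1$ on $F$ and on each $e\in\mathcal F^1(F)$. The only difference is organizational: the paper runs a single induction on $s$ (implicitly for all faces simultaneously), whereas you make this structure explicit as a double induction with the outer layer on $\dim F$; your version is slightly more transparent about why the edge term is covered, but the underlying argument is the same.
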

\begin{proof}
\XH{The identities \eqref{eq:HsfaceGreen} and \eqref{eq:HmfaceGreenPre} are same for $s=n-r$.}
Assume the identity~\eqref{eq:HsfaceGreen} holds for $s=\ell$ with integer $\ell\geq n-r$, then let us prove it is also true for $s=\ell+1$.
Applying \eqref{eq:HsfaceGreen} with $s=\ell$ to each term in the right hand side of \eqref{eq:20200123}, we have
\begin{align*}
(\tau\nu_{F,i}, \nabla^{\ell}\frac{\partial v}{\partial\nu_{F,i}})_F
&=\sum_{j=0}^{n-r-1}\sum_{e\in\mathcal F^j(F)}\sum_{\alpha\in A_{r+j}\atop|\alpha|\leq {\ell}-j}\Big (D^{\ell-j-|\alpha|}_{e, \alpha}(\tau\nu_{F,i}), \;\frac{\partial^{|\alpha|}}{\partial\nu_{e}^{\alpha}}\Big (\frac{\partial v}{\partial\nu_{F,i}}\Big )\Big )_e \\
&\quad+ \sum_{\delta\in\mathcal F^{n-r}(F)}\sum_{\alpha\in A_{n}\atop|\alpha|\leq \ell+r-n}D^{\ell+r-n-|\alpha|}_{\delta, \alpha}(\tau\nu_{F,i})\frac{\partial^{|\alpha|}}{\partial\nu_{\delta}^{\alpha}}\Big (\frac{\partial v}{\partial\nu_{F,i}}\Big )(\delta),
\end{align*}
\begin{align*}
 (\div_F\tau, \nabla^{\ell}v)_F&=\sum_{j=0}^{n-r-1}\sum_{e\in\mathcal F^j(F)}\sum_{\alpha\in A_{r+j}\atop|\alpha|\leq {\ell}-j}\Big (D^{\ell-j-|\alpha|}_{e, \alpha}(\div_F\tau), \;\frac{\partial^{|\alpha|}v}{\partial\nu_{e}^{\alpha}}\Big )_e \\
 &\quad + \sum_{\delta\in\mathcal F^{n-r}(F)}\sum_{\alpha\in A_{n}\atop|\alpha|\leq \ell+r-n}D^{\ell+r-n-|\alpha|}_{\delta, \alpha}(\div_F\tau)\frac{\partial^{|\alpha|}v}{\partial\nu_{\delta}^{\alpha}}(\delta),
\end{align*}
\begin{align*}
(\tau\nu_{F,e}, \nabla^{\ell}v)_e&=\sum_{j=0}^{n-r-2}\sum_{\tilde e\in\mathcal F^j(e)}\sum_{\alpha\in A_{r+1+j}\atop|\alpha|\leq {\ell}-j}\Big (D^{\ell-j-|\alpha|}_{\tilde e, \alpha}(\tau\nu_{F,e}), \;\frac{\partial^{|\alpha|}v}{\partial\nu_{\tilde e}^{\alpha}}\Big )_{\tilde e} \\
&\quad+ \sum_{\delta\in\mathcal F^{n-r-1}(e)}\sum_{\alpha\in A_{n}\atop|\alpha|\leq \ell+r+1-n}D^{\ell+r+1-n-|\alpha|}_{\delta, \alpha}(\tau\nu_{F,e})\frac{\partial^{|\alpha|}v}{\partial\nu_{\delta}^{\alpha}}(\delta).
\end{align*}
Hence we conclude~\eqref{eq:HsfaceGreen} for $s=\ell+1$ by combining the last fourth equations.
Finally we ends the proof based on the mathematical induction.
\end{proof}

\begin{lemma}
Let $K\in\mathcal T_h$, and positive integer $m> n$. There exist differential operators $D^{2m-j-|\alpha|}_{F, \alpha}$ for $j=1,\cdots, n-1$, $F\in \mathcal F^j(K)$ and $\alpha\in A_{j}$ with $|\alpha|\leq m-j$, and differential operators $D^{2m-n-|\alpha|}_{\delta, \alpha}$ for $\delta\in\mathcal F^{n}(K)$ and $\alpha\in A_{n}$ with $|\alpha|\leq m-n$ such that for any $u\in H^{2m}(K)$ and $v\in H^{m}(K)$, it holds
\begin{align}
(\nabla^mu, \nabla^mv)_K =& \left((-\Delta)^mu,v\right)_K+ \sum_{j=1}^{n-1}\sum_{F\in\mathcal F^j(K)}\sum_{\alpha\in A_{j}\atop|\alpha|\leq m-j}\Big ( D^{2m-j-|\alpha|}_{F, \alpha}(u), \;\frac{\partial^{|\alpha|}v}{\partial\nu_{F}^{\alpha}}\Big )_F  \notag\\
 &+ \sum_{\delta\in\mathcal F^{n}(K)}\sum_{\alpha\in A_{n}\atop|\alpha|\leq m-n}D^{2m-n-|\alpha|}_{\delta,\alpha}(u)\frac{\partial^{|\alpha|}v}{\partial\nu_{\delta}^{\alpha}}(\delta).\label{eq:HmGreen}
\end{align}
\end{lemma}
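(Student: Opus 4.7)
The plan is to mirror the two-dimensional proof of Lemma \ref{lem:greenidentity2d}, but with the face Green's identity \eqref{eq:HsfaceGreen} (supplemented by \eqref{eq:HmfaceGreenPre}) playing the role of Lemma \ref{lemma:ibpedge}. First, I would integrate by parts $m$ times on $K$, exactly as in \eqref{eq:20190217}, to obtain
\begin{equation*}
(\nabla^m u,\nabla^m v)_K = ((-\Delta)^m u, v)_K + \sum_{F\in \mathcal F^1(K)}\sum_{i=0}^{m-1}(-1)^{m-1-i}\bigl((\div^{m-1-i}\nabla^m u)\nu_{K,F},\,\nabla^i v\bigr)_F.
\end{equation*}
This separates the volume term $((-\Delta)^m u, v)_K$ from a sum of codimension-one boundary integrals indexed by the order $i = 0, \dots, m-1$ of the derivative of $v$.

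Second, for each face $F\in \mathcal F^1(K)$ and each order $i$, I would apply the face Green's identity to the $F$-integral with $\tau = (\div^{m-1-i}\nabla^m u)\nu_{K,F}$ and $s=i$: when $i\leq n-1$ invoke \eqref{eq:HmfaceGreenPre}, and when $i\geq n-1$ invoke \eqref{eq:HsfaceGreen} (they agree on the overlap $s=n-1$). This rewrites each $F$-integral as a sum of integrals on sub-faces $e\in \mathcal F^j(F)=\mathcal F^{1+j}(K)$ for $j=0,\dots,\min(i, n-2)$, plus, only when $i\geq n-1$, point values at $\delta\in \mathcal F^{n-1}(F)\subset \mathcal F^n(K)$. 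The resulting derivatives of $v$ on the sub-face $e$ have multi-index $\alpha\in A_{1+j}$ with $|\alpha|\leq i-j\leq m-1-j$, while the point-value derivatives have $\alpha\in A_n$ with $|\alpha|\leq i+1-n\leq m-n$.

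Third, I would reindex the codimension as $j':=1+j\in\{1,\dots,n-1\}$ and collect all contributions living on a given sub-face $F\in \mathcal F^{j'}(K)$ or vertex $\delta\in \mathcal F^n(K)$ into the consolidated differential operators $D^{2m-j'-|\alpha|}_{F,\alpha}(u)$ and $D^{2m-n-|\alpha|}_{\delta,\alpha}(u)$ appearing in \eqref{eq:HmGreen}. The bound $|\alpha|\leq m-1-j = m-j'$ matches the stated range on codimension-$j'$ faces, and the vertex bound $|\alpha|\leq m-n$ is nonnegative precisely because $m>n$; this is the extra vertex term flagged in the introduction as absent from the $m\leq n$ identity of \cite{ChenHuang2019}.

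The main obstacle is the combinatorial bookkeeping. Each application of \eqref{eq:HsfaceGreen} or \eqref{eq:HmfaceGreenPre} introduces a sum over choices of tangential versus normal directions (the higher-dimensional analogue of the permutation sets $\mathfrak S(\ell,i)$ in Lemma \ref{lemma:ibpedge}), so one must carefully verify that the multi-indices $\alpha$ on $\mathcal F^{j'}(K)$ lie in $A_{j'}$, that the derivative orders add up correctly to $|\alpha|\leq m-j'$, and that the divergence powers combined with the tangential derivatives produce a well-defined $(2m-j'-|\alpha|)$-th order operator acting on $u$. Apart from this accounting, no new analytic ingredient beyond \eqref{eq:HsfaceGreen} and the integration-by-parts step is needed; the operators $D^{2m-j-|\alpha|}_{F,\alpha}$ and $D^{2m-n-|\alpha|}_{\delta,\alpha}$ are defined implicitly by the regrouping, just as in the two-dimensional proof.
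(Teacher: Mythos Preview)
Your proposal is correct and follows essentially the same approach as the paper. The paper's proof is slightly more terse: it first passes to a general tensor $\tau$ in place of $\nabla^m u$ (stating the analogue \eqref{eq:HmGreentau}), integrates by parts $m$ times on $K$ exactly as you do, and then simply cites \eqref{eq:HmfaceGreenPre} and \eqref{eq:HsfaceGreen} for the face terms without writing out the reindexing or the order checks you describe; the bookkeeping you flag as the main obstacle is left implicit there as well.
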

\begin{proof}
It is sufficient to prove that there exist differential operators $D^{m-j-|\alpha|}_{F, \alpha}$ for $j=1,\cdots, n-1$, $F\in \mathcal F^j(K)$ and $\alpha\in A_{j}$ with $|\alpha|\leq m-j$, and differential operators $D^{m-n-|\alpha|}_{\delta, \alpha}$ for $\delta\in\mathcal F^{n}(K)$ and $\alpha\in A_{n}$ with $|\alpha|\leq m-n$ such that for any
$\tau\in H^{m}(K;\mathbb T_n(m))$ and $v\in H^{m}(K)$, it holds
\begin{align}
(\tau, \nabla^mv)_K =& \left((-\div)^m\tau,v\right)_K+ \sum_{j=1}^{n-1}\sum_{F\in\mathcal F^j(K)}\sum_{\alpha\in A_{j}\atop|\alpha|\leq m-j}\Big ( D^{m-j-|\alpha|}_{F, \alpha}(\tau), \;\frac{\partial^{|\alpha|}v}{\partial\nu_{F}^{\alpha}}\Big )_F  \notag\\
 &+ \sum_{\delta\in\mathcal F^{n}(K)}\sum_{\alpha\in A_{n}\atop|\alpha|\leq m-n}D^{m-n-|\alpha|}_{\delta,\alpha}(\tau)\frac{\partial^{|\alpha|}v}{\partial\nu_{\delta}^{\alpha}}(\delta).\label{eq:HmGreentau}
\end{align}
As \eqref{eq:20190217}, we get from the integration by parts
\[
(\tau, \nabla^mv)_K = ((-\div)^m\tau, v)_K + \sum_{F\in\mathcal F^1(K)}\sum_{j=1}^m\left(((-\div)^{j-1}\tau)\nu_{K,F},\nabla^{m-j}v\right)_F.
\]
Therefore \eqref{eq:HmGreentau} follows from \XH{\eqref{eq:HmfaceGreenPre}} and \eqref{eq:HsfaceGreen}.
\end{proof}

\subsection{Virtual element space}
Inspired by identity~\eqref{eq:HmGreen}, for any element $K\in\mathcal T_h$ and integer $k\geq m$,
the local degrees of freedom $\mathcal N_k(K)$ are given as follows:
\begin{align}
h_K^j(\nabla^jv)(\delta) & \quad\forall~\delta\in\mathcal F^{n}(K), \;j=0,1,\cdots,m-n,\label{Hmdof3}\\
\frac{1}{|F|^{(n-j-|\alpha|)/(n-j)}}(\frac{\partial^{|\alpha|}v}{\partial\nu_{F}^{\alpha}}, q)_F & \quad\forall~q\in\mathbb M_{k-(2m-j-|\alpha|)}(F), F\in\mathcal F^{j}(K), \label{Hmdof2}\\
&\quad j=1,\cdots, n-1, \alpha\in A_{j} \textrm{ with } |\alpha|\leq m-j, \notag\\
\frac{1}{|K|}(v, q)_K & \quad\forall~q\in\mathbb M_{k-2m}(K). \label{Hmdof1}
\end{align}
We will use $\chi_{j,i}^{F,\alpha}$ to denote the degrees of freedom~\eqref{Hmdof2} for simplicity, where $i=1,\cdots,N_{F,k-(2m-j-|\alpha|)}$. 

According to the first terms in the inner products of the right hand side of~\eqref{eq:HmGreen}, and the degrees of freedom~\eqref{Hmdof2}-\eqref{Hmdof1}, it is inherent to define the local space of the $H^m$-nonconforming virtual element as
\begin{align*}
V_k(K):=\{ u\in H^m(K):& (-\Delta)^m u\in \mathbb P_{k-2m}(K),   \\
& D^{2m-j-|\alpha|}_{F, \alpha}(u)|_F\in\mathbb P_{k-(2m-j-|\alpha|)}(F)\quad\forall~ F\in\mathcal F^{j}(K),\\
 &\qquad\qquad\quad j=1,\cdots, n-1, \,\alpha\in A_{j} \textrm{ and } |\alpha|\leq m-j\}.
\end{align*}

Combining \XH{Lemma~\ref{lem:20190605-1Pre}}, \eqref{eq:HsfaceGreen} and the definition of the degrees of freedom~\eqref{Hmdof2} yields the following property.
\begin{lemma}\label{lem:bimgunisolvence}
Let $K\in\mathcal T_h$, $F\in \mathcal F^r(K)$ with $1\leq r\leq n-1$, $s=n-r,\cdots, m-r$ satisfying $k\geq 2m-(r+s)$.
For any $\tau\in \mathbb P_{k-(2m-r-s)}(F; \mathbb T_{n}(s))$ and $(\nabla^sv)|_F\in L^2(F; \mathbb T_{n}(s))$, the term
\begin{equation*}
(\tau, \nabla^sv)_F
\end{equation*}
is uniquely determined by the degrees of freedom \eqref{Hmdof2} for all nonnegative integer $j\leq n-r-1$, $e\in\mathcal F^j(F)$, $\alpha\in A_{r+j}$ with $|\alpha|\leq s-j$, and \eqref{Hmdof3}
for all $\delta\in\mathcal F^{n-r}(F)$ and nonnegative integer $j\leq s+r-n$.
When $s<n-r$, the term
\begin{equation*}
(\tau, \nabla^sv)_F
\end{equation*}
is uniquely determined by the degrees of freedom \eqref{Hmdof2} for all nonnegative integer $j\leq s$, $e\in\mathcal F^j(F)$, $\alpha\in A_{r+j}$ with $|\alpha|\leq s-j$.
\end{lemma}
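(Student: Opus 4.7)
The plan is to apply the appropriate generalized Green's identity on the face $F$ to rewrite $(\tau,\nabla^s v)_F$ as a sum of simpler integral and point-evaluation terms, and then verify term-by-term that each piece is a linear combination of the prescribed degrees of freedom, with the coefficients completely determined by $\tau$.

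First I would split into the two cases dictated by the statement. For $s\geq n-r$, I would invoke \eqref{eq:HsfaceGreen} to expand
\[
(\tau,\nabla^s v)_F = \sum_{j=0}^{n-r-1}\sum_{e\in\mathcal F^j(F)}\sum_{\alpha\in A_{r+j}\atop|\alpha|\leq s-j}\bigl(D^{s-j-|\alpha|}_{e,\alpha}(\tau),\tfrac{\partial^{|\alpha|}v}{\partial\nu_e^\alpha}\bigr)_e + \sum_{\delta\in\mathcal F^{n-r}(F)}\sum_{\alpha\in A_n\atop|\alpha|\leq s+r-n}D^{s+r-n-|\alpha|}_{\delta,\alpha}(\tau)\tfrac{\partial^{|\alpha|}v}{\partial\nu_\delta^\alpha}(\delta),
\]
and for $s<n-r$ I would instead use \eqref{eq:HmfaceGreenPre}, which only produces face-integral contributions up to $j=s$.

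Next I would track polynomial degrees on the face-integral side. Since $\tau\in\mathbb P_{k-(2m-r-s)}(F;\mathbb T_n(s))$, the derived coefficient $D^{s-j-|\alpha|}_{e,\alpha}(\tau)$ restricted to $e$ is a polynomial of total degree at most $(k-2m+r+s)-(s-j-|\alpha|)=k-(2m-(r+j)-|\alpha|)$. Because $e\in\mathcal F^j(F)$ means $e\in\mathcal F^{r+j}(K)$, the monomial set $\mathbb M_{k-(2m-(r+j)-|\alpha|)}(e)$ is exactly the one appearing in \eqref{Hmdof2} for $e$ with multi-index $\alpha\in A_{r+j}$. Expanding $D^{s-j-|\alpha|}_{e,\alpha}(\tau)$ in this basis therefore turns $\bigl(D^{s-j-|\alpha|}_{e,\alpha}(\tau),\partial^{|\alpha|}v/\partial\nu_e^\alpha\bigr)_e$ into a linear combination (with $\tau$-dependent coefficients) of the degrees of freedom \eqref{Hmdof2} listed in the statement.

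For the point-evaluation terms (only in the $s\geq n-r$ case), each $\delta\in\mathcal F^{n-r}(F)\subset\mathcal F^n(K)$ is a vertex; the normals $\nu_{\delta,i}$ coincide with the coordinate axes, so $\partial^{|\alpha|}v/\partial\nu_\delta^\alpha(\delta)$ is a single component of $\nabla^{|\alpha|}v(\delta)$. The hypothesis $s\leq m-r$ yields $|\alpha|\leq s+r-n\leq m-n$, which is within the range $j\leq m-n$ allowed in \eqref{Hmdof3}, so this component is among the vertex degrees of freedom, and the scalar prefactor $D^{s+r-n-|\alpha|}_{\delta,\alpha}(\tau)$ is determined by $\tau$ alone.

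The principal obstacle I anticipate is purely the index bookkeeping: one has to be careful that the multi-index $\alpha$ attached to $e$ lies in $A_{r+j}$ rather than $A_j$ because the codimension in $K$ (not in $F$) is what governs the DOFs, and that the degree $k-(2m-(r+j)-|\alpha|)$ coming out of the differentiation matches the degree cap in \eqref{Hmdof2} rather than being strictly smaller (which could cause loss of information) or strictly larger (which would be outside the DOF set). Once these identifications are in place, the claim follows by simply reading off the Green's identity term-by-term.
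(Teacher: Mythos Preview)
Your proposal is correct and follows exactly the approach indicated in the paper, which simply states that the result follows by combining Lemma~\ref{lem:20190605-1Pre}, identity~\eqref{eq:HsfaceGreen}, and the definition of the degrees of freedom~\eqref{Hmdof2}. Your write-up in fact supplies more detail than the paper does, including the explicit degree count $k-(2m-(r+j)-|\alpha|)$ and the verification $|\alpha|\leq s+r-n\leq m-n$ for the vertex terms, both of which are left implicit in the paper.
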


Employing the same argument as in the proof of Lemma~3.5 in \cite{ChenHuang2019}, we get from the generalized Green's identity \eqref{eq:HmGreen} and Lemma~\ref{lem:bimgunisolvence} that the degrees of freedom~\eqref{Hmdof3}-\eqref{Hmdof1} are unisolvent for the local virtual element space $V_k(K)$.

\begin{remark}\rm
When $n=1$, for any element $e\in\mathcal T_h$ and integer $k\geq m$,
the local degrees of freedom \eqref{Hmdof3}-\eqref{Hmdof1} will be reduced to
\begin{align*}
v^{(j)}(\delta) & \quad\forall~\delta\in\mathcal F^{1}(e), \;j=0,1,\cdots,m-1, \\
\frac{1}{|e|}(v, q)_e & \quad\forall~q\in\mathbb M_{k-2m}(e).
\end{align*}
And the shape function space will be
\[
V_k(e)=\big\{ v\in H^m(e): v^{(2m)}\in \mathbb P_{k-2m}(e)\big\}=\begin{cases}
\mathbb P_{k}(e), & k\geq 2m,\\
\mathbb P_{2m-1}(e), & k<2m.
\end{cases}
\]
Thus the $H^m$-nonconforming virtual element of order $k$ in one dimension is exactly the $C^{m-1}$-continuous finite element, whose shape functions are polynomials of degree $\max\{k, 2m-1\}$.
\end{remark}

\begin{remark}\rm
When $n=2$, for any element $K\in\mathcal T_h$ and integer $k< 2m$,
the local degrees of freedom \eqref{Hmdof3}-\eqref{Hmdof1} will be reduced to
\begin{align}
h_K^j(\nabla^jv)(\delta) & \quad\forall~\delta\in\mathcal F^{2}(K), \;j=0,1,\cdots,m-2, \label{Hmdof0-kmn2}\\
|e|^{j-1}(\frac{\partial^{j}v}{\partial\nu_{e,1}^{j}}, q)_e & \quad\forall~q\in\mathbb M_{j+1-(2m-k)}(e), e\in\mathcal F^{1}(K), j=0,1,\cdots, m-1.\label{Hmdof1-kmn2}
\end{align}
And the shape function space will be
\begin{align*}
V_k(K)=\big\{ v\in H^m(K): &\, (-\Delta)^m v=0, \,D^{2m-1-j}_{e, \alpha}(v)|_e\in\mathbb P_{j+1-(2m-k)}(e) \textrm{ for each } \\
& \;\;e\in\mathcal F^{1}(K), \textrm{ where } \alpha=(j,0) \textrm{ with } j=0,1,\cdots, m-1\big\}.
\end{align*}
If each element $K\in \mathcal T_h$ is a simplex and $k=m>2$, the degrees of freedom \eqref{Hmdof0-kmn2}-\eqref{Hmdof1-kmn2} are same as those mentioned in \cite[page~268]{HuZhang2017}.
\end{remark}

Hereafter we always assume $n\geq 2$.

\begin{remark}\rm
When $k=m$, the local degrees of freedom~\eqref{Hmdof3}-\eqref{Hmdof1} will be reduced as
follows:
\begin{align}
(\nabla^jv)(\delta) & \quad\forall~\delta\in\mathcal F^{n}(K), j=0,1,\cdots,m-n, \label{Hmdof0-km}\\
(\frac{\partial^{|\alpha|}v}{\partial\nu_{F}^{\alpha}}, 1)_F & \quad\forall~F\in\mathcal F^{j}(K), j=1,\cdots, n-1, \alpha\in A_{j}, |\alpha|= m-j.\label{Hmdof2-km}
\end{align}
If each element $K\in \mathcal T_h$ is a simplex and $k=m=n+1$, the degrees of freedom \eqref{Hmdof0-km}-\eqref{Hmdof2-km} coincide with those of the nonconforming finite element in \cite{WuXu2019}.
\end{remark}

\subsection{Local projections}
To design the virtual element method, we first need a local $H^m$ projection.
We define a local $H^m$ projection $\Pi_k^K: H^m(K)\to\mathbb P_k(K)$ for each $K\in\mathcal T_h$ as follows: given $v\in H^m(K)$, let $\Pi_k^Kv\in\mathbb P_k(K)$ be the solution of the problem 
\begin{align}
(\nabla^m\Pi_k^Kv, \nabla^mq)_K&=(\nabla^mv, \nabla^mq)_K\quad  \forall~q\in \mathbb P_k(K),\label{eq:H2projlocal1}\\
\sum_{F\in\mathcal F^{r}(K)}Q_0^{F}(\nabla^{m-r}\Pi_k^Kv)&=\sum_{F\in\mathcal F^{r}(K)}Q_0^{F}(\nabla^{m-r}v), \quad r=1,\cdots, n-1,\label{eq:H2projlocal2} \\
\sum_{\delta\in\mathcal F^{n}(K)}(\nabla^{j}\Pi_k^Kv)(\delta)&=\sum_{\delta\in\mathcal F^{n}(K)}(\nabla^{j}v)(\delta), \quad j=0,1,\cdots, m-n.\label{eq:H2projlocal3}
\end{align}
The number of equations in~\eqref{eq:H2projlocal2}-\eqref{eq:H2projlocal3} is
\[
\sum_{r=1}^mC_{n+m-1-r}^{n-1}=C_{n+m-1}^{n}=\dim(\mathbb P_{m-1}(K)).
\]
Without causing any confusion, we will write $\Pi_k^K$ as $\Pi^K$ for simplicity.
By the similar argument as in Section~3.3 in \cite{ChenHuang2019}, we have the following results on $\Pi^K$ from the generalized Green's identity \eqref{eq:HmGreen} and Lemma~\ref{lem:bimgunisolvence}.
\begin{lemma}\label{lm:Pi}
The operator $\Pi^K: H^m(K)\to\mathbb P_k(K)$ is an $H^m$-stable projector, i.e.
\begin{equation}\label{eq:projectPk}
\Pi^Kq=q\quad\forall~q\in\mathbb P_k(K),
\end{equation}
\begin{equation}\label{eq:Hmprojbound}
|\Pi^Kv|_{m,K}\leq |v|_{m,K}\quad \forall~v\in H^m(K).
\end{equation}
And the projector $\Pi^K$ can be computed using only the degrees of freedom~\eqref{Hmdof3}-\eqref{Hmdof1}.
\end{lemma}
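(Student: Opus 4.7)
The plan is to establish the three assertions in sequence, relying on the generalized Green's identity \eqref{eq:HmGreen} and Lemma~\ref{lem:bimgunisolvence}. Since $\Pi^K v$ is sought in the finite-dimensional space $\mathbb P_k(K)$, its existence and uniqueness reduce to verifying that the homogeneous problem ($v=0$) admits only $\Pi^K v=0$. Testing \eqref{eq:H2projlocal1} with $q=\Pi^K v$ immediately gives $|\Pi^K v|_{m,K}=0$, hence $\Pi^K v\in \mathbb P_{m-1}(K)$, and it remains to show that \eqref{eq:H2projlocal2}--\eqref{eq:H2projlocal3} are unisolvent on $\mathbb P_{m-1}(K)$. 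I would do this by top-down induction on the degree: for $p\in\mathbb P_{\ell}(K)$ with $\ell\leq m-1$, the top derivative $\nabla^{\ell}p$ is a \emph{constant} tensor, so the summed constraint at the matching level (face-averaged when $\ell\geq m-n+1$, vertex-evaluated when $\ell\leq m-n$) reduces to a nonzero multiple of $\nabla^{\ell}p$ and forces it to vanish; peeling off one degree at a time drives $p$ to zero. Well-posedness then yields \eqref{eq:projectPk} because any $q\in\mathbb P_k(K)$ trivially solves its own defining system. The estimate \eqref{eq:Hmprojbound} follows by choosing $q=\Pi^K v$ in \eqref{eq:H2projlocal1} and applying Cauchy--Schwarz to get $|\Pi^K v|_{m,K}^2=(\nabla^m v,\nabla^m\Pi^K v)_K\leq |v|_{m,K}|\Pi^K v|_{m,K}$.

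For computability from the degrees of freedom, the right-hand side of \eqref{eq:H2projlocal1} is processed by substituting $u=q\in\mathbb P_k(K)$ into \eqref{eq:HmGreen}. The bulk term $((-\Delta)^m q,v)_K$ pairs $v$ against an element of $\mathbb P_{k-2m}(K)$ and is therefore encoded by the interior moments \eqref{Hmdof1}. Each codimension-$j$ boundary contribution $(D^{2m-j-|\alpha|}_{F,\alpha}(q),\partial^{|\alpha|}v/\partial\nu_{F}^{\alpha})_F$ features a polynomial $D^{2m-j-|\alpha|}_{F,\alpha}(q)\in \mathbb P_{k-(2m-j-|\alpha|)}(F)$, matching exactly the test space $\mathbb M_{k-(2m-j-|\alpha|)}(F)$ used in \eqref{Hmdof2}; Lemma~\ref{lem:bimgunisolvence} (applied with $s=|\alpha|\leq m-j$, the admissibility $k\geq 2m-(j+s)$ reducing to the standing assumption $k\geq m$) then certifies that the pairing is recoverable from the boundary DoFs \eqref{Hmdof2}--\eqref{Hmdof3} on $F$ and its sub-faces. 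The vertex terms $D^{2m-n-|\alpha|}_{\delta,\alpha}(q)\partial^{|\alpha|}v/\partial\nu_{\delta}^{\alpha}(\delta)$ with $|\alpha|\leq m-n$ are directly supplied by \eqref{Hmdof3}. The left-hand sides of \eqref{eq:H2projlocal2}--\eqref{eq:H2projlocal3} are handled in the same spirit: $Q_0^F(\nabla^{m-r}v)=|F|^{-1}(1,\nabla^{m-r}v)_F$ is the pairing of a constant tensor against $\nabla^{m-r}v$ on an $(n-r)$-face, covered by Lemma~\ref{lem:bimgunisolvence} with $s=m-r$, while $(\nabla^{j}v)(\delta)$ for $j\leq m-n$ is literally a DoF in \eqref{Hmdof3}.

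The main obstacle is the indexing bookkeeping: one must carefully confirm, for every pair $(j,\alpha)$ produced on the right-hand side of \eqref{eq:HmGreen} after the substitution $u=q\in \mathbb P_k(K)$, that the polynomial degree of $D^{2m-j-|\alpha|}_{F,\alpha}(q)$ matches the test space in \eqref{Hmdof2}, and that the pairs $(r,s)$ arising in every invocation of Lemma~\ref{lem:bimgunisolvence} satisfy $k\geq 2m-(r+s)$. Once this compatibility is secured, the three statements follow without further difficulty, completely parallel to the argument in Section~3.3 of \cite{ChenHuang2019}.
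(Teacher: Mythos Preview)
Your proposal is correct and follows essentially the same approach as the paper, which simply defers to the argument in Section~3.3 of \cite{ChenHuang2019} via the generalized Green's identity \eqref{eq:HmGreen} and Lemma~\ref{lem:bimgunisolvence}. One minor clean-up: the boundary terms $(D^{2m-j-|\alpha|}_{F,\alpha}(q),\partial^{|\alpha|}v/\partial\nu_{F}^{\alpha})_F$ arising from \eqref{eq:HmGreen} are \emph{directly} instances of the face DoFs \eqref{Hmdof2} (since $D^{2m-j-|\alpha|}_{F,\alpha}(q)\in\mathbb P_{k-(2m-j-|\alpha|)}(F)$), so no further reduction via Lemma~\ref{lem:bimgunisolvence} is needed there; the lemma is genuinely required only to express the tensorial face averages $Q_0^F(\nabla^{m-r}v)$ in \eqref{eq:H2projlocal2} in terms of the DoFs.
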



Denote by $I_K: H^m(K)\to V_k(K)$ the canonical interpolation operator based on the degrees of freedom in~\eqref{Hmdof3}-\eqref{Hmdof1}. Due to the last statement in Lemma \ref{lm:Pi}, we have
\begin{equation}\label{eq:20181012-1}
\Pi^Kv=\Pi^K(I_Kv)\quad\forall~v\in H^m(K).
\end{equation}

\subsection{Serendipity virtual element}

Following the ideas in \cite{BeiraodaVeigaBrezziMariniRusso2016c,Russo2016,BeiraoDaVeigaBrezziDassiMariniEtAl2018}, we will give a short discussion on the reduction of the virtual element $(K, \mathcal N_k(K), V_k(K))$ by the serendipity approach in this subsection.

For ease of presentation, all the degrees of freedom~\eqref{Hmdof3}-\eqref{Hmdof1} are denoted by $\chi_{1}, \chi_{2}, \cdots, \chi_{N_K}$ in order, where $N_K$ is the dimension of $V_k(K)$.
\XH{Assume there exist some positive integer $N_s\leq N_K$, nonnegative integer $k_s\leq k$ and permutation $\sigma$ of $(1,2,\cdots, N_K)$ such that
\begin{enumerate}[(S)]
\item for any $q\in\mathbb P_{k_s}(K)$ satisfying $\chi_{\sigma(1)}(q)=\chi_{\sigma(2)}(q)=\cdots=\chi_{\sigma(N_s)}(q)=0$, it holds $q=0$.
\end{enumerate}
Define an operator $\Pi_k^s: V_{k}(K)\to\mathbb P_{k_s}(K)$ for each $K\in\mathcal T_h$ as
\[
\sum_{i=1}^{N_s}\chi_{\sigma(i)}(\Pi_k^sv)\chi_{\sigma(i)}(q)=\sum_{i=1}^{N_s}\chi_{\sigma(i)}(v)\chi_{\sigma(i)}(q)\quad  \forall~v\in V_k(K),\; q\in \mathbb P_{k_s}(K).
\]
The assumption (S) ensures the well-posedness of the operator $\Pi_k^s$, and
\begin{equation}\label{eq:20190726-1}
\Pi_k^sq=q\quad\forall~q\in\mathbb P_{k_s}(K).
\end{equation}
Define the space of the serendipity shape functions
\[
V_k^s(K) :=\{v\in V_{k}(K): \chi_{\sigma(i)}(v)=\chi_{\sigma(i)}(\Pi_k^sv)\quad \textrm{ for } i=N_s+1,\cdots, N_K\}.
\]
Due to \eqref{eq:20190726-1}, it holds $\mathbb P_{k_s}(K)\subseteq V_k^s(K)$.
Let $\mathcal N_k^s(K):=\{\chi_{\sigma(1)}, \chi_{\sigma(2)}, \cdots, \chi_{\sigma(N_s)}\}$, then we obtain the serendipity virtual element $(K, \mathcal N_k^s(K), V_k^s(K))$.
The well-posedness of the serendipity virtual element $(K, \mathcal N_k^s(K), V_k^s(K))$ follows from \eqref{eq:20190726-1} and the well-posedness of the virtual element $(K, \mathcal N_k(K), V_k(K))$.
}

\XH{Now we give an example to illustrate the previous process. Let $n=2$, $m=3$, $k=5$ and $K$ be a triangle,
then the local degrees of freedom \eqref{Hmdof3}-\eqref{Hmdof1} will be reduced to
\begin{align*}
v(\delta), \;\nabla v(\delta) & \quad\forall~\delta\in\mathcal F^{2}(K), \\
(\frac{\partial^{j}v}{\partial\nu_{e,1}^{j}}, q)_e & \quad\forall~q\in\mathbb M_{j}(e), e\in\mathcal F^{1}(K), j=0,1,2.
\end{align*}
Take $k_s=4$, the reduced virtual element space $V_5^s(K)=\mathbb P_{4}(K)$ and the following reduced freedoms of freedom $\mathcal N_5^s(K)$:
\begin{align*}
v(\delta), \;\nabla v(\delta) & \quad\forall~\delta\in\mathcal F^{2}(K), \\
(v, 1)_e,\;(\frac{\partial^{2}v}{\partial\nu_{e,1}^{2}}, 1)_e & \quad\forall~e\in\mathcal F^{1}(K).
\end{align*}
The assumption (S) for $\mathcal N_5^s(K)$ and $\mathbb P_{4}(K)$ holds due to Lemma~3.1 in \cite{HuZhang2017}. Indeed the serendipity virtual element $(K, \mathcal N_5^s(K), V_5^s(K))$ is exactly the $H^3$-nonconforming finite element of the first case (2.2) in \cite{HuZhang2017}.
}

\XH{Therefore, with a suitable choice of the degrees of freedom $\mathcal N_k^s(K)$, the serendipity virtual element $(K, \mathcal N_k^s(K), V_k^s(K))$ may reduce to an $H^m$-nonconforming finite element, i.e. $V_k^s(K)=\mathbb P_{k_s}(K)$. We point out that it is not easy to verify the  assumption~(S). However, it gives a hint to
recover some existing finite elements and construct new $H^m$-nonconforming finite elements.
}

\section{Discrete Method}

We will present the $H^m$-nonconforming virtual element method for the polyharmonic equation based on the virtual element $(K, \mathcal N_k(K), V_k(K))$
in this section.

\subsection{Discretization}
Consider the polyharmonic equation with homogeneous Dirichlet boundary condition
\begin{equation}\label{eq:polyharmonic}
\begin{cases}
(-\Delta)^mu=f\qquad\qquad\qquad\quad\text{in
}\Omega, \\
u=\frac{\partial u}{\partial \nu}=\cdots=\frac{\partial^{m-1} u}{\partial \nu^{m-1}}=0 \quad\text{on
}\partial\Omega,
\end{cases}
\end{equation}
where $f\in L^2(\Omega)$ and $\Omega\subset \mathbb{R}^n$ with $m>n\geq2$. 
The variational formulation of the polyharmonic equation~\eqref{eq:polyharmonic} is to find $u\in H_0^m(\Omega)$ such that
\[
(\nabla^mu, \nabla^mv)=(f, v)\quad \forall~v\in H_0^m(\Omega).
\]

Let the global $H^m$-nonconforming virtual element space be
\begin{align*}
V_h:=\{v_h\in L^2(\Omega): & v_h|_K\in V_k(K)\textrm{ for each } K\in\mathcal T_h; \; \textrm{all the degrees of } \\
&\textrm{freedom \eqref{Hmdof3}-\eqref{Hmdof2} are continuous across each } F\in\mathcal{F}_h^{r, i}, \\
&\qquad\qquad\;\;\textrm{ and vanish on each }  F\in\mathcal{F}_h^{r, \partial} \textrm{ for } r=1,\cdots, n\}.
\end{align*}

To introduce the bilinear form, let the stabilization
\begin{align*}
S_K(w, v):=&\,h_K^{n-2m} \sum_{j=1}^{n-1}\sum_{F\in\mathcal F^j(K)}\sum_{\alpha\in A_{j}\atop|\alpha|\leq m-j}\sum_{i=1}^{N_{F,k-(2m-j-|\alpha|)}}\chi_{j,i}^{F,\alpha}(w)\chi_{j,i}^{F,\alpha}(v) \\
&+ h_K^{n-2m}\sum_{\delta\in\mathcal F^{n}(K)}\sum_{j=0}^{m-n}h_K^{2j}(\nabla^jw)(\delta):(\nabla^jv)(\delta).
\end{align*}
\XH{The stabilization term $S_K(\cdot, \cdot)$ only includes the boundary degrees of freedom,
whereas all the degrees of freedom are involved in the stabilization term in \cite{ChenHuang2019}.}
Define the local bilinear form $a_{h,K}(\cdot, \cdot): V_k(K)\times V_k(K)\to\mathbb R$ as
\[
a_{h,K}(w, v):=(\nabla^m\Pi^{K}w, \nabla^m\Pi^{K}v)_K+S_{K}(w-\Pi^{K}w, v-\Pi^{K}v),
\]
and the global bilinear form $a_h(\cdot, \cdot): V_h\times V_h\to\mathbb R$ as
\[
a_h(w_h, v_h):=\sum_{K\in\mathcal T_h}a_{h,K}(w_h, v_h).
\]

To present the right hand side, for any nonnegative integer $\ell$, denote by $Q_h^l$ the $L^2$-orthogonal projection onto $\mathbb P_{l}(\mathcal T_h)$. Define $\Pi_h: H^{m}(\mathcal T_h) \to \mathbb P_{k}(\mathcal T_h)$ as follows: given $v\in H^{m}(\mathcal T_h)$,
\[
(\Pi_h v) |_{K} : = \Pi^K (v|_{K})\quad\forall~K\in\mathcal T_h.
\]
Then the right hand side is given by
$$
\langle f, v_h \rangle : =
\begin{cases}
\, (f, \Pi_h v_h ), & m\leq k \leq 2m -1, \\
\, (f, Q_h^{m-1}\Pi_h v_h+Q_h^{k-2m}(v_h-\Pi_h v_h)), & 2m \leq k \leq 3m - 2,\\
\, (f, Q_h^{k-2m} v_h ), & 3m-1\leq k.
\end{cases}
$$
Notice that when $2m \leq k \leq 3m - 2$, it holds
\begin{equation}\label{eq:20190805-1}
(Q_h^{m-1}\Pi_h v_h+Q_h^{k-2m}(v_h-\Pi_h v_h), q)=(v_h, q)\quad\forall~q\in \mathbb P_{k-2m}(\mathcal T_h).
\end{equation}

Combining previous components leads to
the $H^m$-nonconforming virtual element method for the polyharmonic equation~\eqref{eq:polyharmonic} in any dimension: find $u_h\in V_h$ such that
\begin{equation}\label{polyharmonicNoncfmVEM}
a_h(u_h, v_h)=\langle f, v_h\rangle\quad \forall~v_h\in V_h.
\end{equation}

\begin{remark}\rm
The virtual element method \eqref{polyharmonicNoncfmVEM} is completely determined by the degrees of freedom $\mathcal N_k(K)$, i.e.~\eqref{Hmdof3}-\eqref{Hmdof1}.
The space of shape functions $V_k(K)$ is not necessary in the definition and thus the implementation of the virtual element method \eqref{polyharmonicNoncfmVEM}.
Introducing $V_k(K)$ is merely for the purpose of analysis, thus the space of shape functions $V_k(K)$ is virtual.
\end{remark}

\begin{remark}\rm
Differently from \cite{ChenHuang2019}, the global virtual element space $V_h$ in this paper is defined directly from $V_K$ rather than some modification of $V_K$, since indeed we do not need the computable $L^2$-projection $Q_h^{m-1}v_h$ for any $v_h\in V_h$.
\end{remark}

\subsection{Inverse inequality and Poincar\'e inequality}
For any $K\in\mathcal T_h$, let $B_K$ be the maximal ball with respect to which $K$ is star-shaped, and $K_s\subset\mathbb R^n$ be the regular inscribed simplex of $B_K$, where all the edges of $K_s$ share the common length.

\begin{lemma}\label{lem:polynomialequiv}
Assume the mesh $\mathcal T_h$ satisfies condition (A1) and $K\in\mathcal T_h$.
It holds for any nonnegative integer $\ell$ that
\begin{equation}\label{eqn:polynomialequiv}
\|q\|_{0, K}\eqsim \|q\|_{0, K_s}\quad\forall~q\in \mathbb P_{\ell}(K).
\end{equation}
\end{lemma}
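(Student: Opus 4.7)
The plan is to prove the two inequalities separately. One direction is immediate: because $K$ is star-shaped with respect to $B_K$, we have $K_s\subset B_K\subset K$, which gives $\|q\|_{0,K_s}\leq \|q\|_{0,K}$ for every $q\in \mathbb P_\ell(K)$. The content is the reverse bound $\|q\|_{0,K}\lesssim \|q\|_{0,K_s}$, which I would obtain by scaling to a reference configuration and then invoking finite-dimensional norm equivalence on $\mathbb P_\ell$.

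Write $B_K=B(x_0,\rho_K)$; by assumption (A1), the ratio $h_K/\rho_K$ is bounded by a constant depending only on the chunkiness parameter of $K$. Introduce the affine change of variables $\phi(x):=(x-x_0)/\rho_K$, and define $\hat q(\hat x):=q(x_0+\rho_K\hat x)\in\mathbb P_\ell$. Then $\hat K:=\phi(K)\subset B(0,h_K/\rho_K)$, $\phi(B_K)=B(0,1)$, and $\widehat{K_s}:=\phi(K_s)$ is a regular simplex inscribed in $B(0,1)$, hence of edge length and volume depending only on $n$. The Jacobian of $\phi$ gives
\begin{equation*}
\|q\|_{0,K}^2=\rho_K^n\,\|\hat q\|_{0,\hat K}^2,\qquad \|q\|_{0,K_s}^2=\rho_K^n\,\|\hat q\|_{0,\widehat{K_s}}^2,
\end{equation*}
so, since $\hat K\subset B(0,h_K/\rho_K)$, it suffices to establish $\|\hat q\|_{0,B(0,h_K/\rho_K)}\lesssim \|\hat q\|_{0,\widehat{K_s}}$ with a constant depending only on $n$, $\ell$, and the chunkiness bound.

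The remaining subtlety is that $\widehat{K_s}$ still depends on $K$ through the orientation of the inscribed simplex. I would remove this by a rotation argument: any regular simplex inscribed in the unit ball and centered at the origin equals $R(S_0)$ for a fixed reference simplex $S_0$ and some $R\in SO(n)$. Because $B(0,h_K/\rho_K)$ is rotation invariant and $\hat q\mapsto \hat q\circ R$ is a bijection of $\mathbb P_\ell$, a change of variables shows that the ratio $\|\hat q\|_{0,B(0,h_K/\rho_K)}/\|\hat q\|_{0,\widehat{K_s}}$ equals the corresponding ratio with $\widehat{K_s}$ replaced by $S_0$. Since $S_0$ has non-empty interior, $\tilde q\mapsto \|\tilde q\|_{0,S_0}$ is a genuine norm on the finite-dimensional space $\mathbb P_\ell$, so norm equivalence gives the estimate with a constant depending only on $n$, $\ell$, and the chunkiness bound. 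Unscaling concludes the proof. The one nontrivial point is precisely this uniformity of the constant in the orientation of $K_s$; once it is dispatched by rotation invariance, the remainder is routine scaling and finite-dimensional norm equivalence.
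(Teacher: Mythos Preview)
Your proof is correct, but it proceeds by a genuinely different route from the paper's. The paper argues pointwise via Taylor's theorem: for each $\boldsymbol x\in K$ it expands $t\mapsto q(\boldsymbol x_{K_s}+t(\boldsymbol x-\boldsymbol x_{K_s}))$ about $t=0$, bounds the derivatives $\nabla^i q(\boldsymbol x_{K_s})$ by the inverse inequality on the simplex $K_s$, and concludes $\|q\|_{L^\infty(K)}\lesssim h_{K_s}^{-n/2}\|q\|_{0,K_s}$, from which the $L^2$ bound follows by $h_K\eqsim h_{K_s}$. You instead scale to a reference configuration, use rotation invariance of the ball to replace the (arbitrarily oriented) inscribed regular simplex by a fixed one, and invoke finite-dimensional norm equivalence. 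The paper's argument is more constructive and does not exploit any symmetry of $K_s$; it would work equally well for any shape-regular simplex sitting inside $K$. Your argument is cleaner but leans on the specific choice of a \emph{regular} inscribed simplex (so that all orientations are related by rotations); one small point you leave implicit is that the remaining parameter $h_K/\rho_K$ can be bounded above by the chunkiness constant, allowing $B(0,h_K/\rho_K)$ to be replaced by a fixed ball before applying norm equivalence.
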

\begin{proof}
Taking any $\bs x\in K$, let
\[
v(t):=q(\bs x_{K_s}+t(\bs x-\bs x_{K_s}))\quad\forall~t\in[0,1].
\]
Then $v(t)$ is a polynomial of degree $\ell$ on the interval $[0,1]$. By the Taylor's theorem, it follows
\[
q(\bs x)=v(1)=\sum_{i=0}^{\ell}\frac{v^{(i)}(0)}{i!}.
\]
Since $v^{(i)}(0)=\nabla^iq(\bs x_{K_s})(\bs x-\bs x_{K_s},\cdots, \bs x-\bs x_{K_s})$, we get from the inverse inequality of polynomials on $K_s$
\[
|v^{(i)}(0)|\lesssim h_K^i\|\nabla^iq\|_{L^{\infty}(K_s)}\lesssim \|q\|_{L^{\infty}(K_s)}\lesssim h_{K_s}^{-n/2}\|q\|_{0,K_s}.
\]
Thus we have
\[
|q(\bs x)|\lesssim h_{K_s}^{-n/2}\|q\|_{0,K_s}\quad\forall~\bs x\in K,
\]
which implies
\[
\|q\|_{0,K}\lesssim h_{K}^{n/2}\|q\|_{L^{\infty}(K)}\lesssim \|q\|_{0,K_s}.
\]
The another side of \eqref{eqn:polynomialequiv} is clear.
\end{proof}

\begin{lemma}
Assume the mesh $\mathcal T_h$ satisfies condition (A1) and $K\in\mathcal T_h$.
It holds for any nonnegative integers $\ell$ and $i$ that
\begin{equation}\label{eq:polyinverse}
 \| q \|_{0,K} \lesssim h_K^{-i}\| q \|_{-i,K} \quad \forall~q\in \mathbb P_{\ell}(K).
\end{equation}
\end{lemma}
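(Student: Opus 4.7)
The plan is to construct an explicit bubble-weighted test function supported inside the inscribed regular simplex $K_s$ so that pairing it against $q$ is controlled from below, while its $H^i$ norm is controlled from above by a classical scaling argument, thereby realising a sharp lower bound on $\|q\|_{-i,K}$.

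First, by Lemma~\ref{lem:polynomialequiv}, $\|q\|_{0,K}\eqsim \|q\|_{0,K_s}$, and by assumption~(A1), $h_{K_s}\eqsim h_K$; hence it is enough to bound $\|q\|_{0,K_s}$ by $h_K^{-i}\|q\|_{-i,K}$. Let $b_{K_s}$ denote the product of the barycentric coordinates of $K_s$ (the standard polynomial bubble); since $b_{K_s}$ vanishes to order $1$ on $\partial K_s$, the function
\[
v:=b_{K_s}^i q,
\]
extended by zero outside $K_s$, lies in $H_0^i(K)$.

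Second, transfer to a fixed reference regular simplex $\hat K$ via the affine map $\hat K\to K_s$. On $\hat K$, the symmetric bilinear form $(\hat q_1,\hat q_2)_{\hat b}:=\int_{\hat K}\hat b^i \hat q_1\hat q_2\,d\hat x$ is positive-definite on the finite-dimensional space $\mathbb P_\ell(\hat K)$ (nondegeneracy comes from $\hat b^i>0$ on the interior), hence equivalent to the $L^2$ inner product. Rescaling gives
\[
(q,v)_K=\int_{K_s}b_{K_s}^iq^2\,dx\eqsim \|q\|_{0,K_s}^2.
\]
Third, Leibniz's rule combined with the boundedness of $\hat b^i$ and its derivatives together with the classical inverse inequality $\|\nabla^j\hat q\|_{0,\hat K}\lesssim \|\hat q\|_{0,\hat K}$ on $\mathbb P_\ell(\hat K)$ yields $|\hat v|_{j,\hat K}\lesssim \|\hat q\|_{0,\hat K}$ for $0\leq j\leq i$. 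Standard affine scaling gives $|v|_{j,K_s}\lesssim h_K^{-j}\|q\|_{0,K_s}$, and summing,
\[
\|v\|_{i,K}^2=\sum_{j=0}^i|v|_{j,K_s}^2\lesssim h_K^{-2i}\|q\|_{0,K_s}^2,
\]
the top-order term dominating since $h_K$ is bounded above by $\operatorname{diam}\Omega$.

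Fourth, combining the two displays with the definition of the dual norm,
\[
\|q\|_{-i,K}\geq \frac{(q,v)_K}{\|v\|_{i,K}}\gtrsim h_K^{i}\|q\|_{0,K_s}\eqsim h_K^{i}\|q\|_{0,K},
\]
which rearranges to \eqref{eq:polyinverse}. The main obstacle is bookkeeping the constants in the inverse estimate on $\mathbb P_\ell(\hat K)$ and the nondegeneracy of $(\cdot,\cdot)_{\hat b}$: both are instances of finite-dimensional norm equivalence, and because $\hat K$ is fixed depending only on $n$, all constants are universal. The scaling step silently uses that $h_K$ is uniformly bounded above, which is an innocuous consequence of $\Omega$ being bounded.
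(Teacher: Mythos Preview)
Your proof is correct.  The paper follows the same opening move---use Lemma~\ref{lem:polynomialequiv} to transfer from $K$ to the inscribed simplex $K_s$---but then finishes in one line by citing the known inverse inequality $\|q\|_{0,K_s}\lesssim h_{K_s}^{-i}\|q\|_{-i,K_s}$ for polynomials on simplices and the monotonicity $\|q\|_{-i,K_s}\le\|q\|_{-i,K}$ (extension by zero of $H_0^i(K_s)$ test functions into $H_0^i(K)$).  Your bubble-weighted construction $v=b_{K_s}^i q$ is precisely an explicit witness for that cited inverse inequality: you build the duality pairing by hand, verify positivity via finite-dimensional norm equivalence on a reference simplex, and scale.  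So the route is the same in spirit (reduce to the simplex, then invoke duality on the simplex), but you have supplied the proof of the step the paper treats as a black box; this buys self-containment at the cost of length, while the paper's version keeps the argument to two lines.
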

\begin{proof}
Applying \eqref{eqn:polynomialequiv} and the inverse inequality of polynomials on simplices, it follows
\[
\|q\|_{0,K}\lesssim \|q\|_{0, K_s}\lesssim h_K^{-i}\| q \|_{-i,K_s},
\]
which yields \eqref{eq:polyinverse}.
\end{proof}

Applying the trace inequality \eqref{L2trace} and the same argument used in the proof of Lemma~A.5 in \cite{ChenHuang2019}, we get
 the Poincar\'e inequality on the kernel of the local $H^m$ projection $\Pi^K$ under the mesh conditions (A1)-(A2)
\begin{equation}\label{eq:poincare}
\sum_{j=0}^{n}\sum_{s=0}^{m-j}\sum_{F\in\mathcal F^j(K)}h_K^{s+j/2}\|\nabla^s v\|_{0,F}\lesssim h_K^m\|\nabla^m v\|_{0,K}\quad\forall~v\in \ker(\Pi^K),\; K\in\mathcal T_h,
\end{equation}
where $\ker(\Pi^K):=\{v\in H^m(K): \Pi^Kv=0\}$.

\subsection{Norm equivalence}

\begin{lemma}
Assume the mesh $\mathcal T_h$ satisfies conditions (A1) and (A2).
For any $K\in\mathcal T_h$, we have
\begin{equation}\label{eq:20190122-1}
S_K(v,v)\lesssim \sum_{j=0}^mh_K^{2(j-m)}|v|_{j,K}^2\quad\forall~v\in V_k(K),
\end{equation}
\begin{equation}\label{eq:SKequiv2}
S_K(v,v)\lesssim \|\nabla^m v\|_{0,K}^2\quad\forall~v\in V_k(K)\cap\ker(\Pi^K).
\end{equation}
\end{lemma}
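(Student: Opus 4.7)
The plan is to bound each of the two types of terms appearing in $S_K(v,v)$ separately, namely the face moment terms $\chi_{j,i}^{F,\alpha}(v)$ and the vertex derivative values $(\nabla^j v)(\delta)$, using only standard tools: Cauchy--Schwarz, the trace inequality, a Sobolev embedding into $L^\infty$, and (for the kernel statement) the Poincar\'e-type estimate \eqref{eq:poincare}.

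\emph{Step 1: face terms.} For $F\in\mathcal F^j(K)$, $1\leq j\leq n-1$, and $q\in\mathbb M_{k-(2m-j-|\alpha|)}(F)$ a scaled monomial, we have $\|q\|_{0,F}\lesssim h_K^{(n-j)/2}$ and $|F|\eqsim h_K^{n-j}$. Cauchy--Schwarz gives
\[
|\chi_{j,i}^{F,\alpha}(v)|^2 \lesssim h_K^{2|\alpha|-(n-j)} \|\nabla^{|\alpha|}v\|_{0,F}^2.
\]
Iterating the trace inequality \eqref{L2trace} $j$ times via a chain $K\supset F_1\supset\cdots\supset F_j=F$ (using (A1)--(A2) and $h_{F_i}\eqsim h_K$), or equivalently scaling the codimension-$j$ trace theorem $H^j(\hat K)\hookrightarrow L^2(\hat F)$ to $K$, yields
\[
\|\nabla^{|\alpha|}v\|_{0,F}^2 \lesssim \sum_{i=0}^{j} h_K^{2i-j}|v|_{|\alpha|+i,K}^2.
\]
Multiplying by $h_K^{n-2m}$ and substituting, the exponents on $h_K$ collapse to $2(|\alpha|+i-m)$, and since $|\alpha|\leq m-j$, the range $|\alpha|\leq \ell:=|\alpha|+i\leq |\alpha|+j\leq m$ gives a contribution bounded by $\sum_{\ell=0}^m h_K^{2(\ell-m)}|v|_{\ell,K}^2$.

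\emph{Step 2: vertex terms.} For $\delta\in\mathcal F^n(K)$ and $0\leq j_0\leq m-n$, since $m>n$ we may use the Sobolev embedding $H^{m-j_0}(K)\hookrightarrow L^\infty(K)$ (valid because $m-j_0\geq n>n/2$ and $K$ is star-shaped with bounded chunkiness parameter by (A1)). A standard scaling argument yields
\[
|(\nabla^{j_0}v)(\delta)|^2 \lesssim \sum_{i=0}^{m-j_0}h_K^{2i-n}|v|_{j_0+i,K}^2.
\]
Multiplying by the weight $h_K^{n-2m+2j_0}$ produces $\sum_{\ell=j_0}^m h_K^{2(\ell-m)}|v|_{\ell,K}^2$, again bounded by the right-hand side of \eqref{eq:20190122-1}. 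Combining Steps 1--2 and summing over all faces, vertices, multi-indices and scaled monomials (the total count being bounded independently of $h_K$) establishes \eqref{eq:20190122-1}.

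\emph{Step 3: kernel estimate.} For $v\in V_k(K)\cap\ker(\Pi^K)$, the Poincar\'e-type inequality \eqref{eq:poincare} (applied with $j=0$, $F=K$) delivers $|v|_{\ell,K}\lesssim h_K^{m-\ell}\|\nabla^m v\|_{0,K}$ for each $0\leq\ell\leq m$. Substituting into \eqref{eq:20190122-1} gives
\[
S_K(v,v)\lesssim \sum_{\ell=0}^m h_K^{2(\ell-m)}\cdot h_K^{2(m-\ell)}\|\nabla^m v\|_{0,K}^2 \lesssim \|\nabla^m v\|_{0,K}^2,
\]
which is \eqref{eq:SKequiv2}.

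\emph{Main obstacle.} The only nonroutine point is obtaining the Sobolev embedding for pointwise evaluation of $\nabla^{j_0}v$ at vertices with constants depending only on the chunkiness parameter (and not on $h_K$ or the shape of $K$ beyond (A1)). This is where the hypothesis $m>n$ is essential: it supplies the extra regularity needed so that all derivatives $\nabla^{j_0}v$ with $j_0\leq m-n$ have well-defined pointwise traces and the required embedding holds with a scale-invariant constant on the star-shaped element $K$.
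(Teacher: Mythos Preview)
Your proof is correct and follows essentially the same outline as the paper: bound the face-moment terms by Cauchy--Schwarz and iterated trace inequalities, bound the vertex terms, sum up, and then invoke the Poincar\'e inequality \eqref{eq:poincare} on $\ker(\Pi^K)$ for the second estimate.

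The one genuine difference is in your Step~2. You control the vertex values $(\nabla^{j_0}v)(\delta)$ by a direct Sobolev embedding $H^{m-j_0}(K)\hookrightarrow L^\infty(K)$ (valid because $m-j_0\geq n>n/2$), and you correctly flag the shape-robustness of the embedding constant as the main technical point. The paper instead never invokes an $n$-dimensional Sobolev embedding: it applies the one-dimensional trace inequality \eqref{L2trace} (recall that when $D\subset\mathbb R$ the paper sets $\|v\|_{0,\partial D}:=\|v\|_{L^\infty(\partial D)}$) to pass from vertex values to $L^2$-norms on one-dimensional edges $e\in\mathcal F^{n-1}(K)$, thereby merging the vertex contribution into the same family of face-norm terms as in Step~1, and then iterates \eqref{L2trace} uniformly down to $K$. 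The paper's route stays entirely within the trace-inequality machinery already set up, whereas your route isolates the role of the hypothesis $m>n$ more transparently; both arrive at \eqref{eq:20190122-1} with constants depending only on the chunkiness parameter.
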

\begin{proof}
We get from the proof of Lemma~A.6 in \cite{ChenHuang2019}
\begin{align*}
&h_K^{n-2m} \sum_{j=1}^{n-1}\sum_{F\in\mathcal F^j(K)}\sum_{\alpha\in A_{j}\atop|\alpha|\leq m-j}\sum_{i=1}^{N_{F,k-(2m-j-|\alpha|)}}\left(\chi_{j,i}^{F,\alpha}\right)^2(v) \\
\lesssim & \sum_{j=1}^{n-1}\sum_{F\in\mathcal F^j(K)}\sum_{\alpha\in A_{j}\atop|\alpha|\leq m-j}\sum_{i=1}^{N_{F,k-(2m-j-|\alpha|)}}h_K^{2|\alpha|-2m+j} \|\nabla^{|\alpha|}v\|_{0,F}^2.
\end{align*}
Due to the trace inequality \eqref{L2trace}, it follows
\begin{align*}
\sum_{\delta\in\mathcal F^{n}(K)}\sum_{i=0}^{m-n}h_K^{n-2m+2i}|\nabla^iv(\delta)|^2 & \lesssim \sum_{e\in\mathcal F^{n-1}(K)}\sum_{i=0}^{m-n}h_K^{n-2m+2i-1}\|\nabla^i v\|_{0,e}^2 \\
&\quad\; + \sum_{e\in\mathcal F^{n-1}(K)}\sum_{i=0}^{m-n}h_K^{n-2m+2i+1}\|\nabla^{i+1} v\|_{0,e}^2.
\end{align*}
Then we have
\[
S_K(v,v)\lesssim \sum_{j=1}^{n-1}\sum_{F\in\mathcal F^j(K)}\sum_{i=0}^{m-j}h_K^{2i-2m+j} \|\nabla^{i}v\|_{0,F}^2.
\]
Hence we acquire \eqref{eq:20190122-1} by applying the trace inequality \eqref{L2trace} recursively.
Finally we conclude \eqref{eq:SKequiv2} from \eqref{eq:20190122-1} and the Poincar\'e inequality \eqref{eq:poincare}.
\end{proof}

We then consider another side of the norm equivalence.
Take an element $K\in\mathcal T_h$.
Define a bubble function
$$
b_\delta(\boldsymbol x):=\prod_{\delta'\in\mathcal{F}^n(K)\backslash\{\delta\}}\frac{(\boldsymbol x_{\delta}-\boldsymbol x_{\delta'})^{\intercal}(\boldsymbol x-\boldsymbol x_{\delta'})}{|\boldsymbol x_{\delta}-\boldsymbol x_{\delta'}|^2},
$$
for each $\delta\in \mathcal F^{n}(K)$. Apparently we have $b_\delta(\delta)=1$, and $b_\delta(\delta')=0$ for each $\delta'\in\mathcal{F}^n(K)\backslash\{\delta\}$.

\begin{lemma}
Assume the mesh $\mathcal T_h$ satisfies condition (A1). Take any $F\in\mathcal{F}_h^j$ with $1\leq j<n$.
The following norm equivalence holds
\begin{equation}\label{eq:polynorm}
\|q\|_{0,F}^2\eqsim  h_F^{n-j}\sum_{i=1}^{N_{F,k}} q_{i}^2\qquad\forall~ q := \sum_{i=1}^{N_{F,k}} q_{i}\XH{\textsf{m}_{F, i}}\in \mathbb P_k(F),
\end{equation}
where $q_{i}\in\mathbb R$.
\end{lemma}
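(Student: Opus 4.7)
I would prove this by a scaling argument. First introduce scaled coordinates $\hat{\boldsymbol x} := (\boldsymbol x - \boldsymbol x_F)/h_F$, mapping $F$ onto an $(n-j)$-dimensional region $\hat F$ of diameter $1$ whose chunkiness parameter remains uniformly bounded by (A1). A change of variables gives
\[
\|q\|_{0,F}^2 = h_F^{n-j}\,\|\hat q\|_{0,\hat F}^2,\qquad \hat q(\hat{\boldsymbol x}) := \sum_{i=1}^{N_{F,k}} q_i\, \hat{\boldsymbol x}^{\alpha_i},
\]
where $\alpha_i$ is the multi-index associated with $\textsf{m}_{F,i}$. It therefore suffices to prove $\|\hat q\|_{0,\hat F}^2 \eqsim \sum_i q_i^2$ with constants independent of $F$.

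The upper bound is routine: since $\hat F \subset \overline{B(0,1)}$ has bounded $(n-j)$-dimensional measure and $|\hat{\boldsymbol x}^{\alpha_i}|\leq 1$ on $\hat F$, the Cauchy--Schwarz inequality gives $|\hat q(\hat{\boldsymbol x})|^2 \leq N_{F,k}\sum_i q_i^2$, which integrates to $\|\hat q\|_{0,\hat F}^2 \lesssim \sum_i q_i^2$.

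For the lower bound, assumption (A1) supplies an $(n-j)$-ball $\hat B(\hat{\boldsymbol x}_0,\rho)\subset \hat F$ with $\rho \gtrsim 1$ and $\hat{\boldsymbol x}_0 \in \overline{B(0,1)}$. I would translate by $\hat{\boldsymbol x}_0$ and re-expand,
\[
\hat q(\hat{\boldsymbol x}) = \sum_i q_i (\boldsymbol y + \hat{\boldsymbol x}_0)^{\alpha_i} = \sum_\beta \tilde q_\beta\, \boldsymbol y^\beta,\qquad \boldsymbol y := \hat{\boldsymbol x} - \hat{\boldsymbol x}_0,\qquad \boldsymbol{\tilde q} = T(\hat{\boldsymbol x}_0)\boldsymbol q,
\]
where the change-of-basis matrix $T(\hat{\boldsymbol x}_0)$ is, in a suitable lex ordering of multi-indices, upper triangular with unit diagonal. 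Restricting to the inscribed ball and applying the substitution $\boldsymbol y = \rho\boldsymbol w$ together with the elementary finite-dimensional norm equivalence $\int_{B(0,1)}|\sum_\beta c_\beta \boldsymbol w^\beta|^2 \, d\boldsymbol w \eqsim \sum_\beta c_\beta^2$ on the unit ball yields
\[
\|\hat q\|_{0,\hat B(\hat{\boldsymbol x}_0,\rho)}^2 \gtrsim \rho^{n-j}\sum_\beta \rho^{2|\beta|}\,\tilde q_\beta^2 \gtrsim \sum_\beta \tilde q_\beta^2,
\]
using $\rho$ in a compact subset of $(0,\infty)$ and $|\beta|\leq k$.

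The main obstacle is to pass from $\sum_\beta \tilde q_\beta^2$ back to $\sum_i q_i^2$ with constants independent of $F$. Because $T(\hat{\boldsymbol x}_0)$ depends continuously on $\hat{\boldsymbol x}_0$ over the compact set $\overline{B(0,1)}$ and is everywhere invertible, $\|T(\hat{\boldsymbol x}_0)^{-1}\|$ is uniformly bounded, giving $\sum_\beta \tilde q_\beta^2 \eqsim \sum_i q_i^2$. Combining this with the inclusion $\hat B(\hat{\boldsymbol x}_0,\rho)\subset \hat F$ and the lower bound above yields the desired equivalence, and unscaling back to $F$ produces the $h_F^{n-j}$ factor.
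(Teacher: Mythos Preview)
Your argument is correct. The paper's proof follows the same overall scheme (scale and invoke a finite-dimensional norm equivalence on a reference domain), but arrives there by a slightly different route: instead of working with the inscribed ball of $\hat F$ and handling the center shift via the change-of-basis matrix $T(\hat{\boldsymbol x}_0)$ and compactness, the paper invokes its Lemma~\ref{lem:polynomialequiv} (proved earlier by Taylor expansion) to get $\|q\|_{0,F}\eqsim\|q\|_{0,F_s}$, where $F_s$ is the regular inscribed simplex of the maximal ball for which $F$ is star-shaped, and then scales that fixed-shape simplex to obtain the monomial-coefficient equivalence. Your proof is more self-contained (it does not rely on Lemma~\ref{lem:polynomialequiv}) and makes the dependence on the centroid shift explicit; the paper's version is shorter because the work of controlling $\|q\|_{0,F}$ by $\|q\|$ on an interior simplex has already been packaged into that earlier lemma. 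Either way the key mechanism---uniform star-shapedness furnishing a reference subdomain of comparable size, plus equivalence of norms on a fixed finite-dimensional space---is the same.
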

\begin{proof}
Noticing that $F_s$ is a simplex, it follows from \eqref{eqn:polynomialequiv}, the scaling argument and the norm equivalence of the finite-dimensional space
\[
\|q\|_{0,F}^2\eqsim \|q\|_{0,F_s}^2\eqsim  h_{F_s}^{n-j}\sum_{i=1}^{N_{F,k}} q_{i}^2,
\]
which gives \eqref{eq:polynorm}.
\end{proof}

\begin{lemma}
Assume the mesh $\mathcal T_h$ satisfies conditions (A1) and (A2).
Let $K\in\mathcal T_h$.
We have for any $v\in V_k(K)$ that
\begin{align}
h_K^m\|(-\Delta)^mv\|_{0,K} + \sum_{\delta\in\mathcal{F}^n(K)}\sum_{\alpha\in A_{n}\atop|\alpha|\leq m-n}h_K^{m-|\alpha|-n/2}|D^{2m-n-|\alpha|}_{\delta,\alpha}(v)|& \notag\\
 + \sum_{j=1}^{n-1}\sum_{F\in\mathcal{F}^j(K)}\sum_{\alpha\in A_{j}\atop|\alpha|\leq m-j}h_K^{m-|\alpha|-j/2}\|D^{2m-j-|\alpha|}_{F, \alpha}(v)\|_{0,F}&\lesssim \|\nabla^mv\|_{0,K}. \label{eq:20190221-4}
\end{align}
\end{lemma}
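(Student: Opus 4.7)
The plan is to bound each term on the left-hand side separately by the bubble function technique, testing the generalized Green's identity \eqref{eq:HmGreen} against a carefully chosen function $\phi\in H_0^m(K)$ whose boundary data isolates exactly one contribution. Because $v\in V_k(K)$, each of $(-\Delta)^mv$, $D^{2m-j-|\alpha|}_{F,\alpha}(v)$ and $D^{2m-n-|\alpha|}_{\delta,\alpha}(v)$ is a polynomial on its respective domain, so after multiplication by a suitable bubble the polynomial norm equivalences \eqref{eqn:polynomialequiv}--\eqref{eq:polynorm} give a lower bound of the form $(p,b\,p)\gtrsim\|p\|_{0}^2$, while the inverse inequality \eqref{eq:polyinverse} controls the $H^m$ norm of the test function by its $L^2$ norm.

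First I would handle the interior term. Set $w:=(-\Delta)^mv\in\mathbb P_{k-2m}(K)$ and take $\phi:=b_K^m w$, where $b_K$ is a polynomial bubble on $K$ vanishing to order $m$ on $\partial K$. Then $\phi\in H_0^m(K)$, so every boundary contribution in \eqref{eq:HmGreen} drops and
\[
(\nabla^mv,\nabla^m\phi)_K=(w,b_K^m w)_K\gtrsim \|w\|_{0,K}^2.
\]
Combining Cauchy--Schwarz with $\|\nabla^m\phi\|_{0,K}\lesssim h_K^{-m}\|\phi\|_{0,K}\lesssim h_K^{-m}\|w\|_{0,K}$ yields the desired $h_K^m\|w\|_{0,K}\lesssim \|\nabla^mv\|_{0,K}$.

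Next I would attack the face and vertex terms by induction on codimension, from $j=n$ down to $j=1$. Fix $F\in\mathcal F^j(K)$ and $\alpha\in A_j$ with $|\alpha|\leq m-j$, and set $p:=D^{2m-j-|\alpha|}_{F,\alpha}(v)$. Using a bubble $b_F$ that vanishes on $\partial F$ (for a vertex $\delta$ this is the function $b_\delta$ already introduced, multiplied by a bubble vanishing on the remaining vertices), together with an extension off $F$ built from the monomial $\nu_{F,1}^{\alpha_1}\cdots\nu_{F,j}^{\alpha_j}/\alpha!$ in the normal coordinates, I would construct a test function $\phi_{F,\alpha}\in H_0^m(K)$ with $\partial^{|\alpha|}\phi_{F,\alpha}/\partial\nu_F^{\alpha}=b_F\,p$ on $F$, and with every other boundary term in \eqref{eq:HmGreen} either vanishing or involving a sub-face of strictly greater codimension than $F$, which is already controlled by the induction hypothesis. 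Substituting $\phi_{F,\alpha}$ into \eqref{eq:HmGreen} and absorbing the controlled contributions leaves
\[
(p,b_F p)_F\lesssim \|\nabla^mv\|_{0,K}\,\|\nabla^m\phi_{F,\alpha}\|_{0,K}+(\text{already bounded}).
\]
The scaling $\|\nabla^m\phi_{F,\alpha}\|_{0,K}\lesssim h_K^{|\alpha|-m+j/2}\|p\|_{0,F}$, obtained from \eqref{eq:polyinverse} together with ${\rm diam}(F)\eqsim h_K$, then produces the claimed weight $h_K^{m-|\alpha|-j/2}$; the vertex version is identical with $h_K^{m-|\alpha|-n/2}$.

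The main obstacle is the construction of $\phi_{F,\alpha}$ for $j\geq 2$ and $|\alpha|\geq 1$: one must simultaneously prescribe the multi-indexed normal derivative along $F$ to be $b_F\,p$ while annihilating all lower-order normal derivatives across $\partial K$ and all higher-order contributions except the tested one, so that \eqref{eq:HmGreen} genuinely singles out a unique term modulo sub-face contributions of strictly larger codimension. This is accomplished by multiplying $b_F\,p$ by $\prod_i \nu_{F,i}^{\alpha_i}/\alpha!$ and then by an auxiliary higher-order bubble that decays sufficiently quickly on the neighbouring faces of $K$; the mesh conditions (A1)--(A2) guarantee the clean scaling needed to absorb sub-face contributions and to apply the inverse inequality uniformly. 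Summing the resulting bounds over $F$, $\alpha$, $\delta$ then produces \eqref{eq:20190221-4}.
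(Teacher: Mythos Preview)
Your treatment of the interior term is sound in spirit, but the strategy for the face and vertex contributions has a structural flaw. You claim to build $\phi_{F,\alpha}\in H_0^m(K)$ with $\partial^{|\alpha|}\phi_{F,\alpha}/\partial\nu_F^{\alpha}=b_F\,p$ on $F$. These two requirements are incompatible: membership in $H_0^m(K)$ forces every derivative of order $\leq m-1$ to vanish on $\partial K$, and since $F\subset\partial K$ and $|\alpha|\leq m-j\leq m-1$, the prescribed normal derivative on $F$ must itself be zero. No auxiliary cutoff can rescue this---any bubble that kills the traces on $\partial K$ also kills them on $F$. Relatedly, your induction runs from $j=n$ down to $j=1$, so at the vertex step no face estimates are yet available; but any test function with a nonzero $|\alpha|$-th derivative at a vertex $\delta$ inevitably contributes nonvanishing terms on the $(n-1)$-faces through $\delta$ in \eqref{eq:HmGreen}, and these must already be controlled before they can be absorbed.

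The paper proceeds in the opposite order. The interior bound and the face bounds for $j=1,\dots,n-1$ are established first (by the argument of Lemmas~A.1--A.2 in \cite{ChenHuang2019}), giving \eqref{eq:20190221-5}. For each vertex $\delta$ and each $\alpha$ one then takes the \emph{polynomial} test function $\phi_\delta=(\alpha!)^{-1} b_\delta^{2m}\,D^{2m-n-|\alpha|}_{\delta,\alpha}(v)\prod_i(\nu_{\delta,i}^{\intercal}(\boldsymbol x-\boldsymbol x_\delta))^{\alpha_i}$, which is \emph{not} in $H_0^m(K)$: it vanishes to sufficient order at the other vertices but not on the $(n-1)$-faces. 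Substituting into \eqref{eq:HmGreen} therefore leaves the face sums nonzero, and these are absorbed using the already-proved \eqref{eq:20190221-5}. At $\delta$ the construction annihilates all $\beta$ with $|\beta|<|\alpha|$ and all $\beta\neq\alpha$ with $|\beta|=|\alpha|$, but not those with $|\beta|>|\alpha|$; the residual vertex terms are then removed by a secondary descending recursion on $|\alpha|$. Throughout, the test functions are polynomials, so the inverse inequality \eqref{eq:polyinverse} applies directly without any smoothing.
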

\begin{proof}
Adopting the same argument as in the proof of Lemmas~A.1-A.2 in \cite{ChenHuang2019}, we get
\begin{align}
\sum_{j=1}^{n-1}\sum_{F\in\mathcal{F}^j(K)}\sum_{\alpha\in A_{j}\atop|\alpha|\leq m-j}h_K^{m-|\alpha|-j/2}\|D^{2m-j-|\alpha|}_{F, \alpha}(v)\|_{0,F}& \notag\\
 +h_K^m\|(-\Delta)^mv\|_{0,K} &\lesssim \|\nabla^mv\|_{0,K}. \label{eq:20190221-5}
\end{align}
Now consider $\delta\in\mathcal{F}^n(K)$ and $\alpha\in A_{n}$ with $|\alpha|\leq m-n$.
Notice that $D^{2m-n-|\alpha|}_{\delta,\alpha}(v)$ is a constant, which can be regarded as the constant function in $\mathbb R^n$.
Let
\[
\phi_{\delta}(\bs x):=\frac{1}{\alpha!}b_{\delta}^{2m}D^{2m-n-|\alpha|}_{\delta,\alpha}(v)\prod\limits_{i=1}^n(\nu_{\delta,i}^{\intercal}(\boldsymbol x-\boldsymbol x_{\delta}))^{\alpha_i},
\]
where $\alpha!=\alpha_1!\cdots\alpha_n!$,
then we have
\begin{equation}\label{eq:20181025-2}
\|\phi_{\delta}\|_{0,K}\lesssim h_{K}^{|\alpha|+n/2}|D^{2m-n-|\alpha|}_{\delta,\alpha}(v)|,
\end{equation}
\[
\frac{\partial^{|\alpha|}\phi_{\delta}}{\partial\nu_{\delta}^{\alpha}}(\delta)=(b_{\delta}(\delta))^{2m}D^{2m-n-|\alpha|}_{\delta,\alpha}(v)=D^{2m-n-|\alpha|}_{\delta,\alpha}(v).
\]
Hence
\begin{equation}\label{eq:20181025-1}
\left|D^{2m-n-|\alpha|}_{\delta,\alpha}(v)\right|^2= D^{2m-n-|\alpha|}_{\delta,\alpha}(v)\frac{\partial^{|\alpha|}\phi_{\delta}}{\partial\nu_{\delta}^{\alpha}}(\delta).
\end{equation}
Noticing that $b_{\delta}(\delta')=0$ for all $\delta'\in\mathcal{F}^n(K)\backslash\{\delta\}$, we have for each $\delta'\in\mathcal{F}^n(K)\backslash\{\delta\}$
$$
\frac{\partial^{|\beta|}\phi_{\delta}}{\partial\nu_{\delta'}^{\beta}}(\delta')=0\quad\forall~\beta\in A_{n} \;\textrm{ with }\; |\beta|\leq m-n.
$$
For any $\beta\in A_{n}$, $|\beta|<|\alpha|$, since $\displaystyle\frac{\partial^{|\beta|}}{\partial\nu_{\delta}^{\beta}}\bigg(\prod\limits_{i=1}^n(\nu_{\delta,i}^{\intercal}(\boldsymbol x-\boldsymbol x_{\delta}))^{\alpha_i}\bigg)(\delta)=0$, it yields
$\displaystyle
\frac{\partial^{|\beta|}\phi_{\delta}}{\partial\nu_{\delta}^{\beta}}(\delta)=0.
$
For any $\beta\in A_{n}$, $|\beta|=|\alpha|$, but $\beta\neq\alpha$, noting that $\displaystyle\frac{\partial(\nu_{\delta,i}^{\intercal}(\boldsymbol x-\boldsymbol x_{\delta}))}{\partial\nu_{\delta,\ell}}=0$ for $i\neq\ell$, we also have
$\displaystyle
\frac{\partial^{|\beta|}\phi_{\delta}}{\partial\nu_{\delta}^{\beta}}(\delta)=0.
$
Based on the previous discussion, we obtain from \eqref{eq:20181025-1}, the generalized Green's identity \eqref{eq:HmGreen} and the density argument
\begin{align*}
\left|D^{2m-n-|\alpha|}_{\delta,\alpha}(v)\right|^2&=(\nabla^mv, \nabla^m\phi_{\delta})_K-((-\Delta)^mv, \phi_{\delta})_K \\
&\quad- \sum_{j=1}^{n-1}\sum_{F\in\mathcal F^j(K)}\sum_{\beta\in A_{j}\atop|\beta|\leq m-j}\Big (D^{2m-j-|\beta|}_{F, \beta}(v), \frac{\partial^{|\beta|}\phi_{\delta}}{\partial\nu_{F}^{\beta}}\Big )_F\\
&\quad- \sum_{\beta\in A_{n}\atop|\alpha|<|\beta|\leq m-n}D^{2m-n-|\beta|}_{\delta, \beta}(v) \frac{\partial^{|\beta|}\phi_{\delta}}{\partial\nu_{\delta}^{\beta}}(\delta).
\end{align*}
Employing the Cauchy-Schwarz inequality, the inverse inequality \eqref{eq:polyinverse} and \eqref{eqn:polynomialequiv}, it follows
\begin{align*}
&\left|D^{2m-n-|\alpha|}_{\delta,\alpha}(v)\right|^2\\
\lesssim &h_K^{-m}\|\nabla^mv\|_{0,K}\|\phi_{\delta}\|_{0,K} + \|(-\Delta)^mv\|_{0,K}\|\phi_{\delta}\|_{0,K}\\
&+ \sum_{j=1}^{n-1}\sum_{F\in\mathcal F^j(K)}\sum_{\beta\in A_{j}\atop|\beta|\leq m-j}h_K^{-|\beta|-j/2}\Big \|D^{2m-j-|\beta|}_{F, \beta}(v)\Big\|_{0,F}\|\phi_{\delta}\|_{0,K} \\
&+ \sum_{\beta\in A_{n}\atop|\alpha|<|\beta|\leq m-n}h_K^{-|\beta|-n/2}\left|D^{2m-n-|\beta|}_{\delta,\beta}(v)\right|\|\phi_{\delta}\|_{0,K},
\end{align*}
which combined with \eqref{eq:20181025-2} yields
\begin{align*}
h_K^{m-|\alpha|-n/2}\left|D^{2m-n-|\alpha|}_{\delta,\alpha}(v)\right|
\lesssim &\|\nabla^mv\|_{0,K} + h_K^{m}\|(-\Delta)^mv\|_{0,K}\\
&+ \sum_{j=1}^{n-1}\sum_{F\in\mathcal F^j(K)}\sum_{\beta\in A_{j}\atop|\beta|\leq m-j}h_K^{m-|\beta|-j/2}\Big \|D^{2m-j-|\beta|}_{F, \beta}(v)\Big\|_{0,F} \\
&+ \sum_{\beta\in A_{n}\atop|\alpha|<|\beta|\leq m-n}h_K^{m-|\beta|-n/2}\left|D^{2m-n-|\beta|}_{\delta,\beta}(v)\right|.
\end{align*}
Then we get from \eqref{eq:20190221-5}
\[
h_K^{m-|\alpha|-n/2}\left|D^{2m-n-|\alpha|}_{\delta,\alpha}(v)\right|
\lesssim \|\nabla^mv\|_{0,K} + \sum_{\beta\in A_{n}\atop|\alpha|<|\beta|\leq m-n}h_K^{m-|\beta|-n/2}\left|D^{2m-n-|\beta|}_{\delta,\beta}(v)\right|.
\]
Finally applying this inequality recursively gives
\[
h_K^{m-|\alpha|-n/2}\left|D^{2m-n-|\alpha|}_{\delta,\alpha}(v)\right|
\lesssim \|\nabla^mv\|_{0,K},
\]
which together with \eqref{eq:20190221-5} implies \eqref{eq:20190221-4}.
\end{proof}

\begin{lemma}\label{lemma:20181113-1}
Assume the mesh $\mathcal T_h$ satisfies condition (A1) and $K\in\mathcal T_h$.
For any $q\in \mathbb P_{k-2m}(K)$, there exists $p\in\mathbb P_{k}(K)$ satisfying
\[
(-\Delta)^mp=q, \quad\textrm{ and }\quad \|\nabla^{i}p\|_{0,K}\lesssim h_K^{2m-i}\|q\|_{0,K}
\]
for any nonnegative integer $i$.
\end{lemma}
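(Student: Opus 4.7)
The strategy is to reduce to the regular inscribed simplex $K_s\subset B_K\subset K$ and then bridge back to $K$ using the polynomial norm equivalence of Lemma~\ref{lem:polynomialequiv}. The underlying algebraic fact I will invoke is that $(-\Delta)^m:\mathbb P_k(\mathbb R^n)\to\mathbb P_{k-2m}(\mathbb R^n)$ is surjective; this is classical, following either by iterating the elementary surjectivity of $\Delta:\mathbb P_d\to\mathbb P_{d-2}$ or, equivalently, from the Almansi decomposition which gives $\dim\ker((-\Delta)^m|_{\mathbb P_k})=\dim\mathbb P_k-\dim\mathbb P_{k-2m}$. The real content of the lemma is therefore the quantitative $h_K$-scaling of a right inverse, not its existence.

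First I would fix a unit reference regular simplex $\hat K$. Since $(-\Delta)^m:\mathbb P_k(\hat K)\to\mathbb P_{k-2m}(\hat K)$ is a surjective linear map between finite-dimensional spaces, it admits a bounded linear right inverse $\hat R$ with
\[
\|\nabla^i(\hat R\hat q)\|_{0,\hat K}\lesssim \|\hat q\|_{0,\hat K}\qquad\forall~\hat q\in\mathbb P_{k-2m}(\hat K),\ i\geq 0,
\]
the constant depending only on $k$, $m$, $n$. Let $\Phi:\hat K\to K_s$ be the affine similarity of ratio $h_{K_s}\eqsim h_K$. For $q\in\mathbb P_{k-2m}(K)$, I set $\hat q:=h_{K_s}^{2m}(q\circ\Phi)\in\mathbb P_{k-2m}(\hat K)$ and define $p\in\mathbb P_k(\mathbb R^n)$ by $p\circ\Phi:=\hat R\hat q$. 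The standard chain rule $(-\Delta)^m p=h_{K_s}^{-2m}((-\Delta)^m(\hat R\hat q))\circ\Phi^{-1}$ together with a change of variables in the $L^2$ norm then gives $(-\Delta)^m p=q$ on $K_s$ and
\[
\|\nabla^i p\|_{0,K_s}\lesssim h_K^{2m-i}\|q\|_{0,K_s}.
\]

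Because $(-\Delta)^m p-q$ is a polynomial that vanishes on the open set $K_s$, it vanishes identically, so $(-\Delta)^m p=q$ holds on all of $K$. Finally, applying Lemma~\ref{lem:polynomialequiv} componentwise to $\nabla^i p\in\mathbb P_{k-i}(K;\mathbb T_n(i))$ and to $q\in\mathbb P_{k-2m}(K)$ yields
\[
\|\nabla^i p\|_{0,K}\eqsim\|\nabla^i p\|_{0,K_s}\lesssim h_K^{2m-i}\|q\|_{0,K_s}\lesssim h_K^{2m-i}\|q\|_{0,K},
\]
which is the advertised estimate.

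The only step with any subtlety is the surjectivity of $(-\Delta)^m$ on polynomial spaces, which is classical and independent of the geometry of $K$. Everything else is bookkeeping: $K$ itself has no scale invariance (it is only assumed star-shaped with bounded chunkiness), so every quantitative estimate must be routed through the regular simplex $K_s$, and Lemma~\ref{lem:polynomialequiv} is precisely the tool that absorbs the mismatch between the two shapes at the cost of a constant depending only on the chunkiness parameter.
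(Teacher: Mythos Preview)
Your proof is correct and follows essentially the same route as the paper: both arguments invoke the surjectivity of $(-\Delta)^m$ on polynomial spaces, construct $p$ on the regular inscribed simplex $K_s$ with a scaling argument, extend the identity $(-\Delta)^m p=q$ to $K$ by analyticity of polynomials, and then transfer the norm bounds to $K$ via Lemma~\ref{lem:polynomialequiv}. The only cosmetic difference is that you obtain $\|\nabla^i p\|_{0,K_s}\lesssim h_K^{2m-i}\|q\|_{0,K_s}$ directly by scaling each derivative from the reference simplex, whereas the paper first bounds $\|p\|_{0,K_s}$ and then applies the inverse inequality~\eqref{eq:polyinverse} on $K$ to pass to higher derivatives; both are equally valid.
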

\begin{proof}
Since $\Delta: \mathbb P_{\ell+2}(K_s)\to\mathbb P_{\ell}(K_s)$ is surjective for any nonnegative integer $\ell$, the operator $(-\Delta)^m: \mathbb P_{k}(K_s)\to\mathbb P_{k-2m}(K_s)$ is surjective. Thus the operator $(-\Delta)^m: \mathbb P_{k}(K_s)/\ker((-\Delta)^m)\to\mathbb P_{k-2m}(K_s)$ is an isomorphism. Then there exists $p\in\mathbb P_{k}(K_s)$ such that
\begin{equation}\label{eqn:20181113-1}
(-\Delta)^mp=q|_{K_s},
\end{equation}
and by the scaling argument,
\[
\|p\|_{0,K_s}\lesssim h_K^{2m}\|q\|_{0,K_s}.
\]
Notice that $p\in\mathbb P_{k}(K_s)$ is spontaneously regarded as a polynomial in $\mathbb P_{k}(K)$.
Applying the inverse inequality~\eqref{eq:polyinverse} and \eqref{eqn:polynomialequiv}, we have
\[
\|\nabla^{i}p\|_{0,K}\lesssim h_K^{-i}\|p\|_{0,K}\lesssim h_K^{-i}\|p\|_{0,K_s}\lesssim h_K^{2m-i}\|q\|_{0,K_s}\leq h_K^{2m-i}\|q\|_{0,K}.
\]
The identity \eqref{eqn:20181113-1} implies $\left((-\Delta)^mp-q\right)|_{K_s}=0$, which together with the fact $(-\Delta)^mp-q\in\mathbb P_{k-2m}(K)$ ends the proof.
\end{proof}

\begin{lemma}\label{lemma:SKequivVK}
Assume the mesh $\mathcal T_h$ satisfies conditions (A1) and (A2).
For any $K\in\mathcal T_h$, it holds
\begin{equation}\label{eq:SKequivVK}
\|\nabla^m v\|_{0,K}^2\lesssim S_K(v,v)\quad\forall~v\in V_k(K)\cap\ker(\Pi^K).
\end{equation}
\end{lemma}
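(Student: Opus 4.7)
The plan is to apply the generalized Green's identity \eqref{eq:HmGreen} with $u=w=v$ to split
\[
\|\nabla^m v\|_{0,K}^2 = ((-\Delta)^m v, v)_K + \mathcal B(v,v),
\]
where $\mathcal B(w,v)$ abbreviates the collection of all face and vertex terms on the right-hand side of \eqref{eq:HmGreen} with $w$ playing the role of $u$. I would then bound each summand on the right by $\|\nabla^m v\|_{0,K}\, S_K(v,v)^{1/2}$ and absorb one factor of $\|\nabla^m v\|_{0,K}$ into the left-hand side.

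For each face term $(D^{2m-j-|\alpha|}_{F,\alpha}(v),\partial^{|\alpha|} v/\partial\nu_F^{\alpha})_F$ in $\mathcal B(v,v)$, the definition of $V_k(K)$ forces $D^{2m-j-|\alpha|}_{F,\alpha}(v)\in\mathbb P_{k-(2m-j-|\alpha|)}(F)$; thus $\partial^{|\alpha|} v/\partial\nu_F^{\alpha}$ may be replaced in the integral by its $L^2(F)$-projection onto this space, which is fully encoded by the DOFs $\chi^{F,\alpha}_{j,i}(v)$. Expanding $D^{2m-j-|\alpha|}_{F,\alpha}(v)$ in the basis $\mathbb M_{k-(2m-j-|\alpha|)}(F)$ and combining the polynomial norm equivalence \eqref{eq:polynorm}, a Cauchy-Schwarz in $\mathbb R^{N_{F,k-(2m-j-|\alpha|)}}$, the scaling factor in the definition of $\chi^{F,\alpha}_{j,i}$, and the bound \eqref{eq:20190221-4} on $\|D^{2m-j-|\alpha|}_{F,\alpha}(v)\|_{0,F}$, this term is controlled by $\|\nabla^m v\|_{0,K}\bigl(h_K^{n-2m}\sum_i(\chi^{F,\alpha}_{j,i}(v))^2\bigr)^{1/2}$. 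The vertex contributions $D^{2m-n-|\alpha|}_{\delta,\alpha}(v)\,\partial^{|\alpha|} v/\partial\nu_\delta^{\alpha}(\delta)$ are handled by a direct Cauchy-Schwarz, with the first factor controlled by \eqref{eq:20190221-4} and the second identified (up to a suitable power of $h_K$) as a vertex DOF contribution to $S_K(v,v)$. Summing then gives $|\mathcal B(v,v)|\lesssim\|\nabla^m v\|_{0,K}\,S_K(v,v)^{1/2}$.

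For the volume term, since $(-\Delta)^m v\in\mathbb P_{k-2m}(K)$, Lemma~\ref{lemma:20181113-1} supplies $p\in\mathbb P_k(K)$ with $(-\Delta)^m p=(-\Delta)^m v$ and $\|\nabla^i p\|_{0,K}\lesssim h_K^{2m-i}\|(-\Delta)^m v\|_{0,K}$ for every $i\ge 0$, which combined with \eqref{eq:20190221-4} gives $\|\nabla^i p\|_{0,K}\lesssim h_K^{m-i}\|\nabla^m v\|_{0,K}$. Applying \eqref{eq:HmGreen} with $u=p$, $w=v$, together with the orthogonality
\[
(\nabla^m v,\nabla^m p)_K = (\nabla^m\Pi^K v,\nabla^m p)_K = 0
\]
(from the definition \eqref{eq:H2projlocal1} and $\Pi^K v = 0$), yields $((-\Delta)^m v,v)_K = -\mathcal B(p,v)$. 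The term $\mathcal B(p,v)$ is then estimated in exactly the same manner as $\mathcal B(v,v)$: since $p\in\mathbb P_k(K)$ the factors $D^{2m-j-|\alpha|}_{F,\alpha}(p)$ and $D^{2m-n-|\alpha|}_{\delta,\alpha}(p)$ are automatically polynomials of the required degrees, their norms controlled by iterated polynomial trace and $L^\infty$-inverse estimates together with the bound on $\|\nabla^i p\|_{0,K}$; the $\partial^{|\alpha|}v/\partial\nu^{\alpha}$ factors are once again read off the DOFs feeding $S_K(v,v)$.

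Collecting the two estimates gives $\|\nabla^m v\|_{0,K}^2\lesssim \|\nabla^m v\|_{0,K}\, S_K(v,v)^{1/2}$, whence \eqref{eq:SKequivVK}. I expect the main obstacle to be the scaling bookkeeping in bounding $\mathcal B(p,v)$: one must iteratively trace the polynomial $p$ from the $n$-dimensional cell $K$ down to an $(n-j)$-dimensional face $F$ (each trace costing a factor of $h_K^{-1/2}$) and ultimately to a vertex, and then align the resulting powers of $h_K$ with the $h_K^{n-2m}$ weight in $S_K$ and the face/vertex DOF normalizations so that exactly the exponents required for absorption emerge.
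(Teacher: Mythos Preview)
Your approach is correct and is essentially the paper's proof, just organised a bit differently. The paper subtracts $p$ from $v$ \emph{before} invoking the Green's identity: using $\Pi^K v=0$ it writes $\|\nabla^m v\|_{0,K}^2=(\nabla^m(v-p),\nabla^m v)_K$, and then a single application of \eqref{eq:HmGreen} to $u=v-p$ kills the volume term outright (since $(-\Delta)^m(v-p)=0$) and leaves exactly $\mathcal B(v-p,v)$. Your two-step split $\mathcal B(v,v)-\mathcal B(p,v)$ recovers the same expression by linearity.

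The payoff of the paper's ordering is that it bypasses the scaling bookkeeping you flag at the end: since $p\in\mathbb P_k(K)\subset V_k(K)$, one has $v-p\in V_k(K)$, so estimate \eqref{eq:20190221-4} applies verbatim to $v-p$ and delivers the required bounds on $\|D^{2m-j-|\alpha|}_{F,\alpha}(v-p)\|_{0,F}$ and $|D^{2m-n-|\alpha|}_{\delta,\alpha}(v-p)|$ in one stroke, with $\|\nabla^m(v-p)\|_{0,K}\lesssim\|\nabla^m v\|_{0,K}$ by \eqref{eq:20190222-1}. You do not need iterated trace or $L^\infty$-inverse estimates for $p$ separately; even within your organisation you may simply invoke \eqref{eq:20190221-4} for $p$ (it lies in $V_k(K)$) and be done.
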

\begin{proof}
Employing Lemma~\ref{lemma:20181113-1}, there exists $p\in\mathbb P_{k}(K)$ satisfying
\begin{equation}\label{eqn:20181113-2}
(-\Delta)^mp=(-\Delta)^mv,
\end{equation}
\begin{equation}\label{eq:20190222-1}
\|\nabla^{m}p\|_{0,K}\lesssim h_K^{m}\|(-\Delta)^mv\|_{0,K}\lesssim \|\nabla^mv\|_{0,K},
\end{equation}
in which we have used \eqref{eq:20190221-4}.
By the definition of $\Pi^K$, it follows
\[
\|\nabla^m v\|_{0,K}^2 =(\nabla^mv, \nabla^mv)_K =(\nabla^m(v-p), \nabla^mv)_K.
\]
And exploiting the generalized Green's identity \eqref{eq:HmGreen} and \eqref{eqn:20181113-2}, we have
\begin{align}
\|\nabla^m v\|_{0,K}^2 = &\sum_{j=1}^{n-1}\sum_{F\in\mathcal F^j(K)}\sum_{\alpha\in A_{j}\atop|\alpha|\leq m-j}\Big (D^{2m-j-|\alpha|}_{F, \alpha}(v-p), \frac{\partial^{|\alpha|}v}{\partial\nu_{F}^{\alpha}}\Big )_F \notag\\
&+ \sum_{\delta\in\mathcal F^{n}(K)}\sum_{\alpha\in A_{n}\atop|\alpha|\leq m-n}D^{2m-n-|\alpha|}_{\delta,\alpha}(v-p)\frac{\partial^{|\alpha|}v}{\partial\nu_{\delta}^{\alpha}}(\delta).\label{eq:20181025-4}
\end{align}
Since $v\in V_k(K)$, we have $D^{2m-j-|\alpha|}_{F, \alpha}(v-p)|_F\in\mathbb P_{k-(2m-j-|\alpha|)}(F)$ for any $F\in\mathcal F^{j}(K)$.
Then there exist constants $c_i$, $i=1,\cdots, N_{F, k-(2m-j-|\alpha|)}$ such that
$$
\Big (D^{2m-j-|\alpha|}_{F, \alpha}(v-p), \frac{\partial^{|\alpha|}v}{\partial\nu_{F}^{\alpha}}\Big )_F=h_K^{n-j-|\alpha|}\sum_{i=1}^{N_{F, k-(2m-j-|\alpha|)}}c_i\chi_{j,i}^{F,\alpha}(v)
$$
Applying the norm equivalence on the polynomial space $\mathbb P_{k-(2m-j-|\alpha|)}(F)$, cf. \eqref{eq:polynorm}, we get
$$
\|D^{2m-j-|\alpha|}_{F, \alpha}(v-p)\|_{0,F}^2\eqsim h_K^{n-j}\sum_{i=1}^{N_{F, k-(2m-j-|\alpha|)}}c_i^2.
$$
Hence it follows from \eqref{eq:20190221-4} and \eqref{eq:20190222-1}
\begin{align*}
&\Big (D^{2m-j-|\alpha|}_{F, \alpha}(v-p), \frac{\partial^{|\alpha|}v}{\partial\nu_{F}^{\alpha}}\Big )_F\\
\lesssim &h_K^{(n-j)/2-|\alpha|}\|D^{2m-j-|\alpha|}_{F, \alpha}(v-p)\|_{0,F}\sqrt{\sum_{i=1}^{N_{F,k-(2m-j-|\alpha|)}}\left(\chi_{j,i}^{F,\alpha}\right)^2(v)} \\
\lesssim&  \|\nabla^m(v-p)\|_{0,K}\sqrt{S_K(v,v)}\lesssim\|\nabla^mv\|_{0,K}\sqrt{S_K(v,v)}.
\end{align*}
Applying \eqref{eq:20190221-4} and \eqref{eq:20190222-1} again, it holds for each $\delta\in\mathcal F^{n}(K)$, and $\alpha\in A_{n}$ with $|\alpha|\leq m-n$
\[
D^{2m-n-|\alpha|}_{\delta,\alpha}(v-p)\frac{\partial^{|\alpha|}v}{\partial\nu_{\delta}^{\alpha}}(\delta)\lesssim \|\nabla^m(v-p)\|_{0,K}\sqrt{S_K(v,v)}\lesssim \|\nabla^mv\|_{0,K}\sqrt{S_K(v,v)}.
\]
Therefore we conclude \eqref{eq:SKequivVK} from \eqref{eq:20181025-4} and the last two inequalities.
\end{proof}

\XH{
\begin{remark}\rm
The reason of the stabilization term $S_K(\cdot,\cdot)$ only involving the boundary degrees of freedom is that
the operator $(-\Delta)^m: \mathbb P_{k}(K)\to\mathbb P_{k-2m}(K)$ is onto and has a continuous right inverse (cf. Lemma~\ref{lemma:20181113-1}).
\end{remark}}


At last, combining \eqref{eq:SKequiv2} and \eqref{eq:SKequivVK} gives the norm equivalence \eqref{eq:SKequiv}.
\begin{theorem}
Assume the mesh $\mathcal T_h$ satisfies conditions (A1) and (A2). For any $K\in\mathcal T_h$, the following norm equivalence holds
\begin{equation}\label{eq:SKequiv}
S_K(v,v)\eqsim \|\nabla^m v\|_{0,K}^2\quad\forall~v\in V_k(K)\cap\ker(\Pi^K),
\end{equation}
where the constant is independent of $h_K$, but may depend on the chunkiness parameter $\rho_K$, the degree of polynomials $k$, the order of differentiation $m$, the dimension of space $n$, and the shape regularity and quasi-uniform constants of the virtual triangulation $\mathcal T^*_h$.
\end{theorem}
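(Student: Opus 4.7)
My plan is straightforward: combine the two inequalities that the preceding lemmas already provide on $V_k(K)\cap\ker(\Pi^K)$. The upper bound $S_K(v,v)\lesssim \|\nabla^m v\|_{0,K}^2$ is \eqref{eq:SKequiv2}, and the matching lower bound $\|\nabla^m v\|_{0,K}^2 \lesssim S_K(v,v)$ is \eqref{eq:SKequivVK} from Lemma~\ref{lemma:SKequivVK}. Chaining the two directly yields \eqref{eq:SKequiv}, and the dependence of the hidden constants on the chunkiness parameter, on $k$, $m$, $n$, and on the shape regularity and quasi-uniform constants of $\mathcal T_h^*$ is inherited from the two contributing lemmas. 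No further manipulation is required at this stage beyond recording this combination.

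If I were to design such a proof from scratch, the upper bound would be the easier half. I would bound each boundary quantity in $S_K$ by an $L^2$-norm on a face using the polynomial norm equivalence \eqref{eq:polynorm}, pass from face $L^2$-norms to bulk $L^2$-norms of derivatives by recursive application of the trace inequality \eqref{L2trace}, and then absorb the resulting lower-order bulk seminorms into $\|\nabla^m v\|_{0,K}$ via the kernel Poincar\'e inequality \eqref{eq:poincare} that mesh conditions (A1)--(A2) guarantee.

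The main obstacle will be the lower bound. Since $S_K$ contains no interior degrees of freedom, one cannot directly dominate $\|\nabla^m v\|_{0,K}$ by $S_K(v,v)$ without bringing the polynomial structure of $V_k(K)$ into play. The strategy I would follow is the one executed in Lemma~\ref{lemma:SKequivVK}: use Lemma~\ref{lemma:20181113-1} to produce a polynomial $p\in\mathbb P_k(K)$ with $(-\Delta)^m p=(-\Delta)^m v$ and $\|\nabla^m p\|_{0,K}\lesssim \|\nabla^m v\|_{0,K}$; then exploit $\Pi^K v=0$ and the generalized Green's identity \eqref{eq:HmGreen} to collapse $\|\nabla^m v\|_{0,K}^2=(\nabla^m(v-p),\nabla^m v)_K$ into a sum of purely boundary pairings; and finally bound each pairing by $\|\nabla^m v\|_{0,K}\sqrt{S_K(v,v)}$ using the polynomial norm equivalence \eqref{eq:polynorm} together with the bubble-function-driven estimate \eqref{eq:20190221-4}. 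Division by one factor of $\|\nabla^m v\|_{0,K}$ closes the argument. The crux is the existence of a continuous right inverse of $(-\Delta)^m$ on the polynomial space $\mathbb P_k(K)$; without it, one would be forced to enlarge $S_K$ with interior degrees of freedom as in \cite{ChenHuang2019}.
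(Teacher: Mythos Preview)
Your proposal is correct and matches the paper's own proof exactly: the theorem is obtained by combining the upper bound \eqref{eq:SKequiv2} with the lower bound \eqref{eq:SKequivVK}, and the hidden constants inherit the stated dependencies from those lemmas. Your additional commentary on how each half is established also accurately reflects the arguments carried out in the preceding lemmas.
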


From now on, we always assume the mesh $\mathcal T_h$ satisfies conditions (A1) and (A2).
By the Cauchy-Schwarz inequality and the norm equivalence~\eqref{eq:SKequiv}, we have
\begin{equation}\label{eq:SKbound}
S_K(w, v)\lesssim  |w|_{m, K}|v|_{m, K}\quad\forall~w, v\in V_k(K)\cap\ker(\Pi^K).
\end{equation}
which implies the continuity of $a_h(\cdot,\cdot)$
\begin{equation}\label{eq:ahbound}
a_h(w_h, v_h)\lesssim  |w_h|_{m, h}|v_h|_{m, h}\quad\forall~w_h, v_h\in V_h+\mathbb P_k(\mathcal T_h).
\end{equation}
Next we verify the coercivity of $a_h(\cdot,\cdot)$.
\begin{lemma}
For any $v_h\in V_h+\mathbb P_k(\mathcal T_h)$, it holds
\begin{equation}\label{eq:ahcoercivity}
|v_h|_{m,h}^2\lesssim a_h(v_h, v_h).
\end{equation}
\end{lemma}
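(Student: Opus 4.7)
The plan is to prove the bound element by element by splitting $v_h$ into its $H^m$ projection and the orthogonal complement, then applying the already-established norm equivalence from Lemma~\ref{lemma:SKequivVK}.

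First, I would observe that for any $v_h \in V_h + \mathbb{P}_k(\mathcal{T}_h)$ and any $K \in \mathcal T_h$, the restriction $v_h|_K$ lies in $V_k(K)$, since $\mathbb{P}_k(K) \subseteq V_k(K)$. Consequently, using that $\Pi^K$ is a projection onto $\mathbb P_k(K)$, the function $v_h|_K - \Pi^K v_h \in V_k(K) \cap \ker(\Pi^K)$, so it belongs to precisely the subspace where the norm equivalence~\eqref{eq:SKequiv} applies.

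Next, I would use the triangle inequality on each element to write
\begin{equation*}
|v_h|_{m,K}^2 \leq 2|\Pi^K v_h|_{m,K}^2 + 2|v_h - \Pi^K v_h|_{m,K}^2.
\end{equation*}
The first term on the right-hand side is exactly $2\|\nabla^m\Pi^K v_h\|_{0,K}^2$, which is directly present in $a_{h,K}(v_h, v_h)$. For the second term, I would apply Lemma~\ref{lemma:SKequivVK} to the admissible function $v_h|_K - \Pi^K v_h$, obtaining
\begin{equation*}
|v_h - \Pi^K v_h|_{m,K}^2 \lesssim S_K(v_h - \Pi^K v_h,\, v_h - \Pi^K v_h).
\end{equation*}
Summing both contributions over all $K \in \mathcal T_h$ yields $|v_h|_{m,h}^2 \lesssim a_h(v_h, v_h)$.

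There is no real obstacle here; the work has already been done in establishing Lemma~\ref{lemma:SKequivVK}. The only subtle point worth emphasizing in the write-up is checking the admissibility of $v_h|_K - \Pi^K v_h$ for the norm equivalence, which relies on $\mathbb{P}_k(K) \subseteq V_k(K)$ so that the enlarged space $V_h + \mathbb{P}_k(\mathcal{T}_h)$ still produces elementwise restrictions in $V_k(K)$. Once this is noted, the estimate is a clean two-line argument from the triangle inequality and Lemma~\ref{lemma:SKequivVK}.
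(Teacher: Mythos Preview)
Your proposal is correct and follows essentially the same route as the paper. The only difference is that the paper exploits the $H^m$-orthogonality of $\Pi^K$ (from \eqref{eq:H2projlocal1}) to obtain the exact Pythagorean identity $|v_h|_{m,K}^2 = |\Pi^K v_h|_{m,K}^2 + |v_h-\Pi^K v_h|_{m,K}^2$ in place of your triangle inequality with the factor~$2$, after which it applies the norm equivalence \eqref{eq:SKequiv} just as you do.
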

\begin{proof}
Since $\Pi^K$ is the $H^m$-orthogonal projection,
\[
|v_h|_{m, K}^2 = \left|\Pi^K(v_h|_K)\right|_{m, K}^2 + \left|v_h-\Pi^K(v_h|_K)\right|_{m, K}^2.
\]
Applying~\eqref{eq:SKequiv}, we have
\begin{align}
|v_h|_{m, K}^2 &\lesssim  \left|\Pi^K(v_h|_K)\right|_{m, K}^2 + S_K(v_h-\Pi^K(v_h|_K), v_h-\Pi^K(v_h|_K))\notag\\
&=a_{h,K}(v_h, v_h), \label{eq:20181012-3}
\end{align}
which implies~\eqref{eq:ahcoercivity}.
\end{proof}

Therefore the nonconforming virtual element method~\eqref{polyharmonicNoncfmVEM} is uniquely solvable by the Lax-Milgram lemma.

\subsection{Weak continuity}
Based on Lemma~\ref{lem:bimgunisolvence}, for any $F\in\mathcal F_h^{1}$, $v_h\in V_h$, we have the weak continuity
\begin{equation}\label{eq:weakcontinuitygradient}
(\llbracket\nabla_h^sv_h\rrbracket, \tau)_F =0 \quad\forall~\tau\in \mathbb P_{k-(2m-1-s)}(F; \mathbb T_{n}(s))
\end{equation}
for $s=0,1,\cdots, m-1$,
and
\begin{equation}\label{eq:weakcontinuitygradient2}
Q_0^e(\llbracket\nabla_h^{s}v_h\rrbracket|_F) =0 \quad\forall~e\in\mathcal F^{m-s-1}(F)
\end{equation}
for $s=m-n,\cdots, m-1$,
where $\nabla_h$ is the elementwise gradient with respect to the partition $\mathcal T_h$.


Recall the following error estimates of the $L^2$ projection and the Bramble-Hilbert Lemma (cf.~\cite[Lemma~4.3.8]{BrennerScott2008}).
\begin{lemma}
Let $\ell\geq 0$. For each $F\in\mathcal F_h^{r}$ with $r=0,1,\cdots, n-1$, and $e\in\mathcal{F}^1(F)$, we have for any $v\in H^{\ell+1}(F)$ that 
\begin{align}
\label{eq:PKerror}
\|v-Q_{\ell}^Fv\|_{0,F}&\lesssim h_F^{\ell+1}|v|_{\ell+1, F},\\
\label{eq:PFerror}
\|v-Q_{\ell}^ev\|_{0,e}&\lesssim h_F^{\ell+1/2}|v|_{\ell+1, F}.
\end{align}
For each $K\in\mathcal T_h\cup \mathcal T_h^*$, there exists a linear operator $T_{\ell}^K: L^1(K)\to \mathbb P_{\ell}(K)$ such that for any $v\in H^{\ell+1}(K)$
\begin{equation}\label{eq:Bramble-Hilbert}
\|v-T_{\ell}^Kv\|_{j,K}\lesssim h_K^{\ell+1-j}|v|_{\ell+1, K}\quad\textrm{ for  }\; 0\leq j\leq \ell+1.
\end{equation}
\end{lemma}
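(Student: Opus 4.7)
The plan is to establish the Bramble--Hilbert estimate \eqref{eq:Bramble-Hilbert} first, since the two $L^2$-projection estimates then follow immediately by combining it with minimality of the $L^2$ projection and, for \eqref{eq:PFerror}, the trace inequality \eqref{L2trace}. The whole argument is classical (see \cite{BrennerScott2008}); the only point requiring attention is that the star-shapedness hypothesis needed for the construction is exactly supplied by assumption (A1).

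For \eqref{eq:Bramble-Hilbert}, I would construct $T_\ell^K v$ as the averaged Taylor polynomial of degree $\ell$ associated with a ball $B\subset K$ with respect to which $K$ is star-shaped and with $\mathrm{diam}(B)\eqsim h_K$. The existence of such a ball is built into (A1) for $K\in\mathcal T_h$ and into shape-regularity for simplices $K\in\mathcal T_h^*$. Concretely, let $\varphi\in C_0^{\infty}(B)$ be a standard bump with $\int_B\varphi = 1$ and set
\[
T_\ell^K v(\boldsymbol x) := \int_B \varphi(\boldsymbol y)\,\big(\mathcal T_{\boldsymbol y}^{\ell} v\big)(\boldsymbol x)\,\dd\boldsymbol y,
\]
where $\mathcal T_{\boldsymbol y}^{\ell}v$ is the Taylor polynomial of $v$ of order $\ell$ centred at $\boldsymbol y$. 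Standard computation (Brenner--Scott, Proposition 4.1.17 and Lemma 4.3.8) shows that $T_\ell^K:L^1(K)\to\mathbb P_\ell(K)$ is a bounded linear operator reproducing $\mathbb P_\ell(K)$, and the bound \eqref{eq:Bramble-Hilbert} follows with the constant depending only on the chunkiness parameter $h_K/\mathrm{diam}(B)$, which is uniformly bounded under (A1).

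For \eqref{eq:PKerror}, I would apply the previous construction on $F$ itself (admissible because (A1) guarantees $F$ is star-shaped with bounded chunkiness parameter when $1\leq r\leq n-1$, and because $F=K$ handles the case $r=0$). Using the $L^2(F)$-optimality of $Q_\ell^F$,
\[
\|v-Q_\ell^F v\|_{0,F} \leq \|v-T_\ell^F v\|_{0,F} \lesssim h_F^{\ell+1}\,|v|_{\ell+1,F}.
\]
For \eqref{eq:PFerror}, note that $T_\ell^F v\big|_{e}\in\mathbb P_\ell(e)$, so by $L^2(e)$-optimality of $Q_\ell^e$,
\[
\|v-Q_\ell^e v\|_{0,e}\leq \|v-T_\ell^F v\|_{0,e}.
\]
The trace inequality \eqref{L2trace} applied on $F$, together with $\mathrm{diam}(e)\eqsim h_F$ and \eqref{eq:Bramble-Hilbert} on $F$ with $j=0,1$, gives
\[
\|v-T_\ell^F v\|_{0,e}^2 \lesssim h_F^{-1}\|v-T_\ell^F v\|_{0,F}^2 + h_F\,|v-T_\ell^F v|_{1,F}^2 \lesssim h_F^{2\ell+1}\,|v|_{\ell+1,F}^2,
\]
which is \eqref{eq:PFerror}.

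The only subtle point is confirming that the averaged-Taylor construction applies uniformly in $h$ on every face of every co-dimension; this is precisely what (A1) buys us, so no genuine difficulty arises beyond bookkeeping. Everything else is direct quotation of the Bramble--Hilbert machinery and of the trace inequality already recorded in \eqref{L2trace}.
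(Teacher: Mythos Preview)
Your proposal is correct and essentially coincides with what the paper does: the paper gives no proof of this lemma at all, merely citing \cite[Lemma~4.3.8]{BrennerScott2008} and stating the estimates as recalled facts. Your write-up is an accurate expansion of that citation---averaged Taylor polynomial on a star-shaped domain for \eqref{eq:Bramble-Hilbert}, $L^2$-minimality for \eqref{eq:PKerror}, and $L^2$-minimality plus the trace inequality \eqref{L2trace} for \eqref{eq:PFerror}---so there is nothing to add.
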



Define $T_h: L^2(\Omega)\to \mathbb P_k(\mathcal T_h)$ as
\[
(T_hv)|_K:= T_k^{K}(v|_K)\quad\forall~K\in\mathcal T_h.
\]

\begin{lemma}\label{lem:20181013-1}
Given $F\in\mathcal F_h^{1}$ and nonnegative integer $s<m-n$,  it holds for any $v_h\in V_h$
\begin{equation}\label{eq:20190701-3}
\sum_{j=0}^{n-2}\sum_{e_j\in\mathcal F^j(F)}h_F^{j/2}\big\|\llbracket\nabla_h^{s}v_h\rrbracket\big\|_{0,e_{j}}\lesssim
\sum_{j=0}^{n-2}\sum_{e_j\in\mathcal F^j(F)}h_F^{j/2+m-n-s}\big\|\llbracket\nabla_h^{m-n}v_h\rrbracket\big\|_{0,e_{j}}.
\end{equation}
\end{lemma}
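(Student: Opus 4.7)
The plan is to combine two observations. First, the degrees of freedom \eqref{Hmdof3} force $\nabla^j v_h$ to be single-valued at every vertex $\delta\in\mathcal F^n_h$ for $j=0,1,\ldots,m-n$, so that $\llbracket\nabla_h^{s}v_h\rrbracket(\delta)=0$ at each vertex of $F$ whenever $s\le m-n$. Second, since the tangential gradient on a face commutes with restriction from either side,
\begin{equation*}
\nabla_{e_j}\llbracket\nabla_h^s v_h\rrbracket = \llbracket\nabla_{e_j}\nabla_h^s v_h\rrbracket,
\end{equation*}
and $|\nabla_{e_j}\nabla_h^s v_h|\le|\nabla_h^{s+1}v_h|$ pointwise. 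Together these let me trade one derivative for one power of $h_F$.

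The technical engine will be the standard Poincar\'e--Friedrichs inequality on a star-shaped domain $D$,
\begin{equation*}
\|w\|_{0,D}^{2}\lesssim h_D^{2}\|\nabla w\|_{0,D}^{2}+h_D\|w\|_{0,\partial D}^{2},
\end{equation*}
which is valid by (A1) on every $e_j\in\mathcal F^j(F)$ of positive dimension, i.e., for $0\le j\le n-2$. Applied to $w:=\llbracket\nabla_h^s v_h\rrbracket$ and combined with the tangential bound above, multiplying by $h_F^{j/2}$ produces the single-face estimate
\begin{equation*}
h_F^{j/2}\|\llbracket\nabla_h^s v_h\rrbracket\|_{0,e_j} \lesssim h_F^{j/2+1}\|\llbracket\nabla_h^{s+1}v_h\rrbracket\|_{0,e_j} + \sum_{e_{j+1}\subset\partial e_j}h_F^{(j+1)/2}\|\llbracket\nabla_h^{s}v_h\rrbracket\|_{0,e_{j+1}}.
\end{equation*}

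I would then iterate this estimate in $j$ from $j=0$ upwards to $j=n-1$, summing over $e_j\in\mathcal F^j(F)$. The chain terminates at $j=n-1$ because $e_{n-1}$ is a vertex and $\|\llbracket\nabla_h^{s}v_h\rrbracket\|_{0,e_{n-1}}=|\llbracket\nabla_h^{s}v_h\rrbracket(\delta)|=0$ by the first observation, and the accumulated combinatorial factors stay bounded since (A1)--(A2) imply that each $e_{j+1}$ is a subface of only $O(1)$ parents $e_j$. The outcome is the one-step reduction
\begin{equation*}
\sum_{j=0}^{n-2}\sum_{e_j\in\mathcal F^j(F)}h_F^{j/2}\|\llbracket\nabla_h^{s}v_h\rrbracket\|_{0,e_j}
\lesssim \sum_{j=0}^{n-2}\sum_{e_j\in\mathcal F^j(F)}h_F^{j/2+1}\|\llbracket\nabla_h^{s+1}v_h\rrbracket\|_{0,e_j},
\end{equation*}
valid whenever $s+1\le m-n$. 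Applying this reduction $m-n-s$ times with $s$ replaced by $s,s+1,\ldots,m-n-1$ in succession yields the desired weight $h_F^{m-n-s}$ and the jump of $\nabla_h^{m-n}v_h$ on the right.

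The main obstacle is bookkeeping rather than analysis: the nested induction along the flag $e_0\supset e_1\supset\cdots\supset e_{n-1}$ requires uniformly bounded multiplicities and a Poincar\'e--Friedrichs constant that does not degenerate on anisotropic admissible $e_j$. Both are furnished by (A1)--(A2) through the uniformly bounded chunkiness parameter and the quasi-uniform virtual triangulation, so no new tools beyond those already invoked in the paper are required.
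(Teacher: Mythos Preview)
Your argument is correct and arrives at the same one-step reduction
\[
\sum_{j=0}^{n-2}\sum_{e_j\in\mathcal F^j(F)}h_F^{j/2}\big\|\llbracket\nabla_h^{s}v_h\rrbracket\big\|_{0,e_j}
\lesssim \sum_{j=0}^{n-2}\sum_{e_j\in\mathcal F^j(F)}h_F^{j/2+1}\big\|\llbracket\nabla_h^{s+1}v_h\rrbracket\big\|_{0,e_j},
\]
followed by iteration in $s$, just as the paper does. The route to that reduction, however, is genuinely different. The paper works \emph{upward} in the face flag: it applies the trace inequality \eqref{L2trace} to $\llbracket\nabla_h^s v_h\rrbracket-Q_0^F(\llbracket\nabla_h^s v_h\rrbracket)$ to pass from $e_j$ to its parent $e_{j-1}$, telescopes to $e_0=F$, invokes the Poincar\'e inequality \eqref{eq:PKerror} on $F$, and then separately descends to a vertex $\delta$ (using $\llbracket\nabla_h^s v_h\rrbracket(\delta)=0$) to eliminate the constant $Q_0^F(\llbracket\nabla_h^s v_h\rrbracket)$. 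You instead work \emph{downward}: a Poincar\'e--Friedrichs inequality on each $e_j$ pushes the estimate to $\partial e_j$, and the recursion terminates cleanly at the vertices because the jump vanishes there. Your approach is more direct and avoids the detour through $Q_0^F$; the paper's approach has the advantage that it relies only on the trace inequality \eqref{L2trace} already stated and used throughout, whereas your Poincar\'e--Friedrichs bound $\|w\|_{0,D}^2\lesssim h_D^2\|\nabla w\|_{0,D}^2+h_D\|w\|_{0,\partial D}^2$ is an additional (though standard and easily derived) tool. One minor remark: your one-step reduction actually holds for all $s\le m-n$ (the vertex vanishing needs only $s\le m-n$), not just $s+1\le m-n$; this does not affect the iteration since you only use it for $s\le m-n-1$.
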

\begin{proof}
For $j=1, \cdots, n-2$, applying the trace inequality \eqref{L2trace}, it follows
\begin{align*}
&\sum_{e_j\in\mathcal F^j(F)}h_F^{j/2}\big\|\llbracket\nabla_h^{s}v_h\rrbracket-Q_0^{F}(\llbracket\nabla_h^{s}v_h\rrbracket)\big\|_{0,e_{j}} \\
\lesssim & \sum_{e_{j-1}\in\mathcal F^{j-1}(F)}h_F^{(j-1)/2}\big\|\llbracket\nabla_h^{s}v_h\rrbracket-Q_0^{F}(\llbracket\nabla_h^{s}v_h\rrbracket)\big\|_{0,e_{j-1}} \\
&+\sum_{e_{j-1}\in\mathcal F^{j-1}(F)}h_F^{(j+1)/2}\big\|\llbracket\nabla_h^{s+1}v_h\rrbracket\big\|_{0,e_{j-1}}.
\end{align*}
Then employing this inequality recursively, we obtain
\begin{align*}
&\sum_{e_j\in\mathcal F^j(F)}h_F^{j/2}\big\|\llbracket\nabla_h^{s}v_h\rrbracket-Q_0^{F}(\llbracket\nabla_h^{s}v_h\rrbracket)\big\|_{0,e_{j}} \\
\lesssim & \big\|\llbracket\nabla_h^{s}v_h\rrbracket-Q_0^{F}(\llbracket\nabla_h^{s}v_h\rrbracket)\big\|_{0,F} +
\sum_{i=0}^{j-1}\sum_{e_i\in\mathcal F^i(F)}h_F^{(i+2)/2}\big\|\llbracket\nabla_h^{s+1}v_h\rrbracket\big\|_{0,e_{i}}.
\end{align*}
Hence we get from \eqref{eq:PKerror}
\begin{align}
&\sum_{j=0}^{n-2}\sum_{e_j\in\mathcal F^j(F)}h_F^{j/2}\big\|\llbracket\nabla_h^{s}v_h\rrbracket-Q_0^{F}(\llbracket\nabla_h^{s}v_h\rrbracket)\big\|_{0,e_{j}} \notag\\
\lesssim & \big\|\llbracket\nabla_h^{s}v_h\rrbracket-Q_0^{F}(\llbracket\nabla_h^{s}v_h\rrbracket)\big\|_{0,F} +
\sum_{i=0}^{n-3}\sum_{e_i\in\mathcal F^i(F)}h_F^{(i+2)/2}\big\|\llbracket\nabla_h^{s+1}v_h\rrbracket\big\|_{0,e_{i}} \notag\\
\lesssim &
\sum_{j=0}^{n-3}\sum_{e_j\in\mathcal F^j(F)}h_F^{(j+2)/2}\big\|\llbracket\nabla_h^{s+1}v_h\rrbracket\big\|_{0,e_{j}}.\label{eq:20190701-2}
\end{align}
Adopting the trace inequality \eqref{L2trace}, it follows from \eqref{eq:20190701-2}
\begin{align*}
&\sum_{\delta\in\mathcal F^{n-1}(F)}h_F^{(n-1)/2}\big|\llbracket\nabla_h^sv_h\rrbracket-Q_0^F(\llbracket\nabla_h^sv_h\rrbracket)\big|(\delta)\\
 \lesssim & \sum_{e_{n-2}\in\mathcal F^{n-2}(F)}h_F^{(n-2)/2}\big\|\llbracket\nabla_h^sv_h\rrbracket-Q_0^F(\llbracket\nabla_h^sv_h\rrbracket)\big\|_{0, e_{n-2}} + h_F^{n/2}\big\|\llbracket\nabla_h^{s+1}v_h\rrbracket\big\|_{0, e_{n-2}}\\
\lesssim &
\sum_{j=0}^{n-2}\sum_{e_j\in\mathcal F^j(F)}h_F^{(j+2)/2}\big\|\llbracket\nabla_h^{s+1}v_h\rrbracket\big\|_{0,e_{j}}.
\end{align*}
Take some $\delta\in\mathcal F^{n-1}(F)$. Due to the degrees of freedom \eqref{Hmdof3}, we have $\llbracket\nabla_h^sv_h\rrbracket(\delta)=0$.
Then for $j=0,1,\cdots, n-2$ and any $e_j\in\mathcal F^j(F)$, it follows from \eqref{eq:PKerror}
\begin{align*}
h_F^{j/2}\big\|\llbracket\nabla_h^sv_h\rrbracket\big\|_{0,e_j}&=h_F^{j/2}\big\|\llbracket\nabla_h^sv_h\rrbracket - \llbracket\nabla_h^sv_h\rrbracket(\delta)\big\|_{0,e_j} \\
&=h_F^{j/2}\big\|\llbracket\nabla_h^sv_h\rrbracket - Q_0^F(\llbracket\nabla_h^sv_h\rrbracket) - (\llbracket\nabla_h^sv_h\rrbracket-Q_0^F(\llbracket\nabla_h^sv_h\rrbracket))(\delta)\big\|_{0,e_j} \\
&\leq h_F^{j/2}\big\|\llbracket\nabla_h^sv_h\rrbracket - Q_0^F(\llbracket\nabla_h^sv_h\rrbracket)\big\|_{0,e_j} \\
&\quad\;+ h_F^{(n-1)/2}\big|\llbracket\nabla_h^sv_h\rrbracket-Q_0^F(\llbracket\nabla_h^sv_h\rrbracket)\big|(\delta).
\end{align*}
Combining the last two inequalities and \eqref{eq:20190701-2} yields
\[
\sum_{j=0}^{n-2}\sum_{e_j\in\mathcal F^j(F)}h_F^{j/2}\big\|\llbracket\nabla_h^{s}v_h\rrbracket\big\|_{0,e_{j}}\lesssim
\sum_{j=0}^{n-2}\sum_{e_j\in\mathcal F^j(F)}h_F^{(j+2)/2}\big\|\llbracket\nabla_h^{s+1}v_h\rrbracket\big\|_{0,e_{j}},
\]
which indicates \eqref{eq:20190701-3}.
\end{proof}

\begin{lemma}
For each $F\in\mathcal F_h^{1}$ and nonnegative integer $s<m$, it holds %
\begin{equation}\label{eq:weakcontinuityestimate}
\big\|\llbracket\nabla_h^sv_h\rrbracket\big\|_{0,F}
\lesssim \sum_{K\in\partial^{-1}F}h_{K}^{m-s-1/2}|v_h|_{m,K}\quad\forall~v_h\in V_h.
\end{equation}
\end{lemma}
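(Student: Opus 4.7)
The plan is to split $v_h$ on each element $K\in\partial^{-1}F$ into a Bramble--Hilbert polynomial approximation plus a remainder, control the remainder's jump by the trace inequality, and control the polynomial part of the jump by invoking the weak-continuity relations built into $V_h$. First I would set $p_K := T_{m-1}^K v_h\in\mathbb{P}_{m-1}(K)$ for each $K\in\partial^{-1}F$ and define $p$ piecewise as $p_{K^\pm}$ on the two sides of $F$. By \eqref{eq:Bramble-Hilbert}, $|v_h - p_K|_{j,K}\lesssim h_K^{m-j}|v_h|_{m,K}$ for $0\leq j\leq m$, and applying the trace inequality \eqref{L2trace} componentwise to $\nabla^s(v_h - p_K)$ yields
$$\|\nabla^s(v_h - p_K)\|_{0,F}^2 \lesssim h_K^{-1}\|\nabla^s(v_h-p_K)\|_{0,K}^2 + h_K|v_h - p_K|_{s+1,K}^2 \lesssim h_K^{2(m-s)-1}|v_h|_{m,K}^2.$$
Summing over $K\in\partial^{-1}F$ shows the jump of the remainder already obeys the target estimate
$$\|\llbracket\nabla_h^s(v_h - p)\rrbracket\|_{0,F} \lesssim \sum_{K\in\partial^{-1}F} h_K^{m-s-1/2}|v_h|_{m,K}.$$

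It then remains to bound $\Phi := \llbracket\nabla_h^s p\rrbracket$, a polynomial of degree at most $m-1-s$ on $F$ with values in $\mathbb{T}_n(s)$. From \eqref{eq:weakcontinuitygradient} one has $(\Phi, q)_F = -(\llbracket\nabla_h^s(v_h-p)\rrbracket, q)_F$ for every $q\in\mathbb{P}_{k-(2m-1-s)}(F;\mathbb{T}_n(s))$. When $k\geq 3m-2$ this test space already contains $\mathbb{P}_{m-1-s}(F;\mathbb{T}_n(s))$, so the choice $q=\Phi$ together with Cauchy--Schwarz delivers $\|\Phi\|_{0,F}\lesssim \|\llbracket\nabla_h^s(v_h-p)\rrbracket\|_{0,F}$ and the proof is complete. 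For general $k\geq m$ and $s\in\{m-n,\dots,m-1\}$, I would supplement these moment identities with the nodal/averaged identities $Q_0^e(\Phi)=-Q_0^e(\llbracket\nabla_h^s(v_h-p)\rrbracket|_F)$ for $e\in\mathcal{F}^{m-s-1}(F)$ coming from \eqref{eq:weakcontinuitygradient2}. These two families of conditions form a unisolvent set on $\mathbb{P}_{m-1-s}(F;\mathbb{T}_n(s))$, so a scaling argument in the spirit of Lemma~\ref{lem:polynomialequiv} and \eqref{eq:polynorm} yields a norm equivalence bounding $\|\Phi\|_{0,F}$ by an appropriately weighted sum of those quantities; each such quantity is in turn controlled by $\|\llbracket\nabla_h^s(v_h-p)\rrbracket\|_{0,F}$ and by $\|\llbracket\nabla_h^s(v_h-p)\rrbracket\|_{0,e}$, the latter being absorbed into the former by iterating \eqref{L2trace}. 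In the remaining regime $s<m-n$, where \eqref{eq:weakcontinuitygradient2} is not directly available, I would first invoke Lemma~\ref{lem:20181013-1} to estimate $\|\llbracket\nabla_h^s v_h\rrbracket\|_{0,F}$ (which appears on its left-hand side for $j=0$) in terms of $\|\llbracket\nabla_h^{m-n}v_h\rrbracket\|_{0,e_j}$ on subfaces $e_j\subset F$, and then bound those contributions via the trace inequality together with the already-treated case $s=m-n$.

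The hardest part of this plan is justifying the unisolvence of the combined moment/averaged degrees of freedom on $\mathbb{P}_{m-1-s}(F;\mathbb{T}_n(s))$---essentially the face analog of Lemma~\ref{lem:bimgunisolvence}---together with the correct $h_F$-scaling of the resulting norm equivalence, as well as carefully tracking the powers of $h_F$ that arise when chaining Lemma~\ref{lem:20181013-1} with repeated trace inequalities in the regime $s<m-n$.
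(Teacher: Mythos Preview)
Your decomposition into a Bramble--Hilbert polynomial $p$ plus a remainder, with the remainder handled by the trace inequality, is sound and is the standard opening for such jump estimates. Your treatment of the regime $s<m-n$ via Lemma~\ref{lem:20181013-1} also matches the paper exactly.

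The gap is in the unisolvence claim for $s\in\{m-n,\dots,m-1\}$. You assert that the face moments from \eqref{eq:weakcontinuitygradient} together with the subface averages from \eqref{eq:weakcontinuitygradient2} form a unisolvent set on $\mathbb P_{m-1-s}(F;\mathbb T_n(s))$. This holds for $n=2$ (where $F$ is one-dimensional and the two endpoint values fix a linear function), but fails for $n\geq 3$. Take $n=3$, $m=4$, $k=4$, $s=1$: then \eqref{eq:weakcontinuitygradient} is vacuous (the test degree is $k-(2m-1-s)=-2$) and \eqref{eq:weakcontinuitygradient2} supplies only the vertex values on the two-dimensional face $F$; if $F$ is a triangle these are three scalar conditions on the six-dimensional space $\mathbb P_2(F)$, so a quadratic is not determined. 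The missing information lives in weak-continuity relations on \emph{intermediate} subfaces (edges of $F$ in this example), which do follow from the degrees of freedom \eqref{Hmdof2} via Lemma~\ref{lem:bimgunisolvence} but are not captured by \eqref{eq:weakcontinuitygradient}--\eqref{eq:weakcontinuitygradient2} alone.

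The paper's route, citing Lemmas~4.5--4.6 of \cite{ChenHuang2019}, sidesteps any unisolvence claim by a recursive trace argument in the spirit of Lemma~\ref{lem:20181013-1}: one establishes the sharper bound \eqref{eq:20190701-4} on \emph{all} subfaces $e\in\mathcal F^j(F)$, $0\leq j\leq m-1-s$, by descending induction on $s$ from $m-1$ to $m-n$. At each step, condition \eqref{eq:weakcontinuitygradient2} kills the mean on the lowest-dimensional subfaces, and the remaining oscillation is controlled by the already-established bound on $\llbracket\nabla_h^{s+1}v_h\rrbracket$ via Poincar\'e and trace inequalities --- no global polynomial determination on $F$ is needed. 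If you wish to rescue your approach, you must bring in the full family of intermediate-subface moment identities and verify unisolvence (with uniform scaling) on general polytopal faces, at which point the bookkeeping becomes at least as involved as the recursion.
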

\begin{proof}
According to \eqref{eq:weakcontinuitygradient2} and the proof of Lemmas~4.5-4.6 in \cite{ChenHuang2019}, we get for $s=m-1,m-2,\cdots,m-n$ and any $e\in \mathcal F^j(F)$ with $j=0,1,\cdots,m-1-s$
\begin{equation}\label{eq:20190701-4}
\big\|\llbracket\nabla_h^sv_h\rrbracket\big\|_{0,e}
\lesssim \sum_{K\in\partial^{-1}F}h_{K}^{m-s-(j+1)/2}|v_h|_{m,K}.
\end{equation}
For $s<m-n$, it follows from \eqref{eq:20190701-3} and \eqref{eq:20190701-4} that
\begin{align*}
\big\|\llbracket\nabla_h^sv_h\rrbracket\big\|_{0,F}&\lesssim
\sum_{j=0}^{n-2}\sum_{e_j\in\mathcal F^j(F)}h_F^{j/2+m-n-s}\big\|\llbracket\nabla_h^{m-n}v_h\rrbracket\big\|_{0,e_{j}} \\
&\lesssim \sum_{K\in\partial^{-1}F}h_{K}^{m-s-1/2}|v_h|_{m,K},
\end{align*}
with together with \eqref{eq:20190701-4} with $j=0$ again implies \eqref{eq:weakcontinuityestimate}.
\end{proof}

Given the virtual triangulation $\mathcal T_h^{\ast}$, for each nonnegative integer $r<m$, define the tensorial $(m-r)$-th order Lagrange element space associated with  $\mathcal T_h^{\ast}$
\[
S_h^r:=\{\tau_h\in H_0^1(\Omega; \mathbb T_{n}(r)): \tau_h|_K\in\mathbb P_{m-r}(G; \mathbb T_{n}(r))\quad\forall~K\in\mathcal T_h^{\ast}\}.
\]

According to Lemma~4.7 in \cite{ChenHuang2019}, \eqref{eq:Bramble-Hilbert} and \eqref{eq:weakcontinuityestimate},
for $r=0,1,\cdots, m-1$ and any $v_h\in V_h$, there exists $\tau_r= \tau_r (v_h)\in S_h^r$ such that
\begin{equation}\label{eq:connectionerror}
|\nabla_h^rv_h-\tau_r|_{j,h}\lesssim h^{m-r-j}|v_h|_{m,h}
\quad \textrm{ for }\; j=0, 1, \cdots, m-r.
\end{equation}
By \eqref{eq:connectionerror},
we have the discrete Poincar\'e inequality (cf. Lemma~4.8 in \cite{ChenHuang2019})
\begin{equation}\label{eq:poincareinequality}
\|v_h\|_{m,h}\lesssim |v_h|_{m,h}\quad \forall~v_h\in V_h,
\end{equation}
and thus
\[
\|v_h\|_{m,h}\eqsim |v_h|_{m,h}\quad \forall~v_h\in V_h.
\]

\section{Error Analysis}\label{sec:erroranalysis}
In this section, we will analyze the nonconforming virtual element method~\eqref{polyharmonicNoncfmVEM}.
Denote by $I_h: H_0^m(\Omega)\to V_h$ the standard canonical interpolation operator based on the degrees of freedom in~\eqref{Hmdof3}-\eqref{Hmdof1}.
Adopting the same argument as in \cite{ChenHuang2019}, we have the following error estimate for the interpolation operator $I_h$
\begin{equation}\label{eq:Iherror}
|v-I_hv|_{m,K}\lesssim h_K^{k+1-m}|v|_{k+1, K}\quad\forall~v\in H^{k+1}(\Omega), K\in\mathcal T_h.
\end{equation}
Due to~\eqref{eq:projectPk} and~\eqref{eq:H2projlocal1}, we have the following $k$-consistency
\begin{equation}\label{eq:kconsistency}
a_{h,K}(p, v)=(\nabla^mp, \nabla^mv)_K\quad\forall~p\in \mathbb P_k(K), v\in V_k^K.
\end{equation}

\subsection{Consistency error estimate}


\begin{lemma}\label{lem:20190725-1}
Let $u\in H_0^m(\Omega)\cap H^{k+1}(\Omega)$ be the solution of the polyharmonic equation~\eqref{eq:polyharmonic}. For $i=0,1,\cdots, \min\{m-1, k-m\}$, it holds for any $v_h\in V_h$ that
\begin{equation}\label{eq:20190725-1}
\Big |(\div^i\nabla^mu, \nabla_h^{m-i}v_h)+(\div^{i+1}\nabla^mu, \nabla_h^{m-(i+1)}v_h)\Big |\lesssim h^{k+1-m}|u|_{k+1}|v_h|_{m,h}.
\end{equation}
\end{lemma}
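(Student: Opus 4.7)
The plan is to reduce the given expression to a sum of face integrals via elementwise integration by parts, recognise these as jumps of lower-order derivatives of $v_h$, and then exploit the weak continuity of $V_h$ together with polynomial approximation.

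First I would integrate by parts on each $K\in\mathcal T_h$. Viewing $\div^i\nabla^mu$ as an $(m-i)$-tensor (well-defined because $u\in H^{k+1}$ and $i\le k-m$ give $\div^i\nabla^mu\in H^{k+1-m-i}(\Omega)\subseteq H^1$), one obtains
\[
(\div^i\nabla^mu,\nabla^{m-i}v_h)_K + (\div^{i+1}\nabla^mu,\nabla^{m-i-1}v_h)_K = (\div^i\nabla^mu\cdot\nu_{K,F},\nabla^{m-i-1}v_h)_{\partial K}.
\]
Summing over $K\in\mathcal T_h$ and using that $\div^i\nabla^mu$ has a single-valued normal trace on every interior $(n-1)$-face, the element boundary integrals assemble into
\[
\sum_{F\in\mathcal F_h^1}(\div^i\nabla^mu\cdot\nu_{F,1},\llbracket\nabla_h^{m-i-1}v_h\rrbracket)_F,
\]
where boundary faces $F\in\mathcal F_h^{1,\partial}$ contribute through one-sided traces of $\nabla_h^{m-i-1}v_h$ (still captured by the paper's $\llbracket\cdot\rrbracket$ notation).

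Next I would invoke the weak continuity \eqref{eq:weakcontinuitygradient} with $s=m-i-1$, which applies because $0\le s<m$. It asserts that $\llbracket\nabla_h^{m-i-1}v_h\rrbracket$ is $L^2$-orthogonal on $F$ to tensor polynomials of degree $k-m-i$, so one may subtract any such polynomial from $\div^i\nabla^mu\cdot\nu_{F,1}$ without changing the face integral. A natural choice is the trace on $F$ of $T_{k-m-i}^K(\div^i\nabla^mu)\cdot\nu_{F,1}$ for some $K\in\partial^{-1}F$. Combining the Bramble--Hilbert estimate \eqref{eq:Bramble-Hilbert} with the trace inequality \eqref{L2trace} yields
\[
\|\div^i\nabla^mu\cdot\nu_{F,1}-T_{k-m-i}^K(\div^i\nabla^mu)\cdot\nu_{F,1}\|_{0,F}\lesssim h_K^{k-m-i+1/2}|u|_{k+1,K}.
\]

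Pairing this with the jump estimate \eqref{eq:weakcontinuityestimate} (applied with $s=m-i-1$), which gives $\|\llbracket\nabla_h^{m-i-1}v_h\rrbracket\|_{0,F}\lesssim\sum_{K\in\partial^{-1}F}h_K^{i+1/2}|v_h|_{m,K}$, and using Cauchy--Schwarz on each face, the exponents of $h_K$ combine to $k-m+1$, and a final discrete Cauchy--Schwarz over faces produces the claimed bound. The principal difficulty is the tensor bookkeeping: one must check that the element-wise boundary terms truly assemble into the paper's scalar jump $\llbracket\cdot\rrbracket$ on both interior and boundary faces (including the correct contraction with $\nu_{F,1}$ and the outer normals), and that the subtracted polynomial has matching tensor valence so that \eqref{eq:weakcontinuitygradient} can be invoked. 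The regularity assumption $i\le k-m$ is precisely tight, since it is what makes $\div^i\nabla^mu\cdot\nu$ a bona fide $L^2$ function on each face.
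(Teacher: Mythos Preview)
Your proposal is correct and follows essentially the same approach as the paper: integrate by parts elementwise, assemble into face jumps, exploit the weak continuity \eqref{eq:weakcontinuitygradient} to subtract a polynomial of degree $k-m-i$, and pair the resulting approximation error with the jump bound \eqref{eq:weakcontinuityestimate}. The only cosmetic difference is that the paper subtracts the face $L^2$ projection $Q_{k-(m+i)}^F((\div^i\nabla^mu)\nu_{F,1})$ and invokes the combined estimate \eqref{eq:PFerror}, whereas you subtract the trace of the averaged Taylor polynomial $T_{k-m-i}^K(\div^i\nabla^mu)\cdot\nu_{F,1}$ and split the estimate into Bramble--Hilbert \eqref{eq:Bramble-Hilbert} plus the trace inequality \eqref{L2trace}; both routes yield the same $h_K^{k-m-i+1/2}$ factor.
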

\begin{proof}
It follows from the weak continuity \eqref{eq:weakcontinuitygradient} with $s=m-(i+1)$ that the projection $Q_{k-(m+i)}^F(\llbracket\nabla_h^{m-(i+1)}v_h\rrbracket)=0$ for each $F\in\mathcal{F}_h^1$ and  $i=0,\cdots,m-1$.
Applying integration by parts, we get 
\begin{align*}
&(\div^i\nabla^mu, \nabla_h^{m-i}v_h)+(\div^{i+1}\nabla^mu, \nabla_h^{m-(i+1)}v_h) \\
=&\sum_{K\in\mathcal T_h}((\div^i\nabla^mu)\nu, \nabla_h^{m-(i+1)}v_h)_{\partial K} \\
=&\sum_{F\in\mathcal{F}_h^1}((\div^i\nabla^mu)\nu_{F,1}, \llbracket\nabla_h^{m-(i+1)}v_h\rrbracket)_{F} \\
=&\sum_{F\in\mathcal{F}_h^1}((\div^i\nabla^mu)\nu_{F,1}, \llbracket\nabla_h^{m-(i+1)}v_h\rrbracket-Q_{k-(m+i)}^F(\llbracket\nabla_h^{m-(i+1)}v_h\rrbracket))_{F} \\
=&\sum_{F\in\mathcal{F}_h^1}((\div^i\nabla^mu)\nu_{F,1}-Q_{k-(m+i)}^F((\div^i\nabla^mu)\nu_{F,1}), \llbracket\nabla_h^{m-(i+1)}v_h\rrbracket)_{F},
\end{align*}
with together with \eqref{eq:PFerror} and~\eqref{eq:weakcontinuityestimate} gives \eqref{eq:20190725-1}.
\end{proof}

\begin{lemma}
Let $u\in H_0^m(\Omega)\cap H^{2m-1}(\Omega)$ be the solution of the polyharmonic equation~\eqref{eq:polyharmonic}. Assume $m\leq k< 2m-1$. For $i=k-m+1,k-m+2,\cdots, m-2$, it holds for any $v_h\in V_h$ that
\begin{align}
&\Big |(\div^i\nabla^mu, \nabla_h^{m-i}v_h)+(\div^{i+1}\nabla^mu, \nabla_h^{m-(i+1)}v_h)\Big|  \notag\\
\lesssim &\Big (h^i|u|_{m+i} + h^{i+1}|u|_{m+i+1}\Big )|v_h|_{m,h}, \label{eq:20190725-2}
\end{align}
\begin{equation}\label{eq:20190725-3}
((-\div)^{m-1}\nabla^mu, \nabla_hv_h)-(f, v_h)\lesssim (h^{m-1}|u|_{2m-1} + h^{m}\|f\|_{0})|v_h|_{m,h}.
\end{equation}
\end{lemma}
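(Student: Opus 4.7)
The plan is to start both estimates from the common tensor integration by parts identity on each $K\in\mathcal T_h$,
\[
(\div^i\nabla^m u,\nabla^{m-i}v_h)_K+(\div^{i+1}\nabla^m u,\nabla^{m-(i+1)}v_h)_K=((\div^i\nabla^m u)\nu,\nabla^{m-(i+1)}v_h)_{\partial K},
\]
which after summing over $K$ collapses the pair of volume integrals into a single jump sum $\sum_{F\in\mathcal F_h^1}((\div^i\nabla^m u)\nu_{F,1},\llbracket\nabla_h^{m-(i+1)}v_h\rrbracket)_F$. This is the standard consistency-error form for nonconforming methods.

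For \eqref{eq:20190725-2} I would apply Cauchy--Schwarz on this jump sum and close with two standard ingredients. The trace inequality \eqref{L2trace} gives $\|(\div^i\nabla^m u)\nu_{F,1}\|_{0,F}\lesssim h_K^{-1/2}|u|_{m+i,K}+h_K^{1/2}|u|_{m+i+1,K}$, which is well defined since $m+i+1\le 2m-1$ in the stated range of $i$; the weak continuity estimate \eqref{eq:weakcontinuityestimate} with $s=m-(i+1)$ yields $\|\llbracket\nabla_h^{m-(i+1)}v_h\rrbracket\|_{0,F}\lesssim h_K^{i+1/2}|v_h|_{m,K}$. Multiplication of these two bounds and a discrete Cauchy--Schwarz over $F$ produce exactly $(h^i|u|_{m+i}+h^{i+1}|u|_{m+i+1})|v_h|_{m,h}$.

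For \eqref{eq:20190725-3} the same direct strategy would require $|u|_{2m}$, which is not available. The detour is to use the smooth approximant $\tau_0=\tau_0(v_h)\in S_h^0\subset H^1_0(\Omega)$ from \eqref{eq:connectionerror}. Writing $\sigma:=(-\div)^{m-1}\nabla^m u$ so that $\div\sigma=-f$, the jump sum $\sum_F(\sigma\cdot\nu_{F,1},\llbracket v_h\rrbracket)_F$ is unchanged after replacing $v_h$ by $v_h-\tau_0$, because $\tau_0$ is globally $H^1$ with vanishing trace on $\partial\Omega$. Integrating by parts back element by element then recasts this jump sum as $(\sigma,\nabla_h v_h-\nabla\tau_0)-(f,v_h-\tau_0)$. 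Cauchy--Schwarz together with $\|\sigma\|_0\lesssim|u|_{2m-1}$ and the approximation estimates $\|\nabla_h v_h-\nabla\tau_0\|_0\lesssim h^{m-1}|v_h|_{m,h}$ and $\|v_h-\tau_0\|_0\lesssim h^m|v_h|_{m,h}$ from \eqref{eq:connectionerror} give the desired $(h^{m-1}|u|_{2m-1}+h^m\|f\|_0)|v_h|_{m,h}$.

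The main obstacle is the second estimate: for $k<2m-1$ there is no useful weak continuity of $\llbracket v_h\rrbracket$ itself to exploit, and a face-polynomial subtraction that would make the trace inequality work does not help either, since a single tangential differentiation of $\sigma\cdot\nu_{F,1}$ still costs one more derivative of $u$ than is assumed. The correct device is the $H^1_0$ connection $\tau_0$, which allows a second integration by parts that transfers a derivative back onto $\sigma$, so that the missing regularity is absorbed by $\div\sigma=-f\in L^2(\Omega)$.
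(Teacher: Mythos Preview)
Your argument is correct. For \eqref{eq:20190725-3} you and the paper do the same thing: subtract the identity $(\sigma,\nabla\tau_0)=(f,\tau_0)$ (valid since $\sigma\in H(\div;\Omega)$ and $\tau_0\in H_0^1(\Omega)$) and apply Cauchy--Schwarz with \eqref{eq:connectionerror}. Your intermediate passage through the jump sum $\sum_F(\sigma\cdot\nu_{F,1},\llbracket v_h\rrbracket)_F$ is only heuristic, since writing that sum already needs $u\in H^{2m}$ elementwise; but your endpoint identity $(\sigma,\nabla_h v_h-\nabla\tau_0)-(f,v_h-\tau_0)$ follows directly from the subtraction without ever forming the jump sum, so no harm is done.

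For \eqref{eq:20190725-2} your route genuinely differs from the paper's. The paper treats both estimates uniformly: for every $i=k-m+1,\ldots,m-1$ it invokes the conforming connection $\tau_{m-(i+1)}\in S_h^{m-(i+1)}$ from \eqref{eq:connectionerror}, subtracts the exact identity $(\div^i\nabla^m u,\nabla\tau_{m-(i+1)})+(\div^{i+1}\nabla^m u,\tau_{m-(i+1)})=0$, and bounds the two volume remainders by Cauchy--Schwarz. You instead stay on the faces, pairing the trace bound $\|(\div^i\nabla^m u)\nu_{F,1}\|_{0,F}\lesssim h_K^{-1/2}|u|_{m+i,K}+h_K^{1/2}|u|_{m+i+1,K}$ (legitimate because $m+i+1\le 2m-1$) with the jump estimate \eqref{eq:weakcontinuityestimate}. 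Your approach is more elementary in that it avoids the Lagrange-space connection for these intermediate $i$; the paper's approach has the virtue of a single mechanism covering all $i$ in the range, including the endpoint $i=m-1$ where your direct method necessarily breaks down.
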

\begin{proof}
Thanks to \eqref{eq:connectionerror}, for $i=k-m+1, k-m+2, \cdots, m-1$, there exists $\tau_{m-(i+1)}\in W_h^{m-(i+1)}$ such that
\begin{equation}\label{eq:temp20180809-1}
|\nabla_h^{m-(i+1)}v_h-\tau_{m-(i+1)}|_{j,h}\lesssim h^{i+1-j}|v_h|_{m,h}
\quad \textrm{ for }\; j=0, 1.
\end{equation}
Since $\tau_{m-(i+1)}\in H_0^1(\Omega; \mathbb T_{n}(m-(i+1)))$, we get for $i=k-m+1,\cdots, m-2$
\begin{equation*}
(\div^i\nabla^mu, \nabla\tau_{m-(i+1)})+(\div^{i+1}\nabla^mu, \tau_{m-(i+1)})=0,
\end{equation*}
\begin{equation*}
((-\div)^{m-1}\nabla^mu, \nabla\tau_{0})-(f, \tau_{0})=0.
\end{equation*}
Then we have
for $i=k-m+1,\cdots, m-2$
\begin{align*}
&\,(\div^i\nabla^mu, \nabla_h^{m-i}v_h)+(\div^{i+1}\nabla^mu, \nabla_h^{m-(i+1)}v_h) \\
=&\, (\div^i\nabla^mu, \nabla_h(\nabla_h^{m-(i+1)}v_h-\tau_{m-(i+1)})) \\
&\,+(\div^{i+1}\nabla^mu, \nabla_h^{m-(i+1)}v_h-\tau_{m-(i+1)}),
\end{align*}
\[
((-\div)^{m-1}\nabla^mu, \nabla_hv_h)-(f, v_h)=((-\div)^{m-1}\nabla^mu, \nabla_h(v_h-\tau_{0}))-(f, v_h-\tau_{0}).
\]
Hence we conclude \eqref{eq:20190725-2}-\eqref{eq:20190725-3} from \eqref{eq:temp20180809-1}.
\end{proof}

Next consider the perturbation of the right hand side.
\begin{lemma}
Assume $f\in H^{\ell}(\mathcal T_h)$ with $\ell=\max\{0,k+1-2m\}$, then it holds for any $v_h\in V_h$ that
\begin{equation}\label{eq:20190725-4}
(f, v_h)-\langle f, v_h \rangle\lesssim h^{k+1-m+\max\{0,2m-k-1\}}|f|_{\ell,h}|v_h|_{m,h}.
\end{equation}
\end{lemma}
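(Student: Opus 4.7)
The plan is to split the argument according to the three cases in the definition of $\langle f,v_h\rangle$, reducing each case to an $L^2$-approximation estimate combined with the Poincar\'e inequality \eqref{eq:poincare} on $\ker(\Pi^K)$ and the Bramble--Hilbert bound \eqref{eq:Bramble-Hilbert}. In every case the goal is to extract a factor $h_K^{k+1-m+\max\{0,2m-k-1\}}$ from a product of two approximation errors (one for $f$, one for $v_h$).

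For the first regime $m\leq k\leq 2m-1$, we have $\ell=0$ and $\langle f,v_h\rangle=(f,\Pi_h v_h)$, so
\[
(f,v_h)-\langle f,v_h\rangle=\sum_{K\in\mathcal T_h}(f,v_h-\Pi^Kv_h)_K.
\]
Since $v_h-\Pi^Kv_h\in\ker(\Pi^K)$, the Poincar\'e inequality \eqref{eq:poincare} with $j=0,s=0$ together with the $H^m$-stability \eqref{eq:Hmprojbound} yields $\|v_h-\Pi^Kv_h\|_{0,K}\lesssim h_K^m|v_h|_{m,K}$. The Cauchy--Schwarz inequality then delivers the claimed bound $h^m\|f\|_0|v_h|_{m,h}$, matching $h^{k+1-m+(2m-k-1)}$.

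For the intermediate regime $2m\leq k\leq 3m-2$, we exploit the key identity \eqref{eq:20190805-1}: for every $q\in\mathbb P_{k-2m}(\mathcal T_h)$,
\[
(f,v_h)-\langle f,v_h\rangle=(f-q,\,v_h-Q_h^{m-1}\Pi_h v_h-Q_h^{k-2m}(v_h-\Pi_h v_h)).
\]
Choosing $q=Q_h^{k-2m}f$ gives $\|f-q\|_{0,K}\lesssim h_K^{k+1-2m}|f|_{\ell,K}$. I would then decompose the second factor on each $K$ as $(I-Q_h^{m-1})\Pi^Kv_h+(I-Q_h^{k-2m})(v_h-\Pi^Kv_h)$. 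The first piece is estimated by Bramble--Hilbert together with \eqref{eq:Hmprojbound}: $\|(I-Q_h^{m-1})\Pi^Kv_h\|_{0,K}\lesssim h_K^m|\Pi^Kv_h|_{m,K}\lesssim h_K^m|v_h|_{m,K}$. The second piece is the slightly delicate step: applying Bramble--Hilbert with polynomial degree $k-2m$ (which is $\leq m-2<m$ here) gives $\lesssim h_K^{k-2m+1}|v_h-\Pi^Kv_h|_{k-2m+1,K}$, and then the Poincar\'e inequality \eqref{eq:poincare} applied in the interior of $K$ at level $s=k-2m+1$ (valid since $k-2m+1\leq m$) together with \eqref{eq:Hmprojbound} contributes another factor $h_K^{3m-k-1}$, combining to $h_K^m|v_h|_{m,K}$. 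Summing over $K$ gives exactly $h^{k+1-m}|f|_{\ell,h}|v_h|_{m,h}$.

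For the last regime $3m-1\leq k$, we have $\langle f,v_h\rangle=(f,Q_h^{k-2m}v_h)$, so using the $L^2$-orthogonality of $Q_h^{k-2m}$,
\[
(f,v_h)-\langle f,v_h\rangle=(f-Q_h^{k-2m}f,\,v_h-Q_h^{k-2m}v_h).
\]
Because $k-2m+1\geq m$, the Bramble--Hilbert estimate for $Q_h^{k-2m}$ applied at level $m$ gives $\|v_h-Q_h^{k-2m}v_h\|_{0,K}\lesssim h_K^m|v_h|_{m,K}$, while the $f$-factor contributes $h_K^{k+1-2m}|f|_{\ell,K}$, once more producing $h^{k+1-m}|f|_{\ell,h}|v_h|_{m,h}$. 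The main obstacle I foresee is the second case, specifically the careful book-keeping needed to balance the Bramble--Hilbert degree $k-2m$ against the strongest seminorm controlled by the $\ker(\Pi^K)$ Poincar\'e inequality; the other two cases are essentially bookkeeping around $L^2$ projection errors.
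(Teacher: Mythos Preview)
Your proposal is correct and follows the paper's three-case structure with the same tools (identity \eqref{eq:20190805-1}, the $L^2$ projection error \eqref{eq:PKerror}, and the Poincar\'e inequality \eqref{eq:poincare}). The only notable difference is in the middle case $2m\leq k\leq 3m-2$: the paper uses the alternative algebraic splitting
\[
v_h-Q_h^{m-1}\Pi_hv_h-Q_h^{k-2m}(v_h-\Pi_hv_h)=\big(v_h-Q_h^{m-1}v_h\big)+\big(Q_h^{m-1}-Q_h^{k-2m}\big)(v_h-\Pi_hv_h),
\]
and then bounds the second summand simply by $\|v_h-\Pi_hv_h\|_0$ via contractivity of the $L^2$ projections, followed by \eqref{eq:poincare} at level $s=0$. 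This bypasses the step you flag as delicate: there is no need to invoke Bramble--Hilbert at degree $k-2m$ and then the Poincar\'e inequality at the intermediate level $s=k-2m+1$; the $L^2$ Poincar\'e bound on $\ker(\Pi^K)$ alone suffices. Your route still works because \eqref{eq:poincare} with $j=0$ covers all $0\leq s\leq m$, but the paper's splitting is shorter and avoids the bookkeeping you were concerned about.
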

\begin{proof}
For $m\leq k\leq 2m-1$, we get from the local Poincar\'e inequality \eqref{eq:poincare}
\[
(f, v_h)-\langle f, v_h \rangle=(f, v_h-\Pi_hv_h)\lesssim h^{m}\|f\|_{0}|v_h|_{m,h}.
\]
For $2m \leq k \leq 3m - 2$, it follows from \eqref{eq:20190805-1}, \eqref{eq:PKerror} and \eqref{eq:poincare}
\begin{align*}
(f, v_h)-\langle f, v_h \rangle&=\big(f, v_h-Q_h^{m-1}\Pi_h v_h-Q_h^{k-2m}(v_h-\Pi_h v_h)\big)\\
&=\big(f-Q_h^{k-2m}f, v_h-Q_h^{m-1}v_h +(Q_h^{m-1}-Q_h^{k-2m})(v_h-\Pi_h v_h)\big)\\
&\leq\|f-Q_h^{k-2m}f\|_0(\|v_h-Q_h^{m-1}v_h\|_0+\|v_h-\Pi_h v_h\|_0)\\
&\lesssim h^{k+1-m}|f|_{k+1-2m,h}|v_h|_{m,h}.
\end{align*}
For $k\geq 3m-1$, it holds from \eqref{eq:PKerror}
\begin{align*}
(f, v_h)-\langle f, v_h \rangle&=\big(f, v_h-Q_h^{k-2m}v_h\big)=\big(f-Q_h^{k-2m}f, v_h-Q_h^{k-2m}v_h\big)\\
&\leq\|f-Q_h^{k-2m}f\|_0\|v_h-Q_h^{m-1}v_h\|_0\\
&\lesssim h^{k+1-m}|f|_{k+1-2m,h}|v_h|_{m,h}.
\end{align*}
Combining the last three inequalities indicates \eqref{eq:20190725-4}.
\end{proof}

\begin{lemma}
Let $u\in H_0^m(\Omega)\cap H^{r}(\Omega)$ with $r=\max\{k+1, 2m-1\}$ be the solution of the polyharmonic equation~\eqref{eq:polyharmonic}.
Assume $f\in H^{\ell}(\mathcal T_h)$ with $\ell=\max\{0,k+1-2m\}$. It holds for any $v_h\in V_h$ that
\begin{align}
&(\nabla^mu, \nabla_h^mv_h)-\langle f, v_h \rangle\notag\\
\lesssim& h^{k+1-m}(\|u\|_{r}+h\|f\|_0+h^{\max\{0,2m-k-1\}}|f|_{\ell,h})|v_h|_{m,h}.\label{eq:consistencyerror}
\end{align}
\end{lemma}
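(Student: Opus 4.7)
The plan is to decompose the consistency error as
\[
(\nabla^mu, \nabla_h^mv_h)-\langle f, v_h \rangle = \big[(\nabla^mu, \nabla_h^mv_h)-(f, v_h)\big] + \big[(f, v_h)-\langle f, v_h \rangle\big],
\]
and treat the two brackets separately. The second bracket is bounded directly by the right-hand side perturbation estimate \eqref{eq:20190725-4}, which already produces the $h^{k+1-m+\max\{0,2m-k-1\}}|f|_{\ell,h}|v_h|_{m,h}$ contribution in the target inequality.

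For the first bracket I would perform elementwise integration by parts and use $(-\Delta)^mu = f$ together with $(-\Delta)^m = (-1)^m\div^m\nabla^m$ to derive the telescoping identity
\begin{align*}
(\nabla^mu, \nabla_h^mv_h)-(f, v_h) = \sum_{i=0}^{m-1}(-1)^i\Big[(\div^i\nabla^mu, \nabla_h^{m-i}v_h) + (\div^{i+1}\nabla^mu, \nabla_h^{m-(i+1)}v_h)\Big].
\end{align*}
The interior cross-terms cancel pairwise, so that only $(\nabla^mu,\nabla_h^mv_h)$ at $i=0$ and $(-1)^{m-1}(\div^m\nabla^mu, v_h) = -(f,v_h)$ at $i=m-1$ survive, yielding the identity.

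The next step is to estimate each summand according to the range of $i$ and the size of $k$. When $k \geq 2m-1$, \eqref{eq:20190725-1} applies for every $i = 0,\ldots,m-1$ and bounds each term by $h^{k+1-m}|u|_{k+1}|v_h|_{m,h}$. When $m \leq k < 2m-1$, I split the sum into three parts: \eqref{eq:20190725-1} handles $i = 0,\ldots, k-m$; \eqref{eq:20190725-2} handles $i = k-m+1,\ldots,m-2$ and produces $(h^i|u|_{m+i}+h^{i+1}|u|_{m+i+1})|v_h|_{m,h}$, which since $i\geq k-m+1$ absorbs into $h^{k+1-m}\|u\|_{2m-1}|v_h|_{m,h}$; and the last term $i=m-1$, after combining $(-1)^{m-1}(\div^m\nabla^mu, v_h)$ with $-(f,v_h)$, equals $((-\div)^{m-1}\nabla^mu, \nabla_hv_h)-(f, v_h)$, which \eqref{eq:20190725-3} bounds by $(h^{m-1}|u|_{2m-1}+h^m\|f\|_0)|v_h|_{m,h}$. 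Using $m-1 \geq k+1-m$ whenever $k\leq 2m-2$, all these contributions collapse into $h^{k+1-m}(\|u\|_r + h\|f\|_0)|v_h|_{m,h}$ with $r = \max\{k+1, 2m-1\}$.

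The main difficulty is not conceptual but a matter of careful bookkeeping across the threshold $k = 2m-1$: on the high-regularity side $k \geq 2m-1$ the optimal estimate \eqref{eq:20190725-1} suffices for every index, while on the low-regularity side one must verify that the higher-order boundary contributions from \eqref{eq:20190725-2}--\eqref{eq:20190725-3} carry enough positive powers of $h$ to telescope down to $h^{k+1-m}$, which is exactly why the regularity exponent $r$ must be taken as $\max\{k+1, 2m-1\}$ rather than just $k+1$.
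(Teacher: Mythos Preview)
Your proposal is correct and matches the paper's proof essentially line for line: the same decomposition into $[(\nabla^mu,\nabla_h^mv_h)-(f,v_h)]+[(f,v_h)-\langle f,v_h\rangle]$, the same telescoping identity (the paper writes the sum to $i=m-2$ and separates the last term explicitly, which is your $i=m-1$ summand rewritten via $(-1)^{m-1}\div^m\nabla^m u=-f$), and the same application of \eqref{eq:20190725-1}--\eqref{eq:20190725-3} according to the threshold $k\gtrless 2m-1$, followed by \eqref{eq:20190725-4}. Your bookkeeping on the powers of $h$ is accurate.
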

\begin{proof}
Notice that
\begin{align*}
&(\nabla^mu, \nabla_h^mv_h)-(f, v_h)\notag\\
=&\sum_{i=0}^{m-2}(-1)^i\Big ((\div^i\nabla^mu, \nabla_h^{m-i}v_h)+(\div^{i+1}\nabla^mu, \nabla_h^{m-(i+1)}v_h)\Big )\\
&+((-\div)^{m-1}\nabla^mu, \nabla_hv_h)-(f, v_h).
\end{align*}
Then it follows from~\eqref{eq:20190725-1}-\eqref{eq:20190725-3}
\[
(\nabla^mu, \nabla_h^mv_h)-(f, v_h)\lesssim h^{k+1-m}(\|u\|_{r}+h\|f\|_0)|v_h|_{m,h},
\]
which together with \eqref{eq:20190725-4} yields \eqref{eq:consistencyerror}.
\end{proof}

\subsection{Error estimate}
With previous preparation, we can show the optimal order convergence of the nonconforming virtual element method~\eqref{polyharmonicNoncfmVEM}.
\begin{theorem}
Let $u\in H_0^m(\Omega)\cap H^{r}(\Omega)$ with $r=\max\{k+1, 2m-1\}$ be the solution of the polyharmonic equation~\eqref{eq:polyharmonic}, and $u_h\in V_h$ be the nonconforming virtual element method~\eqref{polyharmonicNoncfmVEM}. Assume the mesh $\mathcal T_h$ satisfies conditions (A1) and (A2). Assume $f\in H^{\ell}(\mathcal T_h)$ with $\ell=\max\{0,k+1-2m\}$.
Then it holds
\begin{equation}\label{eq:energyerror}
|u-u_h|_{m,h}\lesssim h^{k+1-m}(\|u\|_{r}+h\|f\|_0+h^{\max\{0,2m-k-1\}}|f|_{\ell,h}).
\end{equation}
\end{theorem}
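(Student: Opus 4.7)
The plan is to follow the standard Strang-type argument for nonconforming virtual element methods, gluing together the coercivity \eqref{eq:ahcoercivity}, the $k$-consistency \eqref{eq:kconsistency}, the consistency error estimate \eqref{eq:consistencyerror}, the interpolation error \eqref{eq:Iherror} and the Bramble-Hilbert polynomial approximation \eqref{eq:Bramble-Hilbert}. Start with the triangle split $u-u_h=(u-I_hu)+(I_hu-u_h)$; the first piece is immediately controlled by \eqref{eq:Iherror}, and the task reduces to bounding $e_h:=u_h-I_hu\in V_h$.

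By the coercivity \eqref{eq:ahcoercivity},
\begin{equation*}
|e_h|_{m,h}^2\lesssim a_h(e_h,e_h)=\langle f,e_h\rangle - a_h(I_hu,e_h),
\end{equation*}
so the heart of the proof is a clean expansion of $a_h(I_hu,e_h)$ on each element. Using the identity \eqref{eq:20181012-1} (i.e.\ $\Pi^K I_h u=\Pi^K u$) together with the projection defining relation \eqref{eq:H2projlocal1}, one gets
\begin{equation*}
a_{h,K}(I_hu,e_h)=(\nabla^m u,\nabla^m\Pi^K e_h)_K + S_K(I_hu-\Pi^K u,\, e_h-\Pi^K e_h).
\end{equation*}
Adding and subtracting $(\nabla^m u,\nabla^m_h e_h)$ to move to a fully broken inner product,
\begin{equation*}
a_h(e_h,e_h)=\Bigl[\langle f,e_h\rangle-(\nabla^m u,\nabla_h^m e_h)\Bigr]+\sum_K(\nabla^m u,\nabla^m(e_h-\Pi^K e_h))_K-\sum_K S_K(I_hu-\Pi^K u,e_h-\Pi^K e_h).
\end{equation*}

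Next I would estimate the three terms on the right. The bracket is precisely the consistency quantity bounded by \eqref{eq:consistencyerror}. For the second sum, the key observation is that $e_h-\Pi^K e_h\in\ker(\Pi^K)\cap V_k(K)$, so by \eqref{eq:H2projlocal1} the polynomial part of $\nabla^m u$ drops out; replacing $u$ by $u-T_k^K u$ and invoking \eqref{eq:Bramble-Hilbert} together with the $H^m$-stability \eqref{eq:Hmprojbound} yields the bound $h^{k+1-m}|u|_{k+1}|e_h|_{m,h}$. For the stabilization sum, apply \eqref{eq:SKbound} and write $I_h u-\Pi^K u=(I_h u-u)+(u-\Pi^K u)$; the first piece is controlled by \eqref{eq:Iherror}, while the second is treated by inserting $T_k^K u$, using \eqref{eq:projectPk} and \eqref{eq:Hmprojbound}, and applying \eqref{eq:Bramble-Hilbert} once more, giving the same $h^{k+1-m}|u|_{k+1,K}$ order.

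Combining these three bounds and dividing by $|e_h|_{m,h}$ gives
\begin{equation*}
|e_h|_{m,h}\lesssim h^{k+1-m}\bigl(\|u\|_r+h\|f\|_0+h^{\max\{0,2m-k-1\}}|f|_{\ell,h}\bigr),
\end{equation*}
and adding $|u-I_hu|_{m,h}$ via the triangle inequality delivers \eqref{eq:energyerror}. The only delicate step is the second sum: one must recognize that the local $H^m$ projection annihilates polynomials against functions in $\ker(\Pi^K)$, so that the whole expression behaves like a polynomial approximation error rather than the full regularity of $u$; everything else is bookkeeping with already-established estimates.
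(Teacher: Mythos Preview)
Your argument is correct and follows the same Strang-type skeleton as the paper (coercivity, consistency estimate \eqref{eq:consistencyerror}, interpolation \eqref{eq:Iherror}, triangle inequality), but the handling of the term $a_h(I_hu,e_h)-(\nabla^m u,\nabla_h^m e_h)$ is organized differently. The paper does not expand $a_{h,K}$ via $\Pi^K I_hu=\Pi^K u$; instead it simply inserts the piecewise polynomial $T_hu$, uses the $k$-consistency \eqref{eq:kconsistency} to write $a_h(T_hu,e_h)=(\nabla_h^m T_hu,\nabla_h^m e_h)$, and bounds $a_h(I_hu-T_hu,e_h)$ in one shot by the continuity \eqref{eq:ahbound}. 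This is shorter and avoids looking inside the stabilization at all, at the cost of being less transparent about the role of the projection. Your route, by contrast, makes explicit why the stabilization term and the residual $(\nabla^m u,\nabla^m(e_h-\Pi^K e_h))_K$ each carry only polynomial-approximation error; the extra bookkeeping step is verifying $I_hu-\Pi^K u\in V_k(K)\cap\ker(\Pi^K)$ so that \eqref{eq:SKbound} applies, which you implicitly use and which indeed follows from \eqref{eq:20181012-1} and \eqref{eq:projectPk}. Either path yields \eqref{eq:energyerror} with the same constants.
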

\begin{proof}
Let $v_h=I_hu-u_h$. It follows from \eqref{eq:kconsistency}, ~\eqref{eq:ahbound},~\eqref{eq:Iherror} and~\eqref{eq:Bramble-Hilbert}
\begin{align}
a_h(I_hu, v_h)-(\nabla^mu, \nabla_h^mv_h)&=a_h(I_hu-T_hu, v_h)+a_h(T_hu, v_h)-(\nabla^mu, \nabla_h^mv_h) \notag\\
&=a_h(I_hu-T_hu, v_h)+(\nabla_h^m(T_hu-u), \nabla_h^mv_h) \notag\\
&\lesssim  |I_hu-T_hu|_{m,h}|v_h|_{m,h}+|u-T_hu|_{m,h}|v_h|_{m,h} \notag\\
&\lesssim  (|u-I_hu|_{m,h}+|u-T_hu|_{m,h})|v_h|_{m,h} \notag\\
&\lesssim  h^{k+1-m}|u|_{k+1}|v_h|_{m,h}. \label{eq:temp201806181}
\end{align}
Notice that
\[
a_h(I_hu, v_h)- \langle f, v_h \rangle=a_h(I_hu, v_h)-(\nabla^mu, \nabla_h^mv_h)+(\nabla^mu, \nabla_h^mv_h)- \langle f, v_h \rangle.
\]
We get from~\eqref{eq:ahcoercivity},~\eqref{polyharmonicNoncfmVEM}, \eqref{eq:temp201806181} and~\eqref{eq:consistencyerror}
\begin{align*}
|I_hu-u_h|_{m,h}^2&\lesssim a_h(I_hu-u_h, v_h)=a_h(I_hu, v_h)- \langle f, v_h \rangle \\
&\lesssim h^{k+1-m}(\|u\|_{r}+h\|f\|_0+h^{\max\{0,2m-k-1\}}|f|_{\ell,h}),
\end{align*}
which together with \eqref{eq:Iherror} implies \eqref{eq:energyerror}.
\end{proof}

\section{Implementation of the virtual element method}

\XH{In this section, we will discuss the implementation of the nonconforming virtual element method~\eqref{polyharmonicNoncfmVEM}.
The implementation of the virtual element method of second order problems can be found in \cite{BeiraodaVeigaBrezziMariniRusso2014,BeiraodaVeigaBrezziMariniRusso2016b}.}

\XH{Take any $K\in\mathcal T_h$. Let $n_k:=\dim\mathbb P_{k}(K)$, and denote all the functions in $\mathbb M_{k}(K)$ by $\textsf{m}_1, \textsf{m}_2,\cdots, \textsf{m}_{n_k}$.
Let the bases of $V_k(K)$ be $\phi_1, \phi_2, \cdots, \phi_{N_K}$, which are dual to $\chi_1, \chi_2, \cdots, \chi_{N_K}$, i.e.
\[
\chi_i(\phi_j)=\delta_{ij}\quad i,j=1,2,\cdots, N_K.
\]
Here $\delta_{ij}$ is Kronecker delta.}

\subsection{Local $H^m$ projection}

\XH{Since $\Pi^K\phi_j\in \mathbb P_{k}(K)$ for $j=1,2,\cdots, N_K$, we can write
\begin{equation}\label{eq:PiV2Pcomponent}
\Pi^K\phi_j=\sum_{i=1}^{n_k}\pi_{ij}\textsf{m}_i.
\end{equation}
Denote the matrix representation $(\pi_{ij})_{n_k\times N_K}$ of $\Pi^K$ by $\mathbf\Pi^K$, then
\begin{equation}\label{eq:PiV2P}
(\Pi^K\phi_1, \Pi^K\phi_2, \cdots, \Pi^K\phi_{N_K})=(\textsf{m}_1, \textsf{m}_2, \cdots, \textsf{m}_{n_k})\mathbf\Pi^K.
\end{equation}
Let
$
\mathbf G:=\begin{pmatrix}
\mathbf G_{11} & \mathbf G_{12} \\
\bs O & \mathbf G_{22}
\end{pmatrix}
$, where $\bs O\in\mathbb R^{(n_{k}-n_{m-1})\times n_{m-1}}$ is the zero matrix, and matrices $\mathbf G_{11}\in\mathbb R^{n_{m-1}\times n_{m-1}}$, $\mathbf G_{12}\in\mathbb R^{n_{m-1}\times (n_{k}-n_{m-1})}$ and $\mathbf G_{22}\in\mathbb R^{(n_{k}-n_{m-1})\times(n_{k}-n_{m-1})}$ are given by
\[
\mathbf G_{11}:=\begin{pmatrix}
\sum\limits_{\delta\in\mathcal F^{n}(K)}\textsf{m}_1(\delta) & \cdots & \sum\limits_{\delta\in\mathcal F^{n}(K)}\textsf{m}_{n_{m-1}}(\delta)
\\
\sum\limits_{\delta\in\mathcal F^{n}(K)}(\nabla\textsf{m}_1)(\delta) & \cdots & \sum\limits_{\delta\in\mathcal F^{n}(K)}(\nabla\textsf{m}_{n_{m-1}})(\delta)
\\
\vdots & & \vdots\\
\sum\limits_{\delta\in\mathcal F^{n}(K)}(\nabla^{m-n}\textsf{m}_1)(\delta) & \cdots & \sum\limits_{\delta\in\mathcal F^{n}(K)}(\nabla^{m-n}\textsf{m}_{n_{m-1}})(\delta)\\
\sum\limits_{F\in\mathcal F^{n-1}(K)}Q_0^{F}(\nabla^{m-n+1}\textsf{m}_1) & & \sum\limits_{F\in\mathcal F^{n-1}(K)}Q_0^{F}(\nabla^{m-n+1}\textsf{m}_{n_{m-1}}) \\
\vdots & & \vdots\\
\sum\limits_{F\in\mathcal F^{1}(K)}Q_0^{F}(\nabla^{m-1}\textsf{m}_1) & \cdots & \sum\limits_{F\in\mathcal F^{1}(K)}Q_0^{F}(\nabla^{m-1}\textsf{m}_{n_{m-1}})
\end{pmatrix},
\]
\[
\mathbf G_{12}:=\begin{pmatrix}
\sum\limits_{\delta\in\mathcal F^{n}(K)}\textsf{m}_{n_{m-1}+1}(\delta) & \cdots & \sum\limits_{\delta\in\mathcal F^{n}(K)}\textsf{m}_{n_{k}}(\delta)
\\
\sum\limits_{\delta\in\mathcal F^{n}(K)}(\nabla\textsf{m}_{n_{m-1}+1})(\delta) & \cdots & \sum\limits_{\delta\in\mathcal F^{n}(K)}(\nabla\textsf{m}_{n_{k}})(\delta)
\\
\vdots & & \vdots\\
\sum\limits_{\delta\in\mathcal F^{n}(K)}(\nabla^{m-n}\textsf{m}_{n_{m-1}+1})(\delta) & \cdots & \sum\limits_{\delta\in\mathcal F^{n}(K)}(\nabla^{m-n}\textsf{m}_{n_{k}})(\delta)\\
\sum\limits_{F\in\mathcal F^{n-1}(K)}Q_0^{F}(\nabla^{m-n+1}\textsf{m}_{n_{m-1}+1}) & & \sum\limits_{F\in\mathcal F^{n-1}(K)}Q_0^{F}(\nabla^{m-n+1}\textsf{m}_{n_{k}}) \\
\vdots & & \vdots\\
\sum\limits_{F\in\mathcal F^{1}(K)}Q_0^{F}(\nabla^{m-1}\textsf{m}_{n_{m-1}+1}) & \cdots & \sum\limits_{F\in\mathcal F^{1}(K)}Q_0^{F}(\nabla^{m-1}\textsf{m}_{n_{k}})
\end{pmatrix},
\]
\[
\mathbf G_{22}:=\begin{pmatrix}
(\nabla^m\textsf{m}_{n_{m-1}+1}, \nabla^m\textsf{m}_{n_{m-1}+1})_K & \cdots &  (\nabla^m\textsf{m}_{n_{m-1}+1}, \nabla^m\textsf{m}_{n_{k}})_K \\
\vdots &  & \vdots \\
(\nabla^m\textsf{m}_{n_{k}}, \nabla^m\textsf{m}_{n_{m-1}+1})_K & \cdots &  (\nabla^m\textsf{m}_{n_{k}}, \nabla^m\textsf{m}_{n_{k}})_K
\end{pmatrix}.
\]
Noting that
\[
\nabla^j \textsf{m}_i= 0 \quad\textrm{for } i=1,2,\cdots, n_{j-1},\; j=1,2,\cdots, m-1,
\]
there are many zero entries in the submatrix $\mathbf G_{11}$.
Let
$
\mathbf B:=(\bs b_{1}, \bs b_{2}, \cdots, \bs b_{N_K})_{n_k\times N_K}
$ with
\[
\bs b_{j}:=\begin{pmatrix}
\sum\limits_{\delta\in\mathcal F^{n}(K)}\phi_j(\delta) \\
\sum\limits_{\delta\in\mathcal F^{n}(K)}(\nabla\phi_j)(\delta) \\
\vdots \\
\sum\limits_{\delta\in\mathcal F^{n}(K)}(\nabla^{m-n}\phi_j)(\delta) \\
\sum\limits_{F\in\mathcal F^{n-1}(K)}Q_0^{F}(\nabla^{m-n+1}\phi_j) \\
\vdots \\
\sum\limits_{F\in\mathcal F^{1}(K)}Q_0^{F}(\nabla^{m-1}\phi_j) \\
(\nabla^m\textsf{m}_{n_{m-1}+1}, \nabla^m\phi_j)_K \\
\vdots \\
(\nabla^m\textsf{m}_{n_{k}}, \nabla^m\phi_j)_K
\end{pmatrix}.
\]
Then the linear system of the problem \eqref{eq:H2projlocal1}-\eqref{eq:H2projlocal3} with $v=\phi_1, \phi_2, \cdots, \phi_{N_K}$ is
\[
\mathbf G\mathbf\Pi^K=\mathbf B.
\]
Hence we can compute $\mathbf\Pi^K$ as follows
\begin{equation}\label{eq:Pimatrixcomp}
\mathbf\Pi^K=\mathbf G^{-1}\mathbf B.
\end{equation}}

\XH{Define matrix
\[
\mathbf D:=(\mathbf D_{ij})_{N_K\times n_k}=\begin{pmatrix}
\chi_1(\textsf{m}_{1}) & \chi_1(\textsf{m}_{2}) & \cdots & \chi_1(\textsf{m}_{n_k}) \\
\chi_2(\textsf{m}_{1}) & \chi_2(\textsf{m}_{2}) & \cdots & \chi_2(\textsf{m}_{n_k}) \\
\vdots & \vdots & \cdots & \vdots \\
\chi_{N_K}(\textsf{m}_{1}) & \chi_{N_K}(\textsf{m}_{2}) & \cdots & \chi_{N_K}(\textsf{m}_{n_k})
\end{pmatrix}.
\]
Then
\begin{equation}\label{eq:PVbases}
(\textsf{m}_1, \textsf{m}_2, \cdots, \textsf{m}_{n_k})=(\phi_1, \phi_2, \cdots, \phi_{N_K})\mathbf D.
\end{equation}
It follows from \eqref{eq:PiV2P} and \eqref{eq:Pimatrixcomp} that
\begin{align*}
&(\Pi^K\phi_1, \Pi^K\phi_2, \cdots, \Pi^K\phi_{N_K})\\
=&(\phi_1, \phi_2, \cdots, \phi_{N_K})\mathbf D\mathbf\Pi^K=(\phi_1, \phi_2, \cdots, \phi_{N_K})\mathbf D\mathbf G^{-1}\mathbf B.
\end{align*}
\begin{lemma}
It holds
\[
\mathbf G=\mathbf B\mathbf D.
\]
This provides another way to compute $\mathbf G$.
\end{lemma}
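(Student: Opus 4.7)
The plan is to interpret every row of $\mathbf G$ and the corresponding row of $\mathbf B$ as the evaluation of a single linear functional on $H^m(K)$. For each $\ell=1,\dots,n_k$, let $L_\ell:H^m(K)\to\mathbb R$ denote the functional encoded by the $\ell$-th row of the block matrix in the definition of $\mathbf G$. Explicitly, $L_\ell$ is one of the boundary functionals $\sum_{\delta\in\mathcal F^{n}(K)}(\nabla^{j}\,\cdot\,)(\delta)$ for $0\le j\le m-n$ or $\sum_{F\in\mathcal F^{r}(K)}Q_0^F(\nabla^{m-r}\,\cdot\,)$ for $1\le r\le n-1$, or else one of the bulk functionals $(\nabla^m\textsf{m}_{n_{m-1}+i},\nabla^m\,\cdot\,)_K$. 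By inspection of the definitions,
\[
\mathbf G_{\ell,j}=L_\ell(\textsf{m}_j),\qquad \mathbf B_{\ell,j}=L_\ell(\phi_j),
\]
for all $\ell=1,\dots,n_k$ and $j=1,\dots,N_K$.

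Next I would invoke the identity \eqref{eq:PVbases}, i.e.\ $\textsf{m}_j=\sum_{i=1}^{N_K}\mathbf D_{ij}\phi_i=\sum_{i=1}^{N_K}\chi_i(\textsf{m}_j)\phi_i$. This uses the fact that $\mathbb P_k(K)\subseteq V_k(K)$ (so the monomials lie in the virtual element space) together with the duality $\chi_i(\phi_j)=\delta_{ij}$ of the chosen basis $\{\phi_j\}$. Applying the linear functional $L_\ell$ to both sides yields
\[
L_\ell(\textsf{m}_j)=\sum_{i=1}^{N_K}\chi_i(\textsf{m}_j)\,L_\ell(\phi_i),
\]
so
\[
\mathbf G_{\ell,j}=L_\ell(\textsf{m}_j)=\sum_{i=1}^{N_K}L_\ell(\phi_i)\,\chi_i(\textsf{m}_j)=\sum_{i=1}^{N_K}\mathbf B_{\ell,i}\mathbf D_{i,j}=(\mathbf B\mathbf D)_{\ell,j}.
\]
Since this holds for every $\ell,j$, we conclude $\mathbf G=\mathbf B\mathbf D$.

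No step is a genuine obstacle: the only thing to watch is the inclusion $\mathbb P_k(K)\subseteq V_k(K)$, which guarantees that the dual basis expansion \eqref{eq:PVbases} is applicable to the monomials $\textsf{m}_j$. Everything else is just linearity of $L_\ell$ together with a careful bookkeeping of which row of $\mathbf G$ or $\mathbf B$ corresponds to which functional. An equivalent, slightly slicker packaging would be to right-multiply \eqref{eq:PVbases} by any column of $\mathbf\Pi^K$ and compare with \eqref{eq:PiV2P}, but the functional-evaluation viewpoint above is the most transparent.
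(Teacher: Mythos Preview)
Your proof is correct and actually more direct than the paper's. You observe that the rows of $\mathbf G$ and $\mathbf B$ are the \emph{same} linear functionals $L_\ell$ evaluated on, respectively, the monomial basis $\{\textsf{m}_j\}$ and the dual basis $\{\phi_j\}$; the change-of-basis relation \eqref{eq:PVbases} then gives $\mathbf G=\mathbf B\mathbf D$ entrywise by pure linearity. The only point to be careful about is that the lower-left zero block of $\mathbf G$ is consistent with your functional interpretation, since $(\nabla^m\textsf{m}_{n_{m-1}+i},\nabla^m\textsf{m}_j)_K=0$ for $j\le n_{m-1}$; you might state this explicitly.

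The paper instead argues via the projection $\Pi^K$: it writes $(\textsf{m}_j)=(\Pi^K\textsf{m}_j)$, applies linearity of $\Pi^K$ together with \eqref{eq:PVbases} and \eqref{eq:PiV2P}, and finally substitutes $\mathbf\Pi^K=\mathbf G^{-1}\mathbf B$ to obtain $I=\mathbf G^{-1}\mathbf B\mathbf D$. This route is shorter to write but is less transparent and formally requires the invertibility of $\mathbf G$, which your argument avoids entirely. Your approach has the advantage of making clear \emph{why} the identity holds (both matrices record the same functionals in two bases), whereas the paper's argument ties the identity to the well-posedness of the local projection problem.
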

\begin{proof}
Applying \eqref{eq:PVbases}, \eqref{eq:PiV2P} and \eqref{eq:Pimatrixcomp}, we get
\begin{align*}
(\textsf{m}_1, \textsf{m}_2, \cdots, \textsf{m}_{n_k})&=(\Pi^K\textsf{m}_1, \Pi^K\textsf{m}_2, \cdots, \Pi^K\textsf{m}_{n_k}) \\
&=(\Pi^K\phi_1, \Pi^K\phi_2, \cdots, \Pi^K\phi_{N_K})\mathbf D \\
&=(\textsf{m}_1, \textsf{m}_2, \cdots, \textsf{m}_{n_k})\mathbf\Pi^K\mathbf D \\
&=(\textsf{m}_1, \textsf{m}_2, \cdots, \textsf{m}_{n_k})\mathbf\Pi^K\mathbf D \\
&=(\textsf{m}_1, \textsf{m}_2, \cdots, \textsf{m}_{n_k})\mathbf G^{-1}\mathbf B\mathbf D,
\end{align*}
as required.
\end{proof}}

\subsection{Local stiffness matrix}
\XH{
Denote the local stiffness matrix by $\mathbf A_K:=((\mathbf A_K)_{ij})\in \mathbb R^{N_K\times N_K}$, where
\[
(\mathbf A_K)_{ij}:=a_{h,K}(\phi_j, \phi_i)=(\nabla^m\Pi^{K}\phi_j, \nabla^m\Pi^{K}\phi_i)_K+S_{K}(\phi_j-\Pi^{K}\phi_j, \phi_i-\Pi^{K}\phi_i).
\]
Using \eqref{eq:PiV2Pcomponent}, the consistency term
\[
(\nabla^m\Pi^{K}\phi_j, \nabla^m\Pi^{K}\phi_i)_K=(\nabla^m\Pi^{K}\phi_j, \nabla^m\Pi^{K}\phi_i)_K=\sum_{s,r=1}^{N_K}\pi_{si}(\nabla^m\textsf{m}_r, \nabla^m\textsf{m}_s)_K\pi_{rj}.
\]
Hence the matrix representation of the consistency term is
\[
\mathbf A_K^c=(\mathbf\Pi^K)^{\intercal}\begin{pmatrix}
\bs O_{n_{m-1}\times n_{m-1}} & \bs O_{n_{m-1}\times (n_{k}-n_{m-1})} \\
\bs O_{(n_{k}-n_{m-1})\times n_{m-1}} & \mathbf G_{22}
\end{pmatrix}\mathbf\Pi^K.
\]}

\XH{Next consider the stability term. Let matrix 
\[
\mathbf S:=h_K^{n-2m}\begin{pmatrix}
\bs I_{(N_K-n_{k-2m})\times (N_K-n_{k-2m})} & \bs O_{(N_K-n_{k-2m})\times n_{k-2m}} \\
\bs O_{n_{k-2m}\times (N_K-n_{k-2m})} & \bs O_{n_{k-2m}\times n_{k-2m}}
\end{pmatrix}.
\]
The right-bottom zero submatrix in $\mathbf S$ reflects the fact that the stabilization $S_{K}(\cdot,\cdot)$ only involves the boundary degrees of freedom.
The stability term
\[
S_{K}(\phi_j-\Pi^{K}\phi_j, \phi_i-\Pi^{K}\phi_i)=\sum_{s,r=1}^{N_K}(I-\mathbf D\mathbf\Pi^K)_{si}S_{K}(\phi_r, \phi_s)(I-\mathbf D\mathbf\Pi^K)_{rj}.
\]
Thus the matrix representation of the stability term is
\[
\mathbf A_K^s=(I-\mathbf D\mathbf\Pi^K)^{\intercal}\mathbf S(I-\mathbf D\mathbf\Pi^K).
\]}

\XH{Therefore the local stiffness matrix
\[
\mathbf A_K=\mathbf A_K^c+\mathbf A_K^s = (\mathbf\Pi^K)^{\intercal}\begin{pmatrix}
\bs O & \bs O \\
\bs O & \mathbf G_{22}
\end{pmatrix}\mathbf\Pi^K + (I-\mathbf D\mathbf\Pi^K)^{\intercal}\mathbf S(I-\mathbf D\mathbf\Pi^K).
\]}

\subsection{Right hand side term}
\XH{Finally we discuss the implementation of the right hand side term.
The vector representation of the right hand side term restricted on $K$ is $\mathbf b:=(b_1, b_2, \cdots, b_{N_K})^{\intercal}$ with
\[
b_i: =
\begin{cases}
\, (f, \Pi^{K}\phi_i)_K, & m\leq k \leq 2m -1, \\
\, (f, Q_{m-1}^{K}\Pi^{K}\phi_i+Q_{k-2m}^{K}(\phi_i-\Pi^{K}\phi_i))_K, & 2m \leq k \leq 3m - 2,\\
\, (f, Q_{k-2m}^{K}\phi_i)_K, & 3m-1\leq k.
\end{cases}
\]
Set
\begin{align*}
\mathbf F&:=((f, \textsf{m}_1)_K, (f, \textsf{m}_2)_K, \cdots, (f, \textsf{m}_{n_k})_K)^{\intercal}, \\
\widetilde{\mathbf F}&:=((f, \textsf{m}_1)_K, (f, \textsf{m}_2)_K, \cdots, (f, \textsf{m}_{n_{k-2m}})_K)^{\intercal}, \\
\overline{\mathbf F}&:=((f, \textsf{m}_1)_K, (f, \textsf{m}_2)_K, \cdots, (f, \textsf{m}_{n_{m-1}})_K)^{\intercal}.
\end{align*}
Let
\[
\mathbf M:=\begin{pmatrix}
(\textsf{m}_{1}, \textsf{m}_{1})_K & (\textsf{m}_{1}, \textsf{m}_{2})_K & \cdots &  (\textsf{m}_{1}, \textsf{m}_{n_{k}})_K \\
(\textsf{m}_{2}, \textsf{m}_{1})_K & (\textsf{m}_{2}, \textsf{m}_{2})_K & \cdots &  (\textsf{m}_{2}, \textsf{m}_{n_{k}})_K \\
\vdots & \vdots &  & \vdots \\
(\textsf{m}_{n_{m-1}}, \textsf{m}_{1})_K & (\textsf{m}_{n_{m-1}}, \textsf{m}_{2})_K & \cdots &  (\textsf{m}_{n_{m-1}}, \textsf{m}_{n_{k}})_K
\end{pmatrix},
\]
\[
\widetilde{\mathbf M}:=\begin{pmatrix}
(\textsf{m}_{1}, \textsf{m}_{1})_K & (\textsf{m}_{1}, \textsf{m}_{2})_K & \cdots &  (\textsf{m}_{1}, \textsf{m}_{n_{k-2m}})_K \\
(\textsf{m}_{2}, \textsf{m}_{1})_K & (\textsf{m}_{2}, \textsf{m}_{2})_K & \cdots &  (\textsf{m}_{2}, \textsf{m}_{n_{k-2m}})_K \\
\vdots & \vdots &  & \vdots \\
(\textsf{m}_{n_{k-2m}}, \textsf{m}_{1})_K & (\textsf{m}_{n_{k-2m}}, \textsf{m}_{2})_K & \cdots &  (\textsf{m}_{n_{k-2m}}, \textsf{m}_{n_{k-2m}})_K
\end{pmatrix},
\]
\[
\overline{\mathbf M}:=\begin{pmatrix}
(\textsf{m}_{1}, \textsf{m}_{1})_K & (\textsf{m}_{1}, \textsf{m}_{2})_K & \cdots &  (\textsf{m}_{1}, \textsf{m}_{n_{m-1}})_K \\
(\textsf{m}_{2}, \textsf{m}_{1})_K & (\textsf{m}_{2}, \textsf{m}_{2})_K & \cdots &  (\textsf{m}_{2}, \textsf{m}_{n_{m-1}})_K \\
\vdots & \vdots &  & \vdots \\
(\textsf{m}_{n_{m-1}}, \textsf{m}_{1})_K & (\textsf{m}_{n_{m-1}}, \textsf{m}_{2})_K & \cdots &  (\textsf{m}_{n_{m-1}}, \textsf{m}_{n_{m-1}})_K
\end{pmatrix}.
\]
}

\XH{For $m\leq k \leq 2m -1$, it follows from \eqref{eq:PiV2Pcomponent} that
\[
b_i=(f, \Pi^{K}\phi_i)=\sum_{j=1}^{n_k}\pi_{ji}(f, \textsf{m}_j)
\]
Thus we have
\[
\mathbf b=(\mathbf\Pi^K)^{\intercal}\mathbf F.
\]}

\XH{For $k\geq 3m-1$,
since it holds for positive integer $j\leq n_{k-2m}$ that
\[
(Q_{k-2m}^{K}\phi_i, \textsf{m}_j)_K=(\phi_i, \textsf{m}_j)_K=|K|\chi_{N_K-n_{k-2m}+j}(\phi_i)=|K|\delta_{N_K-n_{k-2m}+j,i},
\]
we get
\begin{align*}
&(Q_{k-2m}^{K}\phi_1, Q_{k-2m}^{K}\phi_2,\cdots, Q_{k-2m}^{K}\phi_{N_K})\\
=&|K|(\textsf{m}_1, \textsf{m}_2,\cdots,\textsf{m}_{n_{k-2m}})\widetilde{\mathbf M}^{-1}(\bs O_{n_{k-2m}\times(N_K-n_{k-2m})}, I_{n_{k-2m}\times n_{k-2m}}).
\end{align*}
Hence it follows
\begin{align*}
\mathbf b&=\left((f, Q_{k-2m}^{K}\phi_1)_K, (f, Q_{k-2m}^{K}\phi_2)_K, \cdots, (f, Q_{k-2m}^{K}\phi_{N_K})_K\right)^{\intercal} \\
&=|K|(\bs O_{n_{k-2m}\times(N_K-n_{k-2m})}, \widetilde{\mathbf M}^{-1})^{\intercal}\widetilde{\mathbf F}=|K|\begin{pmatrix}
\bs O_{(N_K-n_{k-2m})\times 1}\\
\widetilde{\mathbf M}^{-1}\widetilde{\mathbf F}\end{pmatrix}.
\end{align*}}

\XH{Now consider the case $2m \leq k \leq 3m - 2$.
Noting that
\[
(\phi_1-\Pi^K\phi_1, \phi_2-\Pi^K\phi_2, \cdots, \phi_{N_K}-\Pi^K\phi_{N_K})=(\phi_1, \phi_2, \cdots, \phi_{N_K})(I-\mathbf D\mathbf\Pi^K),
\]
we obtain
\[
\begin{pmatrix}
(f, Q_{k-2m}^{K}(\phi_1-\Pi^{K}\phi_1)) \\
 \vdots \\
 (f, Q_{k-2m}^{K}(\phi_{N_K}-\Pi^{K}\phi_{N_K}))
\end{pmatrix}=|K|(I-\mathbf D\mathbf\Pi^K)^{\intercal}\begin{pmatrix}
\bs O_{(N_K-n_{k-2m})\times 1}\\
\widetilde{\mathbf M}^{-1}\widetilde{\mathbf F}\end{pmatrix}.
\]
On the other side, we have
\[
(Q_{m-1}^{K}\textsf{m}_1,\cdots,Q_{m-1}^{K}\textsf{m}_{n_k})=(\textsf{m}_1, \cdots, \textsf{m}_{n_{m-1}})\overline{\mathbf M}^{-1}\mathbf M,
\]
and thus
\[
((f, Q_{m-1}^{K}\textsf{m}_1),\cdots,(f, Q_{m-1}^{K}\textsf{m}_{n_k}))=\overline{\mathbf F}^{\intercal}\overline{\mathbf M}^{-1}\mathbf M.
\]
It follows from \eqref{eq:PiV2P} that
\[
\begin{pmatrix}
(f, Q_{m-1}^{K}\Pi^{K}\phi_1) \\
 \vdots \\
 (f, Q_{m-1}^{K}\Pi^{K}\phi_{N_K})
\end{pmatrix}=(\mathbf\Pi^K)^{\intercal}
\begin{pmatrix}
(f, Q_{m-1}^{K}\textsf{m}_1) \\
 \vdots \\
 (f, Q_{m-1}^{K}\textsf{m}_{n_k})
\end{pmatrix}=(\mathbf\Pi^K)^{\intercal}\mathbf M^{\intercal}\overline{\mathbf M}^{-1}\overline{\mathbf F}.
\]
Therefore in this case we achieve
\[
\mathbf b=(\mathbf\Pi^K)^{\intercal}\mathbf M^{\intercal}\overline{\mathbf M}^{-1}\overline{\mathbf F}+|K|(I-\mathbf D\mathbf\Pi^K)^{\intercal}\begin{pmatrix}
\bs O_{(N_K-n_{k-2m})\times 1}\\
\widetilde{\mathbf M}^{-1}\widetilde{\mathbf F}\end{pmatrix}.
\]
}

\section{Conclusion}

Based on a generalized Green's identity for $H^m$ inner product $m>n$,
we present the $H^m$-nonconforming virtual element method of any order $k$ on any shape of polytope in $\mathbb R^n$ with constraints $m>n$ and $k\geq m$ in a universal way
to continue the work in \cite{ChenHuang2019}. We improve the discrete method in \cite{ChenHuang2019} as follows:
\begin{enumerate}[(1)]
\item The stabilization term involves only the boundary degrees of freedom,
whereas all the degrees of freedom are involved in the stabilization term in \cite{ChenHuang2019};
\item For the case $2m \leq k \leq 3m - 2$, we define the right hand side term as $(f, Q_h^{m-1}\Pi_h v_h+Q_h^{k-2m}(v_h-\Pi_h v_h))$, rather than $(f, Q_h^{m-1}v_h)$ in \cite{ChenHuang2019}, as a result of which the modification of the space of shape functions is not required.
\end{enumerate}

%

\bibliographystyle{abbrv}
\bibliography{./paper}
\end{document}